\documentclass[11pt, a4paper]{amsart}
\usepackage[utf8]{inputenc}
\usepackage[all,cmtip]{xy}%nEW
\usepackage{amsmath, amsfonts, amssymb, amsthm}

\usepackage{graphicx}
\usepackage[inline]{enumitem}
\usepackage{hyperref}
\usepackage{multicol}

\usepackage[cal=boondox]{mathalpha}

\theoremstyle{definition}
\newtheorem{definition}{Definition}%[subsection]
\newtheorem{remark}[definition]{Remark}

\theoremstyle{plain}

\newtheorem{lemma}[definition]{Lemma}
\newtheorem{theorem}[definition]{Theorem}
\newtheorem{corollary}[definition]{Corollary}

\usepackage{tikz, float, tikz-cd}
	\usetikzlibrary{arrows}

\usepackage[
    a4paper,
    top=2.5cm,
    bottom=2.5cm,
    left=2.5cm,
    right=2.5cm,
    includeheadfoot,
    headheight=14pt,
    footskip=1cm,
    marginparwidth=3cm,
    marginparsep=0.5cm
]{geometry}

\hypersetup{pdfborder={0 0 0}}

\newcommand{\inert}{\mathrm{inert}}

\newlist{steplist}{enumerate}{1}
\setlist[steplist]{label=\textbf{Step \arabic*}, align=left, wide=1pt}

\newlist{caselist}{enumerate}{1}
\setlist[caselist]{label=\textbf{Case \arabic*}, align=left, wide=1pt}

\newlist{termlist}{enumerate}{1}
\setlist[termlist]{label=\textbf{Term \arabic*}, align=left, wide=1pt}

\author{Benjamin Enriquez}
\address{\scriptsize Institut de Recherche Mathématique Avancée (UMR 7501), University of Strasbourg, 7 rue René Descartes, 67000 Strasbourg, France}
\email{b.enriquez@math.unistra.fr}

\author{Hidekazu Furusho}
\address{\scriptsize Graduate School of Mathematics, Nagoya University, Furo-cho, Chikusa-ku, Nagoya, 464-8602,
Japan}
\email{furusho@math.nagoya-u.ac.jp}

\date{July 10, 2026}

\title[A stabilizer interpretation of the Grothendieck-Teichm\"uller group $\mathsf{GRT}_1(\mathbf k)$]{
A stabilizer interpretation of the Grothendieck-Teichm\"uller group $\mathsf{GRT}_1(\mathbf k)$
}

\begin{document}
\begin{abstract}

If $\mathfrak u$ and $\mathfrak v$ are Lie algebras, then the product $\mathrm{Out}(\mathfrak u)\times \mathrm{Out}(\mathfrak v)$  
of their outer automorphism groups naturally acts on the set of outer Lie algebra morphisms from $\mathfrak u$ to $\mathfrak v$;
the stabilizer of the outer class of a given such morphism is then a subgroup of $\mathrm{Out}(\mathfrak u)\times 
\mathrm{Out}(\mathfrak v)$. We show that this leads to two related interpretation of the Grothendieck-Teichmüller group 
$\mathsf{GRT}_1(\mathbf k)$, where $\mathfrak u,\mathfrak v$ are the Lie algebra of infinitesimal braids on the plane 
(resp. framed infinitesimal braids on the sphere) with 3 and 4 (resp. 4 and 5) strands: namely, it can be expressed as  
the joint intersection of the stabilizer groups of the outer classes of certain strand doubling morphisms $\phi$ and $\psi$
with $\mathrm{Out}^*(\mathfrak u)\times\mathrm{Out}(\mathfrak v)$, where $\mathrm{Out}^*(\mathfrak u)$ is a subgroup of 
$\mathrm{Out}(\mathfrak u)$ of outer classes of inertia-preserving automorphisms of $\mathfrak u$.
% We propose two related interpretations of the Grothendieck-Teichmüller Lie algebra $\mathfrak{grt}_1$, 
% both associated with a pair $\mathfrak u,\mathfrak v$ of infinitesimal braid Lie algebras and a pair of 
% morphisms $\phi,\psi$ relating them. In the first (resp. second) case, $\mathfrak u$ and $\mathfrak v$ are the Lie algebras
% of infinitesimal braids on the plane (resp. framed infinitesimal braids on the 2-sphere) with $3$ and $4$ strands (resp. with $4$ and 
% $5$ strands), and $\phi$ and $\psi$ correspond to doublings of points. Namely, we identify in both cases  $\mathfrak{grt}_1$
% with the set of all outer tangential derivations of $\mathfrak u$, intertwined by both $\phi$ and $\psi$ to some outer 
% derivation of $\mathfrak v$. 
\end{abstract}

\maketitle

    {\footnotesize \tableofcontents}

\subsection{Introduction}\label{sect:intro}

In \cite{Dr}, Drinfeld introduced the related notions of associators and the Gro\-then\-dieck-Teichmüller (GT) $\mathbb Q$-group 
scheme. These notions were later used in many areas of mathematics (see a survey in the Introduction of \cite{W}). 

The GT group functor takes a commutative $\mathbb Q$-algebra with unit $\mathbf k$ to 
$\mathsf{GRT}_1(\mathbf k)$, defined in \cite{Dr} as an automorphism group of the collection of all 
symmetric monoidal categories over $\mathbf k$ which are infinitesimally braided (i.e. equipped with an infinitesimal 
braiding). As $\mathsf{GRT}_1$ is a pro-unipotent group scheme, it can uniquely be recovered from 
its Lie algebra $\mathfrak{grt}_1$. This Lie algebra is presented in \cite{Dr} by an explicit system of
linear conditions, called 2-cycle, hexagon and pentagon relations. 

Equivalent linear systems for the definition of $\mathfrak{grt}_1$ have been given in several works. 
In \cite{F1}, $\mathfrak{grt}_1$ was shown to be equal to the a priori larger set of solutions of the 
pentagon relation. In \cite{FHK}, it was shown to be equal to an a priori even larger set of solutions 
of a linear system. On a different note, an interpretation of $\mathfrak{grt}_1$ in terms of confluence relations
was obtained in \cite{HS,F2}. 

Identifications of $\mathfrak{grt}_1$ with explicit Lie algebras of derivations were also proposed. In \cite{BN}, 
$\mathfrak{grt}_1$ is interpreted as the Lie algebra of derivations of a `universal' infinitesimally braided 
symmetric monoidal category, denoted $\mathbf{PaCD}$ 
(parenthesized chord diagrams), compatible with the cabling operations of this category. The underlying operadic structure was 
later clarified in \cite{Fr}, which enabled the identification of $\mathsf{GRT}_1(\mathbf k)$ with 
an operadic automorphism group. 

An interpretation of $\mathfrak{grt}_1$ in terms of infinitesimal braid Lie algebras was obtained in \cite{I2}. 
There,  %independently of Drinfeld's work, 
a form over $\mathbb Z$ of this Lie algebra is identified with  
the `stable derivation algebra'
of the tower of infinitesimal braid Lie algebras on the sphere $(\mathfrak P_n)_{n\geq3}$. For $n\geq3$, 
$\mathfrak P_n=\mathrm{gr}\pi_1\mathrm{Cf}_n(S^2)$, where $\mathrm{gr}$ is the Lie algebra associated with the lower 
central series filtration of a group, and $\mathrm{Cf}_n(X)$ is the configuration space of $n$ distinct points on a space 
$X$; it contains elements $X_{ij}$ where $i\neq j\in[1,n]$ which correspond to the 
winding of braids $i$ and $j$. One defines $\mathfrak{der}^*(\mathfrak P_n)$ as the set of all derivations of $\mathfrak P_n$
which take each $X_{ij}$ to an element of $[X_{ij},\mathfrak P_n]$; this is a Lie subalgebra of  
$\mathfrak{der}(\mathfrak P_n)$, which also contains the Lie subalgebra $\mathfrak{inn}(\mathfrak P_n)$ of inner derivations 
as an ideal, and one defines 
$\mathfrak{out}^*(\mathfrak P_n)$ as the quotient $\mathfrak{der}^*(\mathfrak P_n)/\mathfrak{inn}(\mathfrak P_n)$, and finally 
$\mathcal D_n:=\mathfrak{out}^*(\mathfrak P_n)^{S_n}$, the action being induced by that of the symmetric group 
 $S_n$ on $\mathfrak P_n$ by permutation of indices. 
 
 There is a natural map $\mathcal D_{n+1}\to \mathcal D_n$, which is constructed as follows. 
Let $i\in[1,n+1]$. The erasing of strand $i$ induces a Lie algebra morphism 
$\mathrm{pr}_i : \mathfrak P_{n+1}\to\mathfrak P_n$. There is a Lie algebra morphism
$p_i : \mathfrak{der}^*(\mathfrak P_{n+1})\to\mathfrak{der}^*(\mathfrak P_n)$, such that 
$\mathrm{pr}_i(D(x))=p_i(D)(\mathrm{pr}_i(x))$ for any $x\in\mathfrak P_{n+1},D\in\mathfrak{der}^*(\mathfrak P_{n+1})$. 
The morphism $p_i$ induces a morphism 
$p_i : \mathfrak{out}^*(\mathfrak P_{n+1})\to\mathfrak{out}^*(\mathfrak P_n)$. 
The restriction of $p_i$ to $\mathcal D_{n+1}$ is independent of $i$ and its image is contained in  
$\mathcal D_n$; this is the map $\mathcal D_{n+1}\to\mathcal D_n$. 
The results of \cite{I2} can then be summarized as follows: 

\begin{theorem} (see \cite{I2})
The map $\mathcal D_{n+1}\to\mathcal D_n$ is an isomorphism if $n\geq5$ and is injective if $n=4$; moreover, 
there is an isomorphism $\mathcal D_5\otimes\mathbb Q\simeq\mathfrak{grt}_1$. 
\end{theorem}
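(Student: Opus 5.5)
Since this is Ihara's theorem, the plan is to rebuild its proof in outline, working over $\mathbb Q$ wherever possible and then descending to the integral statement.

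\textbf{Normalization.} First I normalize elements of $\mathcal D_n$. Take $D\in\mathfrak{der}^*(\mathfrak P_n)$ whose outer class is $S_n$-invariant. Subtracting an inner derivation, I make $D$ vanish on $X_{12}$ and $X_{23}$ (or some similar set of generators). The goal is to show that the normalized $D$ is determined by a single element $f$ in the free Lie algebra on two generators. This uses the short exact sequences
\[
0\to \mathfrak F_{n-1}\to\mathfrak P_{n+1}\to\mathfrak P_n\to0,
\]
coming from the fibrations $\mathrm{Cf}_{n+1}(S^2)\to\mathrm{Cf}_n(S^2)$, together with the fact that $\mathfrak P_4$ is free of rank two ($\mathrm{Cf}_4(S^2)$ is essentially $\mathrm{PGL}_2\times\mathcal M_{0,4}$). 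Because the fibre is free and centerless, an outer derivation of $\mathfrak P_{n+1}$ that preserves inertia and is compatible with the $S_{n+1}$-symmetry is determined by its image under $p_i$ together with its action on the fibre.

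\textbf{Injectivity for $n\ge4$.} If $p_i(D)$ is inner, I first correct $D$ by an inner derivation so that $p_i(D)=0$. Then $D$ maps $\mathfrak P_{n+1}$ into the free fibre ideal $\mathfrak F_{n-1}$. Applying this for all $i$, invariance forces $D$ to take values in the intersection of the kernels of the $\mathrm{pr}_i$. On the other hand, inertia-preservation, $D(X_{ij})\in[X_{ij},\mathfrak P_{n+1}]$, forces $D(X_{ij})$ to lie in the kernel of $\mathrm{pr}_k$ only for $k\in\{i,j\}$. For $n+1\ge5$ one can pick some $k\notin\{i,j\}$, and since centralizers of the $X_{ij}$ are small (they are one-dimensional, by freeness of the fibres), this gives $D(X_{ij})=0$ modulo an inner correction. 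Hence $D$ is inner.

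\textbf{Surjectivity for $n\ge5$ and the identification with $\mathfrak{grt}_1$.} A class in $\mathcal D_5$ gives, by restriction to $\mathfrak P_4$ (i.e. $\mathcal M_{0,4}$, free on $x,y$), an element $f(x,y)$. Invariance under $S_4$ acting on $\mathfrak P_4$ through $S_3$ yields the 2-cycle and hexagon relations. Consistency on $\mathfrak P_5$ (i.e. $\mathcal M_{0,5}$) yields the pentagon relation. This is exactly Drinfeld's system, giving a map $\mathcal D_5\otimes\mathbb Q\to\mathfrak{grt}_1$ that is injective by the previous step. In the converse direction, each $\psi\in\mathfrak{grt}_1$ defines, via Drinfeld's formulas or the operadic action on $\mathbf{PaCD}$, a compatible family of special derivations of all $\mathfrak P_n$ simultaneously. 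This shows surjectivity of $\mathcal D_5\otimes\mathbb Q\to\mathfrak{grt}_1$ and also of $\mathcal D_{n+1}\to\mathcal D_n$ for $n\ge5$ over $\mathbb Q$.

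\textbf{Main obstacle.} The hardest part is the integral surjectivity $\mathcal D_{n+1}\to\mathcal D_n$ for $n\ge5$: lifting a $\mathbb Z$-form derivation requires the pentagon relation to propagate to all $n$. For this I would first try the standard argument that $\mathfrak P_n$ is generated by images of $\mathfrak P_5$ under coface maps, with relations supported on at most five strands, so that the $\mathfrak P_5$-level relations suffice.
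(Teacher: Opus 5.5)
The paper gives no proof of this statement; it is quoted from Ihara. Your outline therefore has to stand on its own, and it has genuine gaps.

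\textbf{The integral statement is not proved.} At best you obtain that $\mathcal D_{n+1}\to\mathcal D_n$ is injective and becomes surjective after $\otimes\mathbb Q$. That only shows the cokernel is torsion. Bijectivity of the $\mathbb Z$-forms for $n\geq5$ is exactly what you relegate to a ``main obstacle'', and the suggested reduction to five-strand relations is not carried out. You would need an integral formula for the lift on all generators of $\mathfrak P_{n+1}$, and then a check of specialness, $S_{n+1}$-invariance modulo inner derivations, and compatibility with the $p_i$.

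\textbf{The rational part is asserted at its key points.} First, that an $S_5$-invariant special lift to $\mathfrak P_5$ forces the pentagon relation is the heart of the matter. It needs a genuine centralizer computation, of the kind carried out in the proof of Thm.~\ref{thm:main} via Lems.~\ref{lem:Ct12} and~\ref{lem:kertt}. Second, matching Ihara's 5-cycle relation in $\mathfrak P_5$ with Drinfeld's pentagon in $\mathfrak t_4$ is a further step (cf.\ the proof of Thm.~\ref{thm:hf}). Third, for $\psi\in\mathfrak{grt}_1$ you must check that the induced derivation of $\mathfrak P_{n+1}=\mathfrak t_n/Z(\mathfrak t_n)$ is special also on the $X_{i,n+1}$. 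It must also be invariant modulo inner derivations under all of $S_{n+1}$, not only under the $S_n$ visible on $\mathfrak t_n$. Finally, injectivity of $\mathcal D_5\otimes\mathbb Q\to\mathfrak{grt}_1$ rests on your injectivity step, which does not work.

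\textbf{The injectivity argument uses a false fact.} Centralizers of the $X_{ij}$ are not one-dimensional in $\mathfrak P_m$ for $m\geq5$: $X_{12}$ commutes with every $X_{kl}$ with $k,l\notin\{1,2\}$. Indeed $\mathrm C_{\mathfrak P_{n+1}}(X_{12})=\langle X_{12},X_{ij}:3\leq i\neq j\leq n+1\rangle$, which is Ihara's result used in Lem.~\ref{lem:Ct12}. Freeness of $\ker\mathrm{pr}_k$ only controls centralizers inside that ideal, and $X_{ij}\notin\ker\mathrm{pr}_k$ when $k\notin\{i,j\}$. Write $D(X_{ij})=[X_{ij},T_{ij}]$. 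From $\mathrm{pr}_k(D(X_{ij}))=0$ you only learn that $\mathrm{pr}_k(T_{ij})$ centralizes $\mathrm{pr}_k(X_{ij})$ in $\mathfrak P_n$, which is a large subalgebra when $n\geq5$. Even for $n=4$, where $\mathfrak P_4$ is free, you only get (in degrees $\geq2$) $T_{ij}\in\bigcap_{k\notin\{i,j\}}\ker\mathrm{pr}_k$. This does not force $D(X_{ij})=0$. For instance, $[[X_{13},X_{14}],X_{15}]$ lies in $\ker\mathrm{pr}_3\cap\ker\mathrm{pr}_4\cap\ker\mathrm{pr}_5$, yet does not commute with $X_{12}$: both lie in $\ker\mathrm{pr}_1$, which is free on $X_{12},X_{13},X_{14}$.

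\textbf{The step before it is also unjustified.} $S_{n+1}$-invariance of the outer class only gives $p_j(D)=\mathrm{ad}_{a_j}$ for each $j$ separately. Making all $p_j(D)$ vanish for one representative needs a common $u$ with $\mathrm{pr}_j(u)=a_j$ for all $j$, which you do not produce.

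\textbf{The normalization is wrong.} $D(X_{12})=D(X_{23})=0$ cannot be achieved in general: on $\mathfrak P_4$, which is free on $X_{12},X_{23}$, it forces $D=0$. The right normal form is $D(X_{12})=0$, $D(X_{23})=[f,X_{23}]$, as for $D_C$ in Lem.~\ref{lem:def:DC}.
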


From the geometric viewpoint, the key role in this result is therefore played by the tower of Lie algebras
$(\mathfrak P_n)_{n\geq4}$ together with the compatible actions of the symmetric groups, and with the 
`erasing of strands' morphisms $\mathrm{pr}_i : \mathfrak P_{n+1}\to \mathfrak P_n$, 
which is part of the operadic structure involved in \cite{Fr}.

The purpose of the present work is to exhibit two related alternative interpretations of the Lie 
algebra $\mathfrak{grt}_1$ in terms of infinitesimal braid Lie algebras. One of these interpretations 
is based on particular morphisms of doubling of strands from the tower of Lie algebra 
$(\mathfrak t_n)_{n\geq0}$, which turns out to be also part of the operadic structure involved in \cite{Fr}.

The first interpretation involves the Lie algebras of infinitesimal braids on the plane
$\mathfrak t_n:=\mathrm{gr}\pi_1\mathrm{Cf}_n(\mathbb R^2)$ ($n\geq 2$), which are related by the morphisms 
$\mathfrak t_n\to \mathfrak t_m$, $x\mapsto x^\phi$ attached to a partially defined map $\phi : [m]\supset S_\phi\to [n]$, 
which are obtained upon taking graded Lie algebras of fundamental groups from the diagram  
$\mathrm{Cf}_n(\mathbb C)\stackrel{\phi}{\to} \overline{\mathrm{Cf}}_m(\mathbb C)\hookleftarrow \mathrm{Cf}_m(\mathbb C)$, 
the bar denoting the ASFM (Axelrod-Singer-Fulton-McPherson) compactification; where the map $\phi : \mathrm{Cf}_n(\mathbb C)\to 
\overline{\mathrm{Cf}}_m(\mathbb C)$ may be obtained as the limit for $t\to 0$ (in the sense of the Introduction to \cite{FM}) 
of the maps 
$\phi_t : \mathrm{Cf}_n(\mathbb C)\to \mathrm{Cf}_m(\mathbb C)$ (well-defined for $|t|$ small enough)
$(x_1,...,x_n)\mapsto (y^t_1,...,y^t_m)$, where $y_i^t:=
x_{\phi(i)}+tn(i)$ for $i\notin S_\phi$
and $y_i^t:=t^{-1}(1+n(i))$ for $i\in S_\phi$. 
where $n(i):=|[1,i-1]\cap\phi^{-1}\phi(i)|$ for $i\notin S_\phi$ and 
$n(i):=|[1,i-1]\cap S_\phi|$ for $i\in S_\phi$.  With $\mathfrak t_3,\mathfrak t_4,x\mapsto x^\phi$ 
as in \S\ref{sect:1:1} and $\mathfrak G,\mathfrak G_\inert$ as in \S\ref{sect:12}, the result is formulated as follows: 

\begin{theorem} \label{thm:2} (see Thm. \ref{thm:main})
The subset $\mathfrak{grt}_1\subset\mathfrak G$ is equal to the subset of $\mathfrak G_\inert$ of all elements  
$C$ such that there exists $D\in\mathfrak{der}(\mathfrak t_4)$ and $X\in\mathfrak t_4$, 
satisfying 

(a) the morphism $\mu_{123}:\mathfrak t_3\to\mathfrak t_4,x\mapsto x^{1,2,3}$ intertwines $D_C$ (see Lem. \ref{lem:def:DC}) 
and $D$, and 

(b) the morphism $\mu_{124}:\mathfrak t_3\to\mathfrak t_4,x\mapsto x^{1,2,4}$ intertwines $D_C$ and $D+\mathrm{ad}_X$. 
\end{theorem}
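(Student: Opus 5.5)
The proof goes by double inclusion. The forward inclusion uses the standard action of $\mathfrak{grt}_1$ on the tower $(\mathfrak t_n)_n$. The reverse inclusion extracts the pentagon equation for $C$ from the existence of $D$ and $X$, and then applies Furusho's theorem \cite{F1} that the pentagon relation, together with the normalization built into $\mathfrak G$, already characterizes $\mathfrak{grt}_1$. Throughout I use the description of $D_C$ from Lem.~\ref{lem:def:DC}. That is, $D_C$ is the derivation of $\mathfrak t_3$ killing $t_{12}$ and the central element, and sending $t_{23}$ to $[C(t_{12},t_{23}),t_{23}]$ (up to the normalization conventions of \S\ref{sect:12}).

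\textbf{Inclusion $\mathfrak{grt}_1\subset$ RHS.} Let $C\in\mathfrak{grt}_1$. First I check that $C\in\mathfrak G_\inert$, which should follow from the 2-cycle and hexagon relations, since these force $D_C$ to be inertia-preserving. Next I take for $D$ the standard derivation of $\mathfrak t_4$ attached to $C$ by Drinfeld's action of $\mathfrak{grt}_1$ on $\mathfrak t_4$. It is defined on generators by $D(t_{ij})=[C^{\bullet},t_{ij}]$, where $C^{\bullet}$ is a suitable sum of cabled copies $C^{a,b,c}$ of $C$. It is well defined because $C$ satisfies the hexagon and pentagon relations.

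I then verify (a) and (b) on the generators $t_{12},t_{23}$ of $\mathfrak t_3$ (the center is handled automatically).
\begin{itemize}
\item For (a), the identity $\mu_{123}\circ D_C=D\circ\mu_{123}$ is the compatibility of Drinfeld's action with the cabling $x\mapsto x^{1,2,3}$.
\item For (b), the cabling $x\mapsto x^{1,2,4}$ is compatible with the action only up to conjugation. Here $X$ is an explicit combination of cabled copies of $C$, of the shape $X=C^{1,2,3}$ or $C^{12,3,4}-C^{1,2,3}$ up to sign. The pentagon identity $C^{1,2,34}+C^{12,3,4}=C^{2,3,4}+C^{1,23,4}+C^{1,2,3}$ is exactly what makes the two sides agree.
\end{itemize}

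\textbf{Inclusion RHS $\subset\mathfrak{grt}_1$.} Let $C\in\mathfrak G_\inert$, and let $D,X$ satisfy (a) and (b).
\begin{enumerate}
\item Evaluate (a) and (b) on $t_{12}$. Since $D_C(t_{12})=0$, this gives $D(t_{12}^{1,2,3})=0$ and $D(t_{12}^{1,2,4})=-[X,t_{12}^{1,2,4}]$.
\item Evaluate (a) and (b) on $t_{23}$. This gives $D(t_{23}^{1,2,3})$ and $D(t_{23}^{1,2,4})$ as explicit brackets involving $C$ and $X$.
\item Use the images of $\mu_{123}$ and $\mu_{124}$ together with the symmetry of $\mathfrak t_4$. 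Their images generate (a finite-index part of) $\mathfrak t_4$, so these values determine $D$ on all $t_{ij}$. The relations of $\mathfrak t_4$ (infinitesimal braid relations), applied to $D$, then give linear equations on $C$ and $X$.
\item Modify $D$ by an inner derivation so that $D(t_{12})=0$. Then use the centralizer results in $\mathfrak t_4$ to determine $X$ modulo the centralizer of the relevant element. These results are that the centralizer of $t_{ij}$ is spanned by $t_{ij}$ and the center, and that of $t_{12}+t_{13}+t_{23}$ by $\mathfrak t_3^{1,2,3}$ plus the center.
\item Substitute back to obtain the pentagon equation for $C$. The inertia condition defining $\mathfrak G_\inert$ is needed precisely to kill the residual degree-one and central ambiguities, which would otherwise allow spurious solutions.
\item Finally, invoke \cite{F1} to conclude $C\in\mathfrak{grt}_1$.
\end{enumerate}

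\textbf{Main obstacle.} I expect the main difficulty to be the reverse inclusion, and specifically steps 3--4: showing that the existence of $D$ and $X$ forces $D$ to be the standard derivation up to an inner derivation. This needs the centralizer computations in $\mathfrak t_4$, which I would prove using the semidirect decomposition $\mathfrak t_4=\mathfrak f_3\rtimes\mathfrak t_3$ and the fact that centralizers in free Lie algebras are one-dimensional. I would also need to keep careful track of the central elements so that the correction terms cancel to exactly the pentagon relation, not a twisted form of it.
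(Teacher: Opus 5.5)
Your overall architecture matches the paper's. The forward inclusion uses the explicit derivation of $\mathfrak t_4$ attached to $C\in\mathfrak{grt}_1$ (Lem.~\ref{lem:act:grt}), and the reverse inclusion extracts the pentagon and invokes Furusho's theorem. However, the reverse inclusion has a genuine gap at exactly the step you flag.

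\textbf{The centralizer claim is false.} The statement you intend to use does not hold in $\mathfrak t_4$: the centralizer of $t_{12}$ is not $\mathbf k t_{12}+\mathbf k z_4$ but $\mathbf k t_{12}+\mathfrak t_3^{12,3,4}$ (Lem.~\ref{lem:Ct12}). It contains $t_{34}$, $t_{13}+t_{23}$, $t_{14}+t_{24}$ and the infinite-dimensional subalgebra they generate.
\begin{itemize}
\item Evaluating (a) and (b) on $t_{12}$ gives $[X,t_{12}]=0$ directly. No inner modification of $D$ is needed, and such a modification would generally break (a).
\item With your version of the centralizer, $[X,t_{12}]=0$ would force $X=0$ in degrees $>1$. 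The correct conclusion is $X=\lambda^{12,3,4}$ with $\lambda\in\mathfrak t_3$ unknown, and the argument later shows $\lambda=C(t_{21},t_{13})$, which is nonzero when $C\neq0$. So your route would ``prove'' that only $C=0$ qualifies, contradicting the forward inclusion.
\item The same constraint $[X,t_{12}]=0$ rules out your forward-direction guesses $X=C^{1,2,3}$ and $X=C^{12,3,4}-C^{1,2,3}$. The right $X$ is the cabled copy $C(t_{12,3},t_{12,4})$, which is $\pm C(t_{4,12},t_{12,3})$ in the paper's notation. With it, (b) is a direct check on generators. The pentagon enters earlier, in showing that the formulas for $D$ respect $[t_{13},t_{24}]=[t_{14},t_{23}]=0$.
\end{itemize}

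\textbf{The mechanism producing the pentagon is missing.} Your plan has no step that actually yields the pentagon; \emph{substitute back} hides the whole argument. In the paper it goes as follows.
\begin{itemize}
\item Conditions (a) and (b) evaluated on $t_{13},t_{23}$ fix $D(t_{13}),D(t_{23}),D(t_{14}),D(t_{24})$.
\item The single relation $[t_{14},t_{23}]=0$ then yields $[t_{14},[t_{23},C(t_{32},t_{21})-h_C(t_{42},t_{21})+X]]=0$.
\item The key lemma is $\mathrm{ker}(x\mapsto[t_{14},[t_{23},x]])=\mathfrak t_3^{1,23,4}+\mathfrak t_3^{14,2,3}$ (Lem.~\ref{lem:kertt}), proved by applying the correct centralizer lemma twice together with strand erasing. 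It gives $C(t_{32},t_{21})-h_C(t_{42},t_{21})=\gamma^{23,1,4}+\delta^{14,2,3}-\lambda^{12,3,4}$.
\item Applying the strand-erasing maps $x\mapsto x^{1,2,3,\emptyset}$, $x^{\emptyset,1,2,3}$, $x^{2,1,\emptyset,3}$, $x^{2,\emptyset,1,3}$ determines $\delta,\lambda,\gamma$ in terms of $C$. It also forces $h_C(t_{12},t_{23})=-C(t_{23},t_{31})$.
\item Substituting these back gives the pentagon.
\end{itemize}
The centralizer of $t_{12}+t_{13}+t_{23}$ plays no role in this.

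\textbf{The final step needs the inertia condition in a specific way.} Thm.~\ref{thm:hf} only gives $\mathrm{ker}(\mathrm{pent}\vert_{\mathfrak G})=\mathfrak{grt}_1+\mathbf k[e_0,e_1]$, so the normalization built into $\mathfrak G$ is not enough. The inertia condition is what excludes the degree-$2$ element $[e_0,e_1]$. There are no degree-one or central ambiguities in $\mathfrak G$ for it to remove.
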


The second interpretation  is the analogue of the first one, where the plane is replaced by the 2-sphere. 
The involved Lie algebras are then those of {\it framed} infinitesimal braids on the 2-sphere
$\mathfrak P_{\vec n}:=\mathrm{gr}\pi_1\mathrm{Cf}_{\vec n}(S^2)$ ($n\geq 3$), where $\mathrm{Cf}_{\vec n}(S^2)$
is the set of $n$-uples $((x_1,v_1),...,(x_n,v_n))$ where $(x_1,...,x_n)\in \mathrm{Cf}_{n}(S^2)$ 
and $v_i\in T_{x_i}S^2\smallsetminus0$. The morphisms $\mathfrak P_{\vec n}\to \mathfrak P_{\vec m}$, $x\mapsto x^\phi$ 
are then attached to partially defined maps $\phi : [m]\supset S_\phi\to [n]$ and are obtained from the diagram 
$\mathrm{Cf}_{\vec n}(S^2)\stackrel{\phi}{\to} \overline{\mathrm{Cf}}_{\vec m}(S^2)\hookleftarrow \mathrm{Cf}_{\vec m}(S^2)$, 
where $\overline{\mathrm{Cf}}_{\vec m}(S^2)$ is the fibered product 
$\overline{\mathrm{Cf}}_{m}(S^2)\times_{(S^2)^m}(T^\times S^2)^m$, where 
$T^\times S^2:=\{(x,v)|x\in S^2,v\in T_xS^2\smallsetminus 0\}$ and the map 
$\phi : \mathrm{Cf}_{\vec n}(S^2)\to\overline{\mathrm{Cf}}_{\vec m}(S^2)$ is the limit for 
$t\to 0$ of the maps (well-defined for $|t|$ small enough)
$\mathrm{Cf}_{\vec n}(S^2)\to\mathrm{Cf}_{\vec m}(S^2)$
given by $(\mathbf x_1,...,\mathbf x_n)\mapsto (\mathbf y^t_1,...,\mathbf y^t_m)$, where 
$\mathbf y_i^t:=T\mathrm{exp}_{\mathbf x_{\phi(i)}}(tn(i)v_i,v_i)$, with $n(i):=|[1,i-1]\cap\phi^{-1}\phi(i)|$, 
the exponential map being relative to the round Riemannian metric on $S^2$
(where the exponential map attached to a Riemannian manifold $(M,g)$ and $x\in M$ is denoted 
$T_xM\supset U_x\stackrel{\mathrm{exp}_x}{\to} M$, and its tangent map by 
$T\mathrm{exp}_x : TU_x\to TM$).  With $\mathfrak P_{\vec 4},\mathfrak P_{\vec 5}$,  
$x\mapsto x^\phi$ as in §\ref{sec:2:1}, the result is formulated as follows: 

\begin{theorem} \label{thm:3}(see Thm. \ref{thm:main:2})
The subset $\mathfrak{grt}_1$ of $\mathfrak G$ 
is equal to the set of all elements $C\in\mathfrak G_\inert$, such that there exists a derivation 
$\vec D$ of $\mathfrak P_{\vec 5}$ and $\vec X\in\mathfrak P_{\vec 5}$, such that: 

\emph{\((a')\)} 
the morphism $\mathfrak P_{\vec 4}\to\mathfrak P_{\vec 5}$, $x\mapsto x^{1,2,3,45}$ intertwines $\vec D_C$ and $\vec D$, and 

\emph{\((b')\)} 
the morphism $\mathfrak P_{\vec 4}\to\mathfrak P_{\vec 5}$, $x\mapsto x^{1,2,4,35}$ intertwines $\vec D_C$ and 
$\vec D+\mathrm{ad}_{\vec X}$.
\end{theorem}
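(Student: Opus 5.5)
The plan is to deduce Theorem \ref{thm:3} from Theorem \ref{thm:2}, using the standard relation between framed sphere braids and plane braids. Recall $\mathfrak P_{\vec n}\simeq \mathfrak P_n\oplus\mathbf k^n$: the framing generators $t_{ii}$ are central. Recall also that $\mathfrak P_{n+1}\simeq\mathfrak t_n/(\text{center})$, obtained by placing the $(n+1)$-st point at infinity. Under these identifications, the morphism $x\mapsto x^{1,2,3,45}$ from $\mathfrak P_{\vec 4}$ to $\mathfrak P_{\vec 5}$ corresponds, modulo centers, to $\mu_{123}:\mathfrak t_3\to\mathfrak t_4$ with the fourth (doubled) strand sent to infinity. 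Likewise $x\mapsto x^{1,2,4,35}$ corresponds to $\mu_{124}$ up to a permutation of indices. So I first set up a dictionary: $\mathfrak t_{n}\simeq \mathfrak P_{n+1}\oplus\mathbf k\,t_{\mathrm{center}}$, and $\mathfrak P_{\vec{n+1}}\simeq \mathfrak t_n\oplus(\text{central framing part})$. I will check that the doubling morphisms are compatible with this dictionary, including the way the framing generators map under strand doubling.

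Second, I relate $\vec D_C$ to $D_C$. For $C\in\mathfrak G_\inert$, $\vec D_C$ should be the derivation of $\mathfrak P_{\vec 4}$ induced by $D_C$ through the dictionary, acting by zero (or by its prescribed scalar) on the central framing parts. This is presumably how the paper defines it, analogously to Lemma \ref{lem:def:DC}.

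\emph{The inclusion $\mathfrak{grt}_1\subset$ RHS.} Take the $D,X$ given by Theorem \ref{thm:2}. Since $D$ is a derivation of $\mathfrak t_4$, it preserves the center up to a scalar, so it extends to a derivation $\vec D$ of $\mathfrak P_{\vec 5}=\mathfrak t_4\oplus\text{center}$ by zero on the framing generators. Set $\vec X=X$. Conditions (a'), (b') then follow from (a), (b), after checking that on the framing generators $t_{ii}$ both sides vanish or agree. The images of the $t_{ii}$ are sums of $t_{jj}$ and $t_{45}$-type elements, which are tracked explicitly.

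\emph{The converse inclusion.} Given $\vec D,\vec X$ satisfying (a') and (b'), I project $\mathfrak P_{\vec 5}$ onto $\mathfrak P_5\simeq \mathfrak t_4/Z$. The derivation $\vec D$ descends, because central elements of $\mathfrak P_{\vec5}$ span the framing part and any derivation maps the center to the center. I then lift to $\mathfrak t_4$ by choosing any linear lift on generators. The relations of $\mathfrak t_4$ differ from those of $\mathfrak t_4/Z$ only by central terms, so the lift is a derivation up to adjusting by a linear map into the center. I fix this using the grading: in degree $\geq2$ the center of $\mathfrak t_4$ is zero, and in degree 1 adjustments are harmless. Once a genuine $D\in\mathfrak{der}(\mathfrak t_4)$ is obtained, (a) and (b) hold modulo center. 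The central discrepancy is then absorbed, since $\mu_{123}(\mathfrak t_3)$ contains the center and $D_C$ kills the center. The conclusion follows from Theorem \ref{thm:2}.

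The main obstacle is this lifting step: passing from the framed sphere algebra back to $\mathfrak t_4$ while making the discrepancies in (a), (b) vanish exactly rather than modulo the center. I would first try to show that any derivation of $\mathfrak t_4/Z$ lifts uniquely to a degree-preserving derivation of $\mathfrak t_4$ killing $Z$, via the splitting $\mathfrak t_4\simeq\mathfrak t_4/Z\oplus Z$. Once this is established, the intertwining identities can be compared directly on generators.
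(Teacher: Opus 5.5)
Your strategy is essentially the paper's. You reduce to Thm.~\ref{thm:main} by transporting positive-degree derivations along the ``point at infinity'' projections $\underline\pi_3:\mathfrak P_{\vec 4}\to\mathfrak t_3$ and $\underline\pi_4:\mathfrak P_{\vec 5}\to\mathfrak t_4$. In one direction you extend $D,X$ through the section $\underline\tau^0$ ($t_{ij}\mapsto\vec X_{ij}$); in the other you push $\vec D,\vec X$ down.

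Your detour through $\mathfrak P_5=\mathfrak t_4/Z(\mathfrak t_4)$ in the converse yields the same derivation as the paper's $D=\underline\pi_4\vec D\sigma$ ($\sigma$ a graded section of $\underline\pi_4$). The lifting step you call the main obstacle is just Lem.~\ref{lem:corresp}, which the paper uses in exactly this form for $\pi_4:\mathfrak t_4\to\mathfrak P_5$ in Cor.~\ref{cor:main}. Once $\vec D,\vec X$ are truncated to the degree support of $C$ (say this explicitly), the lift is forced and is automatically a derivation, because $\pi_4$ is bijective in degrees $\geq 2$.

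Two justifications need correcting.

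\textbf{The central discrepancy.} The discrepancy $D\mu_{123}-\mu_{123}D_C$ does not vanish because ``$\mu_{123}(\mathfrak t_3)$ contains the center''. That premise is false: $z_4=t_{12}+t_{13}+t_{23}+t_{14}+t_{24}+t_{34}$ is not in $\langle t_{12},t_{13},t_{23}\rangle$. The discrepancy vanishes because it raises degree by at least $1$. It therefore takes values in $\mathfrak t_4[\geq 2]$, which meets $Z(\mathfrak t_4)=\mathbf k z_4$ trivially.

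\textbf{``Corresponds modulo centers''.} This cannot refer to maps induced on quotients by centers, because the doubling maps do not preserve centers. For instance $\vec X_{44}\mapsto\vec X_{44}+\vec X_{55}+2\vec X_{45}$, and $z_3\mapsto t_{12}+t_{13}+t_{23}$, which is not central in $\mathfrak t_4$. What you need are the commutative squares of Lem.~\ref{lem:friday}(b):
\begin{itemize}
\item $\underline\pi_4(x^{1,2,3,45})=\underline\pi_3(x)^{1,2,3}$,
\item $\underline\pi_4(x^{1,2,4,35})=\underline\pi_3(x)^{1,2,4}$.
\end{itemize}
Here $\underline\pi_3$ sends $\vec X_{44}$ to $2z_3$, not to $0$. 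Combine these squares with $\underline\pi_3\vec D_C=D_C\underline\pi_3$ (Lem.~\ref{lem:inter:DC}) and the surjectivity of $\underline\pi_3$. This gives (a) and (b) modulo $\mathbf k z_4$, and the degree argument above finishes.

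The same squares and degree argument settle your forward direction in one line. With $\vec D:=\underline\tau^0 D\underline\pi_4$, so that $\underline\pi_4\vec D=D\underline\pi_4$, the map $\underline\pi_4$ kills $\vec D\vec\mu_{123}-\vec\mu_{123}\vec D_C$. Its values lie in degrees $\geq 2$, where $\underline\pi_4$ is injective, so it is zero. The same holds for $\vec\mu_{124}$ with $\vec D+\mathrm{ad}_{\vec X}$. Hence neither your case-by-case check on framing generators nor the paper's covering of $\mathfrak P_{\vec 4}$ by the images $\underline\sigma^A(\mathfrak t_3)$ is actually needed.
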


Thm. \ref{thm:2} leads to following interpretation of $\mathsf{GRT}_1(\mathbf k)$ in terms of stabilizers.  
Let $\mathbf k$ be a commutative $\mathbb Q$-algebra with unit.
For a positively graded complete $\mathbb Q$-Lie algebra $\mathfrak a$, 
generated in degree $1$ and whose degree 1 component
is finite dimensional, define $\mathrm{Aut}_1(\mathfrak a\hat\otimes \mathbf k)$
as the group of $\mathbf k$-linear Lie algebra automorphisms of  $\mathfrak a\hat\otimes \mathbf k$, 
filtered and with associated graded the identity. The assignment $g\mapsto \mathrm{Ad}_g$ is a group morphism  
$\mathrm{Ad} : \mathrm{exp}(\mathfrak a\hat\otimes \mathbf k)\to\mathrm{Aut}_1(\mathfrak a\hat\otimes \mathbf k)$, 
whose image is a normal subgroup. The corresponding quotient is then denoted  by 
$\mathrm{Out}_1(\mathfrak a\hat\otimes \mathbf k):=\mathrm{Aut}_1(\mathfrak a\hat\otimes \mathbf k)/\mathrm{Ad}(\mathrm{exp}(\mathfrak a\hat\otimes \mathbf k))$. 

For a Lie algebra $\mathfrak b$  with the same properties as $\mathfrak a$, 
define $\mathrm{Hom}(\mathfrak a,\mathfrak b)(\mathbf k)$ as the set 
of $\mathbf k$-linear morphisms of complete Lie algebras $\mathfrak a\hat\otimes\mathbf k\to \mathfrak b\hat\otimes\mathbf k$, and by 
$\mathrm{OutHom}(\mathfrak a,\mathfrak b)(\mathbf k)$ the quotient set 
$\mathrm{Hom}(\mathfrak a,\mathfrak b)(\mathbf k)/\mathrm{exp}(\mathfrak b\hat\otimes\mathbf k)$, where 
the group $\mathrm{exp}(\mathfrak b\hat\otimes\mathbf k)$ acts on 
$\mathrm{Hom}(\mathfrak a,\mathfrak b)(\mathbf k)$ by post-composition.
The group $\mathrm{Aut}_1(\mathfrak a\hat\otimes\mathbf k)\times\mathrm{Aut}_1(\mathfrak b\hat\otimes\mathbf k)$
acts on $\mathrm{Hom}(\mathfrak a,\mathfrak b)(\mathbf k)$ by combining pre- and post-composition. This 
induces an action of $\mathrm{Out}_1(\mathfrak a\hat\otimes \mathbf k)\times \mathrm{Out}_1(\mathfrak b\hat\otimes \mathbf k)$
on $\mathrm{OutHom}(\mathfrak a,\mathfrak b)(\mathbf k)$. 

For $\mu\in \mathrm{Hom}(\mathfrak a,\mathfrak b)(\mathbb Q)$, let $\overline\mu$ be its image in 
$\mathrm{OutHom}(\mathfrak a,\mathfrak b)(\mathbb Q)$, and also by $\overline\mu$ the image of the latter element in 
$\mathrm{OutHom}(\mathfrak a,\mathfrak b)(\mathbf k)$ for any $\mathbf k$. 

In Lem. \ref{lem:24}, we define a morphism $\mathfrak G_\inert\to \mathfrak{out}_1(\mathfrak t_3)$, 
which leads to a group morphism $\mathrm{exp}(\hat{\mathfrak G}_\inert\hat\otimes\mathbf k)
\to\mathrm{Out}_1(\hat{\mathfrak t}_3\hat\otimes\mathbf k)$. 

\begin{theorem}\label{thm:4} (see Cor. \ref{thm:gp:char})
One has 
\begin{equation}%\label{main:int}
\mathsf{GRT}_1(\mathbf k)=\mathrm{Stab}_{\mathrm{exp}(\hat{\mathfrak G}_\inert\hat\otimes\mathbf k)\times 
\mathrm{Out}_1(\hat{\mathfrak t}_4\hat\otimes\mathbf k)}(\overline\mu_{123},\overline\mu_{124}) , 
\end{equation}
 the right-hand side being the stabilizer group relative to the diagonal of action of the subgroup 
$\mathrm{exp}(\hat{\mathfrak G}_\inert\hat\otimes\mathbf k)\times 
\mathrm{Out}_1(\hat{\mathfrak t}_4\hat\otimes\mathbf k)$ of 
$\mathrm{Out}_1(\hat{\mathfrak t}_4\hat\otimes\mathbf k)\times 
\mathrm{Out}_1(\hat{\mathfrak t}_4\hat\otimes\mathbf k)$ on the Cartesian square of 
$\mathrm{OutHom}(\mathfrak t_3,\mathfrak t_4)(\mathbf k)$.
\end{theorem}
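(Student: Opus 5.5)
Theorem \ref{thm:4} is the group-level version of Thm. \ref{thm:2}. The plan is to pass from groups to Lie algebras by exponentiation, using pro-unipotence of everything involved. I will treat the subgroup $\mathsf{GRT}_1(\mathbf k)$ of $\mathrm{exp}(\hat{\mathfrak G}\hat\otimes\mathbf k)$ as $\mathrm{exp}(\hat{\mathfrak{grt}}_1\hat\otimes\mathbf k)$. The first check is that it lies in $\mathrm{exp}(\hat{\mathfrak G}_\inert\hat\otimes\mathbf k)$; this follows from $\mathfrak{grt}_1\subset\mathfrak G_\inert$, which is part of the content of Thm. \ref{thm:2}. The morphism $\mathrm{exp}(\hat{\mathfrak G}_\inert\hat\otimes\mathbf k)\to\mathrm{Out}_1(\hat{\mathfrak t}_3\hat\otimes\mathbf k)$ is the exponential of the Lie algebra map $C\mapsto \overline{D_C}$ of Lem. \ref{lem:24}, so it makes sense to speak of the action of this subgroup.

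\textbf{Step 1: the stabilizer is pro-unipotent.} The stabilizer $S$ is a subgroup functor of a pro-unipotent group $\mathrm{exp}(\hat{\mathfrak G}_\inert)\times\mathrm{Out}_1(\hat{\mathfrak t}_4)$. The group $\mathrm{Out}_1$ is pro-unipotent because $\mathrm{Aut}_1$ is pro-unipotent, with Lie algebra the positive-degree derivations, and we quotient by a normal pro-unipotent subgroup. The action on $\mathrm{OutHom}(\mathfrak t_3,\mathfrak t_4)$ is compatible with the degree filtration. The stabilizer of the pair $(\overline\mu_{123},\overline\mu_{124})$ is defined by polynomial equations in each truncation. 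I will argue, working degree by degree, that $S=\mathrm{exp}(\mathfrak s)$. Here $\mathfrak s$ is the infinitesimal stabilizer: the set of pairs $(C,\overline D)$ such that there are $D\in\mathfrak{der}(\mathfrak t_4)$ representing $\overline D$ and $Y,Y'\in\mathfrak t_4$ with
\[
D\circ\mu_{123}-\mu_{123}\circ D_C=\mathrm{ad}_Y\circ\mu_{123}
\]
and the analogous identity for $\mu_{124}$ with $Y'$.

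The degree-by-degree argument is the following. Take $g=\mathrm{exp}(\xi)$ in $S$. Suppose the truncation of $\xi$ to degree $<k$ already lies in $\mathfrak s$. Write $g=\mathrm{exp}(\xi_{<k})\,\mathrm{exp}(\eta)$ by BCH, with $\eta$ of degree $\geq k$. The leading term of $\eta$ then satisfies the linear stabilizer equations, because the nonlinear corrections are of higher degree. This step uses that $\mathfrak s$ is a Lie subalgebra, so that $\mathrm{exp}(\mathfrak s)$ is a group. The converse inclusion $\mathrm{exp}(\mathfrak s)\subset S$ is standard: an element fixed infinitesimally generates a one-parameter subgroup fixing the point. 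To see this, note that $t\mapsto\mathrm{exp}(t\xi)\cdot\overline\mu$ satisfies a linear ODE in each truncation, and $\overline\mu$ is a stationary solution.

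\textbf{Step 2: translate to Thm. \ref{thm:2}.} Replace $D$ by $D-\mathrm{ad}_Y$. Then $\mu_{123}$ intertwines $D_C$ with $D$ on the nose. For $\mu_{124}$, the defect becomes $\mathrm{ad}_{Y'-Y}$, so with $X:=Y'-Y$ we recover conditions (a) and (b) of Thm. \ref{thm:2}. Conversely, (a) and (b) give an element of $\mathfrak s$. Hence the projection of $\mathfrak s$ to $\hat{\mathfrak G}_\inert$ equals $\hat{\mathfrak{grt}}_1\hat\otimes\mathbf k$. To pass to $\mathbf k$ coefficients, I use that the conditions are linear and graded with finite-dimensional components, so they commute with $\otimes\mathbf k$.

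\textbf{Step 3: the identification of $\mathfrak s$ with $\mathfrak{grt}_1$.} The statement identifies $\mathsf{GRT}_1(\mathbf k)$ with the stabilizer, which is a subgroup of a product. So I must show that the projection $\mathfrak s\to\hat{\mathfrak G}_\inert$ is injective. That is, $(0,\overline D)\in\mathfrak s$ should force $\overline D=0$ in $\mathfrak{out}(\mathfrak t_4)$. The alternative reading is that the identification is via this projection and Cor. \ref{thm:gp:char} intends exactly that. If injectivity is needed, the condition reads: $D\circ\mu_{123}=\mathrm{ad}_Y\circ\mu_{123}$ and likewise for $\mu_{124}$. Then $D-\mathrm{ad}_Y$ vanishes on the image of $\mu_{123}$. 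Together with the second condition, $D$ is inner on the subalgebra generated by the images of $\mu_{123}$ and $\mu_{124}$, up to a discrepancy $\mathrm{ad}_{Y'-Y}$ centralizing the image of $\mu_{123}$. I expect the images to generate $\mathfrak t_4$ and the relevant centralizer to be the center of $\mathfrak t_4$. Granting both, $D$ is inner modulo a central derivation, which vanishes in positive degree.

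I expect this step, the injectivity and the centralizer computation in $\mathfrak t_4$, to be the main obstacle. Its alternative is the exact way Cor. \ref{thm:gp:char} phrases the identification via the projection. I would first try to establish it using the explicit generators $t_{ij}$ together with the known centralizer results for $\mathfrak t_n$.
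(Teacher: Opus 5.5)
Your Steps 1 and 2 follow the paper's route, namely Lem. \ref{lem:31} and the first half of the proof of Thm. \ref{thm:la:char}. For Step 1, the paper avoids your BCH induction by noting that $\beta\mu=\mu\alpha$ is equivalent to $\log\beta\circ\mu=\mu\circ\log\alpha$. In Step 2 the sign should be $X=Y-Y'$, which is harmless.

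The genuine gap is Step 3, and it is not optional. The right-hand side is a subgroup of $\mathrm{exp}(\hat{\mathfrak G}_\inert\hat\otimes\mathbf k)\times\mathrm{Out}_1(\hat{\mathfrak t}_4\hat\otimes\mathbf k)$, and Steps 1--2 only show that its image in the first factor is $\mathsf{GRT}_1(\mathbf k)$. Under any reading, including your ``alternative'' one, identifying the two requires that $(0,\overline D)\in\mathfrak s$ force $\overline D=0$ in $\mathfrak{out}_1(\mathfrak t_4)$. Otherwise the stabilizer is an extension of $\mathsf{GRT}_1(\mathbf k)$ by that kernel.

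The heuristic you sketch for this step would not work.
\begin{itemize}
\item The images of $\mu_{123}$ and $\mu_{124}$ do not generate $\mathfrak t_4$. They generate the subalgebra spanned by $t_{12},t_{13},t_{23},t_{14},t_{24}$, which misses $t_{34}$ in degree $1$. The paper recovers $E(t_{34})$ from $E(z_4)\in Z(\mathfrak t_4)=\mathbf kz_4$, which vanishes for degree reasons.
\item More seriously, after normalizing, kernel elements are pairs $(E,X)$ with $E\mu_{123}=0$ and $(E+\mathrm{ad}_X)\mu_{124}=0$. A derivation that vanishes on $\mathrm{im}\,\mu_{123}$ and equals $-\mathrm{ad}_X$ on $\mathrm{im}\,\mu_{124}$ is not thereby inner. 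Nor does $X$ have any reason to centralize $\mathrm{im}\,\mu_{123}$.
\end{itemize}

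What is needed is to exploit the relations linking the two subalgebras. Computing $E(t_{12})$ both ways gives $[X,t_{12}]=0$. Applying $E$ to $[t_{14},t_{23}]=0$ and $[t_{13},t_{24}]=0$ gives $[t_{23},[t_{14},X]]=0=[t_{24},[t_{13},X]]$.

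The missing key inputs are Lem. \ref{lem:kertt}, which says the kernel of $x\mapsto[t_{14},[t_{23},x]]$ is $\mathfrak t_3^{1,23,4}+\mathfrak t_3^{14,2,3}$, and its conjugate under $3\leftrightarrow 4$, together with Lem. \ref{lem:Ct12}. Intersecting the three resulting descriptions of $X$ and applying strand-deletion maps $\mathfrak t_4\to\mathfrak t_3$ forces $X=0$ in degrees $\geq 2$, and hence $E=0$.

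In degree $1$, your expectation that the relevant centralizer is $Z(\mathfrak t_4)$ also fails. Both $t_{12}+t_{13}+t_{23}$ and $t_{14}+t_{24}+t_{34}$ centralize $\mathrm{im}\,\mu_{123}$. There one only gets, by explicit computation, that $E$ is inner, which is what suffices.
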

Thm. \ref{thm:3} leads to the analogous statement (Cor. \ref{thm:gp:char:sphere}), the Lie algebras 
$\mathfrak t_n$ being replaced by 
$\mathfrak P_{\vec{n+1}}$. 

This paper is organized as follows. \S\ref{sect:1} is devoted to the proof of Thm. \ref{thm:main}; \S\ref{sec:2}
is devoted to the proof of Thm. \ref{thm:main:2}. In \S\ref{sec:3},  we study the stabilizer groups and Lie algebras 
associated to an outer morphism of Lie algebras. In \S\ref{sec:4}, we use this material to formulate and prove Thm. \ref{thm:4}.
Throughout the paper, $\mathbf k$ is a commutative $\mathbb Q$-algebra with unit.

% we show that the identifications of $\mathfrak{grt}_1$ 
% from Thm. \ref{thm:main} enable one to express its Lie algebra structure 
% in terms of stabilizers of Lie algebra morphisms, a notion introduced in
% Lem. \ref{lem:stab:las}. 

\bigskip 

{\bf Acknowledgements.} The work of H. Furusho was partially supported the following grants: 
%ANR project HighAGT ANR20-CE40-0016 (???), 
JSPS KAKENHI JP24K00520, JP24K21510. Both authors also thank 
Anton Alekseev, Nikita Markarian, Toyo Taniguchi and Muze Ren for discussions.

\section{Characterization of $\mathfrak{grt}_1$ via Lie algebras of infinitesimal braids on the plane}\label{sect:1}

This section is mainly devoted to the proof of Thm. \ref{thm:main}. We first introduce its ingredients: the family of infinitesimal 
braid Lie algebras $(\mathfrak t_n)_{n\geq2}$ and the morphisms relating them (\S\ref{sect:1:1}); the Lie algebra 
$\mathfrak{grt}_1$ (\S\ref{sect:12}). \S\ref{sect:aux} contains some preliminary results to \S\ref{sect:1:4}, where 
we formulate and prove Thm. \ref{thm:main}. A variant of this result is proven in \S\ref{sect:variant}. 

\subsection{Lie algebras of infinitesimal braids on the plane}\label{sect:1:1}

For $n\geq2$, let $\mathfrak t_n$ be the Lie algebra with generators $t_{ij}$ with $i\neq j\in[1,n]$, 
subject to relations $t_{ji}=t_{ij}$ for $i\neq j$, $[t_{ij}+t_{ik},t_{jk}]=0$ for any 
distinct $i,j,k$, and $[t_{ij},t_{kl}]=0$ for any distinct $i,j,k,l$. Then $\mathfrak t_n$
is positively graded, each $t_{ij}$ being given degree 1. 

A partially defined map $\phi : [1,m]\supset S_\phi\to [1,n]$ may be identified with the sequence 
$\phi^{-1}(1),...,\phi^{-1}(n)$ of its fibers. Such a partially defined map defines a Lie algebra 
morphism $\mathfrak t_n\to \mathfrak t_m$, $x\mapsto x^\phi$, by 
$t_{ij}\mapsto \sum_{i',j':\phi(i')=i,\phi(j')=j}t_{i'j'}$; it is homogeneous of degree 0. 

Recall that the center $Z(\mathfrak t_n)$ of $\mathfrak t_n$ is equal to 
$\mathbf kz_n$, where $z_n:=\sum_{1\leq i<j\leq n}t_{ij}$. Define 
$\mathfrak P_{n+1}:=\mathfrak t_n/Z(\mathfrak t_n)$ and let $\pi_n : \mathfrak t_n\to\mathfrak P_{n+1}$
be the canonical projection. For $i\neq j\in[1,n+1]$, 
denote by $X_{ij}\in\mathfrak P_n$ the elements such that $X_{ij}:=\pi_n(t_{ij})$ if 
$i\neq j\in[1,n]$ and $X_{i,n+1}=X_{n+1,i}=-\sum_{j:j\neq i,j\in [1,n]}X_{ij}$
for $i\in[1,n]$. 

\subsection{The Lie algebra $\mathfrak{grt}_1$}\label{sect:12}

Let $\mathfrak{lie}(e_0,e_1)$ be the free Lie algebra over generators $e_0,e_1$; we set $e_\infty:=-e_0-e_1$; this Lie algebra is graded, 
with $e_0,e_1$ being given degree 1. 
Let $\mathfrak G\subset \mathfrak{lie}(e_0,e_1)$ be 
the sum of components of degree $>1$. 

Define a linear map 
$$
\mathrm{pent} : \mathfrak t_3\to \mathfrak t_4, C\mapsto C^{2,3,4}+C^{1,23,4}+C^{1,2,3}-C^{12,3,4}-C^{1,2,34}. 
$$
The Lie algebra $\mathfrak{lie}(e_0,e_1)$ is injected in 
$\mathfrak t_3$ by $e_0\mapsto t_{23}$, $e_1\mapsto t_{12}$. 

\begin{definition} (see \cite{Dr})
$\mathfrak{grt}_1$ is the subspace of $\mathfrak{lie}(e_0,e_1)$ of all elements $C(e_0,e_1)$ such that 
$$
C(e_0,e_1)+C(e_1,e_0)=C(e_0,e_1)+C(e_1,e_\infty)+C(e_\infty,e_0)=[e_1,C(e_0,e_1)]+[e_\infty,C(e_0,e_\infty)]=0
$$
and $\mathrm{pent}(C)=0$. 
\end{definition}
Define a Lie algebra structure on $\mathfrak G$ by $\langle C,C'\rangle:=-[C,C']-d_{C'}(C)+d_C(C')$, where 
$d_C$ is the derivation given by $e_0\mapsto[C,e_0],e_1\mapsto 0$. 

\begin{lemma}\label{lem:dr} (see \cite{Dr})
$\mathfrak{grt}_1$ is a Lie subalgebra of $(\mathfrak G,\langle,\rangle)$. 
\end{lemma}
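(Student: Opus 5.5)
The plan is to realize the bracket $\langle,\rangle$ as a commutator of derivations of $\mathfrak t_3$ and of $\mathfrak t_4$, so that Lie closure reduces to the fact that commutators of derivations preserving a set of relations again preserve it.

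For $C\in\mathfrak G$, let $\tilde D_C$ be the derivation of $\mathfrak{lie}(e_0,e_1)$ given by $e_0\mapsto[C,e_0]$ and $e_1\mapsto 0$; this is $d_C$. First I would check the identity
$$[\tilde D_C,\tilde D_{C'}]=\tilde D_{\langle C,C'\rangle}.$$
Both sides kill $e_1$, so it suffices to evaluate on $e_0$. One has $[\tilde D_C,\tilde D_{C'}](e_0)=\tilde D_C([C',e_0])-\tilde D_{C'}([C,e_0])$. Expanding gives $[d_C(C'),e_0]+[C',[C,e_0]]-[d_{C'}(C),e_0]-[C,[C',e_0]]$. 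By Jacobi this equals $[d_C(C')-d_{C'}(C)-[C,C'],e_0]$, which is exactly $[\langle C,C'\rangle,e_0]$. Since $\mathfrak G$ has no degree-$1$ part and centralizers in a free Lie algebra are small, $C\mapsto\tilde D_C$ is injective. Hence $\langle,\rangle$ is a Lie bracket on $\mathfrak G$, and $\mathfrak G$ is isomorphic to a Lie algebra of derivations.

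Next I would characterize $\mathfrak{grt}_1$ as a stabilizer. Following Drinfeld, extend $C$ to a derivation $D_C$ of $\mathfrak t_3$ by $t_{12}\mapsto 0$, $t_{23}\mapsto[C,t_{23}]$ (with $C=C(t_{23},t_{12})$), and $z_3\mapsto 0$. The 2-cycle and hexagon relations should be equivalent to $D_C$ being compatible with the $S_3$-symmetry up to inner derivations. In particular they give $t_{13}\mapsto[C(e_0,e_\infty)\text{-type term},t_{13}]$. Similarly, the pentagon should be equivalent to the existence of a derivation $D$ of $\mathfrak t_4$ making the coface maps $x\mapsto x^{1,2,3}$, $x^{12,3,4}$, and so on, intertwine $D_C$ and $D$ up to inner terms. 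Then $C\in\mathfrak{grt}_1$ if and only if the pair $(D_C,D)$ lies in the Lie algebra of pairs of derivations compatible with a fixed family of morphisms modulo inner derivations. That set is clearly closed under commutator: if $\mu\circ D_1=D'_1\circ\mu+\mathrm{ad}$ and likewise for index $2$, then $\mu\circ[D_1,D_2]=[D'_1,D'_2]\circ\mu+\mathrm{ad}$, because $[\mathrm{ad}_X,D]=\mathrm{ad}_{-D(X)}$ and $\mu$ is a morphism.

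Finally, the relation $D_{\langle C,C'\rangle}=[D_C,D_{C'}]$ holds on $\mathfrak t_3$, since both sides kill $t_{12}$ and $z_3$ and agree on $t_{23}$ by the first step. Combining this with the closure from the second step gives $\langle C,C'\rangle\in\mathfrak{grt}_1$.

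The main obstacle is the second step: showing that the hexagon and pentagon equations are \emph{equivalent} to these intertwining conditions. The pentagon in particular requires an explicit derivation of $\mathfrak t_4$, say $t_{ij}\mapsto$ suitable brackets with $C^{\cdot}$ terms. I would try $D(t_{12})=0$, $D(t_{23})=[C^{1,2,3},t_{23}]$, $D(t_{34})=[C^{12,3,4}+\dots,t_{34}]$, and check the relations using $\mathrm{pent}(C)=0$. If this becomes unwieldy, I would instead fall back on a direct, Ihara-style verification using Ihara's bracket for the special derivations.
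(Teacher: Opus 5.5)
Your Steps 1 and 3 are correct. The Jacobi computation gives $[d_C,d_{C'}]=d_{\langle C,C'\rangle}$. Since $\mathfrak t_3$ is the direct sum of the image of $\mathfrak{lie}(e_0,e_1)$ and $\mathbf kz_3$, your $D_C$ is $d_C\oplus 0$, so $C\mapsto D_C$ is a Lie algebra morphism. On $\mathfrak G_\inert$ it coincides with the paper's $D_C$, because $[h_C,t_{13}]=-[C,t_{23}]$. Closure of intertwined pairs under commutators, modulo inner derivations, is also fine (Lem. \ref{lem:stab:las}). Your architecture is in fact the alternative proof of Lem. \ref{lem:dr} that the paper points out after Thm. \ref{thm:la:char}; otherwise the paper only cites Drinfeld. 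In that proof, $\mathfrak{grt}_1$ is the image, under a Lie algebra projection, of an intersection of stabilizer subalgebras.

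The gap is Step 2, which carries the whole content and is only asserted (``should be equivalent''). Your closure argument needs both directions of that equivalence.
\begin{itemize}
\item For $C\in\mathfrak{grt}_1$ you must actually construct the derivation $D$ of $\mathfrak t_4$ and the inner corrections, and check every relation of $\mathfrak t_4$. In the paper this is Lem. \ref{lem:act:grt}(b), which uses the 2-cycle relation, (5.19) in the form \eqref{inert:C:in:t3}, and the pentagon.
\item More seriously, you must show that the mere existence of such $(D,X)$ forces an element to satisfy \emph{all} defining relations of $\mathfrak{grt}_1$. Without this, knowing that $(D_{\langle C,C'\rangle},[D,D'])$ lies in the stabilizer tells you nothing about $\langle C,C'\rangle$.
\end{itemize}
In the paper this converse (inclusion \eqref{incl:i:ii} in Thm. \ref{thm:main}) needs several real ingredients:
\begin{itemize}
\item Ihara's centralizer result (Lem. \ref{lem:Ct12}), to force $X\in\mathfrak t_3^{12,3,4}$;
\item the kernel computation of Lem. \ref{lem:kertt}, applied to $[t_{14},t_{23}]=[t_{13},t_{24}]=0$, followed by specializations $\mathfrak t_4\to\mathfrak t_3$ that extract the pentagon;
\item Furusho's theorem (Thm. \ref{thm:hf}) that the pentagon alone gives $C\in\mathfrak{grt}_1+\mathbf k[e_0,e_1]$;
\item inertia, to kill the $[e_0,e_1]$ term.
\end{itemize}
The paper never encodes the 2-cycle and hexagon relations as an $S_3$-compatibility. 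They come for free from Furusho's theorem, at the price of working inside $\mathfrak G_\inert$, whose closure under $\langle,\rangle$ is a separate input (Lem. \ref{lem:def:DC}).

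Your $D_C$ is defined on all of $\mathfrak G$, so your route would instead need two proofs you do not supply:
\begin{itemize}
\item that the 2-cycle, hexagon and (5.19) relations are equivalent to $S_3$-equivariance of $[D_C]$ modulo inner derivations, including the tangentiality condition (an Ihara-type statement);
\item that compatibility with your family of cofaces forces the pentagon.
\end{itemize}
The combined characterization must in particular exclude $C=[e_0,e_1]$. This element satisfies the pentagon but not the hexagon, since the hexagon sum equals $3[e_0,e_1]$. Your fallback of a direct verification with Ihara's bracket amounts to redoing the original Drinfeld/Ihara computation, which you do not sketch.
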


The restriction of $\mathrm{pent}$ to 
$\mathfrak G \subset \mathfrak{lie}(e_0,e_1)$ is denoted $\mathrm{pent}\vert_{\mathfrak G}$. 

\begin{theorem}[see \cite{F1}]\label{thm:hf}
One has $\mathrm{ker}(\mathrm{pent}\vert_{\mathfrak G})=\mathfrak{grt}_1+\mathbf k[e_0,e_1]$, where 
$\mathfrak{grt}_1\subset\mathfrak{lie}(e_0,e_1)$ is the subspace defined as in \cite{Dr}, p. 851 (with $A:=e_0,B:=e_1$). 
\end{theorem}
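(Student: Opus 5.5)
The statement is the Lie-algebra form of Furusho's result that the pentagon implies the 2-cycle and hexagon relations. The plan is to work degree by degree and prove the two inclusions separately.

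\emph{Inclusion $\supseteq$ and reduction to homogeneous elements.} Since $\mathrm{pent}$ and the 2-cycle and hexagon relations are homogeneous of degree $0$ for the gradings of $\mathfrak{lie}(e_0,e_1)$ and $\mathfrak t_4$, both sides of the claimed equality are graded subspaces of $\mathfrak G$. It therefore suffices to treat a homogeneous $C$ of degree $d\geq2$. The inclusion $\mathfrak{grt}_1\subset\mathrm{ker}(\mathrm{pent}\vert_{\mathfrak G})$ holds by definition. For $[e_0,e_1]=[t_{23},t_{12}]$, I would compute $\mathrm{pent}([t_{23},t_{12}])$ directly in $\mathfrak t_4$. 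Each term is a bracket of two sums of $t_{ij}$'s, and the defining relations $[t_{ij}+t_{ik},t_{jk}]=0$ and $[t_{ij},t_{kl}]=0$ should make the five terms cancel. This settles degree $2$: there $\mathfrak G_2=\mathbf k[e_0,e_1]$, and $[e_0,e_1]$ violates the hexagon since the hexagon sum equals $3[e_0,e_1]\neq0$. So in degree $2$ the kernel is exactly the summand $\mathbf k[e_0,e_1]$. It remains to show that any homogeneous $C$ of degree $d\geq3$ with $\mathrm{pent}(C)=0$ satisfies the 2-cycle and hexagon relations.

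\emph{2-cycle relation.} Let $C$ have degree $d\geq3$ with $\mathrm{pent}(C)=0$. I would apply the $S_4$-symmetries and the strand-erasing and strand-doubling maps $\mathfrak t_4\to\mathfrak t_3$, $\mathfrak t_4\to\mathfrak t_5$ to the pentagon equation, producing identities in $\mathfrak t_3$ and $\mathfrak t_4$. Using the decomposition $\mathfrak t_4\simeq \mathfrak f_3\rtimes\mathfrak t_3$ (with $\mathfrak f_3$ free on $t_{14},t_{24},t_{34}$) and comparing components, the goal is to show that $C(e_0,e_1)+C(e_1,e_0)$ satisfies a "pentagon with trivial right-hand side". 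For $d\geq3$ that forces it to vanish. The intermediate target is: if $S\in\mathfrak{lie}(e_0,e_1)$ is homogeneous of degree $\geq3$ and $S^{1,2,3}-S^{12,3,4}+\dots$ lies in the centre of $\mathfrak t_4$, then $S=0$. I would prove this by projecting to $\mathfrak f_3$ and using that centralizers in free Lie algebras are one-dimensional.

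\emph{Hexagon relation (main obstacle).} I expect the hexagon to be the main obstacle, since it is not a formal consequence of local manipulations in $\mathfrak t_4$. My first attempt would follow \cite{F1} and pass to the geometry of $\mathcal M_{0,5}$. Set $\mathfrak P_5=\mathfrak t_4/Z$, identified with the holonomy Lie algebra of $\mathcal M_{0,5}$, and consider the pentagon as a relation among the ten dihedrally related copies of $C$ in $\mathfrak P_5$. The steps are:
\begin{enumerate}
\item Use the dihedral symmetry of $\mathcal M_{0,5}$ to produce from $\mathrm{pent}(C)=0$ a cyclically symmetric relation.
\item Restrict it along the boundary embeddings $\mathfrak P_4\hookrightarrow\mathfrak P_5$ corresponding to the divisors of $\overline{\mathcal M}_{0,5}$.
\item Extract, after projecting to $\mathfrak P_4\simeq\mathfrak{lie}(e_0,e_1)$, the element $H:=C(e_0,e_1)+C(e_1,e_\infty)+C(e_\infty,e_0)$. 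Show that $H$ is annihilated by a map that is injective in degrees $\geq3$, giving $H=0$. The relation $[e_1,C(e_0,e_1)]+[e_\infty,C(e_0,e_\infty)]=0$ would follow similarly.
\end{enumerate}
If the direct algebraic route stalls, the fallback is Furusho's group-theoretic argument. Exponentiate to pentagon solutions and use the existence of a Drinfeld associator (e.g. $\Phi_{\mathrm{KZ}}$) satisfying all relations, together with its torsor structure. This reduces the question to showing that the group of pentagon-only solutions with trivial degree-2 part acts compatibly with the braiding, and one then linearizes back to $\mathfrak{grt}_1$.
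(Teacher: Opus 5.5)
Your handling of degree $2$ and of the inclusion $\supseteq$ is fine, but both substantive steps are placeholders, and the tools you name for the 2-cycle step cannot give it. For $C\in\mathfrak G$, every strand-erasing map $\mathfrak t_4\to\mathfrak t_3$ annihilates $\mathrm{pent}(C)$ identically, because the surviving terms cancel in pairs. For instance, erasing strand $4$ leaves $C(t_{23},t_{12})-C(t_{23},t_{12})$, coming from $C^{1,2,3}$ and $C^{1,2,34}$, and strands $1,2,3$ behave the same way. Doubling maps only produce relations in $\mathfrak t_5$, and the content of your ``intermediate target'' is hidden in the ``$\dots$''. The missing idea is the hidden fifth strand. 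Since $\pi:\mathfrak t_4\to\mathfrak P_5$ is injective in degrees $>1$, you may work in $\mathfrak P_5$. Using $X_{a,bc}+X_{bc}=X_{de}$ and $[X_{a,bc},X_{bc}]=0$, one finds
\[
\pi\bigl((\mathrm{pent}\,C)^{4,3,2,1}\bigr)=C(X_{12},X_{23})+C(X_{45},X_{51})+C(X_{23},X_{34})-C(X_{21},X_{15})-C(X_{54},X_{43}).
\]
Erasing the point at infinity, $\mathfrak P_5\to\mathfrak P_4$, $X_{i5}\mapsto 0$, sends this to $C(X_{12},X_{23})+C(X_{23},X_{12})$, since $X_{34}=X_{12}$ in $\mathfrak P_4$. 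As $\mathfrak P_4$ is free on $X_{12},X_{23}$, the 2-cycle relation follows in all degrees at once, with no semidirect-product or centralizer argument.

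For the hexagon, your three steps do not say where $H$ comes from or which map is injective. Moreover, ``restricting along embeddings $\mathfrak P_4\hookrightarrow\mathfrak P_5$'' goes the wrong way: consequences of a relation in $\mathfrak P_5$ are obtained by applying morphisms out of $\mathfrak P_5$. The associator fallback is circular, since showing that pentagon-only solutions are compatible with the braiding is the statement itself. The paper does not reprove this part. It invokes \cite{F1}, Thm.~3, namely $\ker(\mathrm{ihpent})=\mathfrak{grt}_1+\mathbf k[e_0,e_1]$ for the cyclic form $\mathrm{ihpent}(C)=\sum_{i\in\mathbb Z/5}C(X_{i,i+1},X_{i+1,i+2})$; its real work is the bridge from the $\mathfrak t_4$ form to this one. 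Your step (1) gestures at that bridge but misses that the five-term expression displayed above is cyclically symmetric only once the 2-cycle is known, because two of its terms carry reversed arguments and a minus sign. The paper closes this with a kernel comparison. Let $\mathfrak G_0\subset\mathfrak G$ be the solutions of the 2-cycle relation. Both $\mathrm{ihpent}$ and $\pi\circ(x\mapsto x^{4,3,2,1})\circ\mathrm{pent}\vert_{\mathfrak G}$ have kernel inside $\mathfrak G_0$, by the same erasure of strand $5$. They agree on $\mathfrak G_0$, so their kernels are equal. Injectivity of $\pi$ in degrees $>1$ then gives $\ker(\mathrm{pent}\vert_{\mathfrak G})=\ker(\mathrm{ihpent})$.
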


\begin{proof}
Let $\mathfrak G_0\subset\mathfrak G$ be the set of elements $C$ satisfying the 2-cycle relation
$C(e_0,e_1)+C(e_1,e_0)=0$. 

Let $\mathrm{ihpent} : \mathfrak G\to \mathfrak P_5$ be the linear map $C\mapsto 
C(X_{12},X_{23})+C(X_{23},X_{34})+C(X_{34},X_{45})+C(X_{45},X_{51})+C(X_{51},X_{12})$. 
The composition of $\pi\circ(x\mapsto x^{4,3,2,1})\circ \mathrm{pent}\vert_{\mathfrak G}$
is the map $\mathfrak G\to \mathfrak P_5$ given by $C\mapsto 
C(X_{12},X_{23})+C(X_{45},X_{51})+C(X_{23},X_{34})-C(X_{21},X_{15})-C(X_{54},X_{43})$, 
by virtue of the equalities\footnote{We set $X_{a,bc}=X_{bc,a}:=X_{ab}+X_{ab}$ 
for distinct $a,b,c$} $X_{a,bc}+X_{bc}=X_{de}$ for $a,b,c,d,e$ distinct in $[1,5]$, of the commutativity relations
$[X_{a,bc},X_{bc}]=0$ for $a,b,c$ distinct in $[1,5]$, and of the fact that $\mathfrak G$ is supported in degree $>1$. 

The composition of both $\mathrm{ihpent}$ and 
$\pi\circ(x\mapsto x^{4,3,2,1})\circ \mathrm{pent}\vert_{\mathfrak G}$ with the Lie algebra
morphism $\mathfrak P_5\to\mathfrak P_4$, $X_{ij}\mapsto X_{ij}$ if $i,j\neq5$ and $X_{i5}=X_{5i}\mapsto 0$ if $i\neq 5$
is the map $\mathfrak G\to\mathfrak P_4$, $C\mapsto C(X_{12},X_{23})+C(X_{23},X_{34})=C(X_{12},X_{23})+C(X_{32},X_{21})$, 
where the equality follows from the relation $X_{12}=X_{34}$ in $\mathfrak P_4$. This is the composition of the self-map
$C(e_0,e_1)\mapsto C(e_0,e_1)+C(e_1,e_0)$ of $\mathfrak G$ with the composition 
$\mathfrak G\subset \mathfrak{lie}(e_0,e_1)\stackrel\sim{\to}\mathfrak P_4$, where the isomorphism is induced by 
$e_0\mapsto X_{12}$, $e_1\mapsto X_{23}$. It follows that the kernels of both $\mathrm{ihpent}$ and
 $\pi\circ(x\mapsto x^{4,3,2,1})\circ \mathrm{pent}\vert_{\mathfrak G}$ are contained in $\mathfrak G_0$. 
 
 The restriction to $\mathfrak G_0$ of the maps $\pi\circ(x\mapsto x^{4,3,2,1})\circ \mathrm{pent}\vert_{\mathfrak G}$
 and $\mathrm{ihpent}$ coincide; this implies the equality of $\mathrm{ker}(\mathrm{ihpent})$ and 
 $\mathrm{ker}(\pi\circ(x\mapsto x^{4,3,2,1})\circ \mathrm{pent}\vert_{\mathfrak G})$. 
  Since $\mathfrak G$ is supported in degree $>1$ and since $\pi$ is an isomorphism in these degrees, and since 
  $(x\mapsto x^{4,3,2,1})$ is an automorphism, the latter kernel is equal to $\mathrm{ker}(\mathrm{pent}\vert_{\mathfrak G})$, hence 
  $  \mathrm{ker}(\mathrm{pent}\vert_{\mathfrak G})=\mathrm{ker}(\mathrm{ihpent})$. 
 The result then follows from the combination of this equality with the equality 
 $\mathrm{ker}(\mathrm{ihpent})=\mathfrak{grt}_1+\mathbf k[e_0,e_1]$ from   
  \cite{F1}, Thm. 3. 
 \end{proof}

Define $\mathfrak G_{\inert}$ to be the set of $C\in \mathfrak G$ such that there exists 
$h_C\in \mathfrak G$ (necessarily unique) such that $[C,e_0]+[h_C,e_\infty]=0$. 

\begin{lemma}\label{lem:def:DC}
There is a map $\mathfrak G_{\inert}\to \mathfrak{der}(\mathfrak t_3)$, $C\mapsto D_C$, such that
$D_C$ is given by $t_{12}\mapsto 0$, $t_{23}\mapsto [C(t_{23},t_{12}),t_{23}]$, $t_{13}\mapsto [h_C(t_{23},t_{12}),t_{13}]$; $(\mathfrak G_{\inert},\langle,\rangle)$ is a Lie subalgebra of 
$(\mathfrak G,\langle,\rangle)$ and $C\mapsto D_C$ is a Lie algebra morphism. 
\end{lemma}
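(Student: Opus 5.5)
We need three things: that $D_C$ is a well-defined derivation of $\mathfrak t_3$, that $\mathfrak G_\inert$ is closed under $\langle,\rangle$, and that $C\mapsto D_C$ is a Lie algebra morphism, i.e. $[D_C,D_{C'}]=D_{\langle C,C'\rangle}$ for the commutator bracket in $\mathfrak{der}(\mathfrak t_3)$.

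\emph{Well-definedness.} We use the structure $\mathfrak t_3=\mathfrak{lie}(t_{12},t_{23})\oplus\mathbf k z_3$ with $z_3$ central. Since $\mathfrak t_3$ is presented by generators and relations, it suffices to check that the proposed images satisfy the defining relations $[t_{12}+t_{13},t_{23}]=0$ and its cyclic variants; equivalently, that $z_3=t_{12}+t_{13}+t_{23}$ is central after applying $D_C$. First,
$$D_C(z_3)=[C,t_{23}]+[h_C,t_{13}].$$
Modulo the center, $t_{13}\equiv -t_{12}-t_{23}$, which corresponds to $e_\infty$ under $e_0\mapsto t_{23}$, $e_1\mapsto t_{12}$. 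So the inertness condition $[C,e_0]+[h_C,e_\infty]=0$ gives $D_C(z_3)=0$, using that $[h_C,z_3]=0$. Next, the relation $[t_{12},t_{13}+t_{23}]=0$ is equivalent to $[t_{12},z_3]=0$, and applying $D_C$ gives $[0,z_3]+[t_{12},D_C(z_3)]=0$. The relation $[t_{23},t_{12}+t_{13}]=0$, i.e. $[t_{23},z_3]=0$, gives $[D_C t_{23},z_3]+[t_{23},0]=0$, since $z_3$ is central. The remaining relation is handled the same way. The cleanest route is to define $D_C$ on the free Lie algebra on $t_{12},t_{13},t_{23}$ and check that the ideal generated by the relations $[t_{ij},z_3]$ is preserved. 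This works because $D_C(z_3)=0$ holds in the free algebra modulo the relations: we need $[h_C,t_{12}+t_{23}+t_{13}]$ to vanish there, and it does, since $h_C$ lies in the span of brackets and $z_3$ is central modulo the relations.

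\emph{Closure and the morphism property.} I would avoid computing $\langle C,C'\rangle$ directly. Instead, compare $D_C$ with the derivation $d_C$ of $\mathfrak{lie}(e_0,e_1)$. Under the identification $\mathfrak t_3/\mathbf k z_3\simeq\mathfrak{lie}(e_0,e_1)$, $D_C$ induces $\tilde d_C$, defined by $e_1\mapsto0$ and $e_0\mapsto[C,e_0]$. This is exactly Drinfeld's $d_C$. Moreover $\tilde d_C(e_\infty)=[h_C,e_\infty]$, so $D_C$ is the unique derivation of $\mathfrak t_3$ that kills $t_{12}$ and $z_3$ and lifts $d_C$. Here uniqueness holds because $\mathfrak t_3=\mathfrak{lie}(t_{12},t_{23})\oplus\mathbf kz_3$ and a derivation is determined by its values on generators.

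The standard fact (\cite{Dr}) is that $[d_C,d_{C'}]=d_{\langle C,C'\rangle}$ on $\mathfrak{lie}(e_0,e_1)$. Indeed, $[d_C,d_{C'}](e_0)=d_C([C',e_0])-d_{C'}([C,e_0])=[d_C C'-d_{C'}C+[C',C],e_0]$ after a Jacobi manipulation, and $[d_C,d_{C'}](e_1)=0$. So $[D_C,D_{C'}]$ kills $t_{12}$ and $z_3$ and lifts $d_{\langle C,C'\rangle}$. Evaluating at $e_\infty$ then gives
$$[d_C,d_{C'}](e_\infty)=d_C([h_{C'},e_\infty])-d_{C'}([h_C,e_\infty])=[d_Ch_{C'}-d_{C'}h_C+[h_C,h_{C'}],e_\infty].$$
The element $h:=d_Ch_{C'}-d_{C'}h_C+[h_C,h_{C'}]$ lies in $\mathfrak G$ and satisfies $[\langle C,C'\rangle,e_0]+[h,e_\infty]=0$. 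Hence $\langle C,C'\rangle\in\mathfrak G_\inert$ with $h_{\langle C,C'\rangle}=h$, and by uniqueness $[D_C,D_{C'}]=D_{\langle C,C'\rangle}$.

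The main obstacle is verifying the identity $[d_C,d_{C'}]=d_{\langle C,C'\rangle}$ with the paper's sign conventions: the bracket $\langle C,C'\rangle=-[C,C']-d_{C'}C+d_CC'$ must match what the Jacobi computation produces. If the signs disagree, I would recheck the convention for $d_C$ against $[C,e_0]$ and redo that single bracket computation on $e_0$.
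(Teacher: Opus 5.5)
Your proposal is correct. For the well-definedness of $D_C$ it is the paper's argument: the only relation of $\mathfrak t_3$ is the centrality of $z_3$, and $D_C(z_3)=0$ is the image of $[C,e_0]+[h_C,e_\infty]=0$ under $e_0\mapsto t_{23}$, $e_1\mapsto t_{12}$, using $e_\infty\mapsto t_{13}-z_3$.

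The second half takes a different route. The paper cites \cite{EF}, Lem.~0.7(c), for the closure of $\mathfrak G_\inert$ under $\langle,\rangle$, and calls the morphism property a direct verification. You instead derive both from the single identity $[d_C,d_{C'}]=d_{\langle C,C'\rangle}$ on $\mathfrak{lie}(e_0,e_1)$, and your sign check of this identity on $e_0$ is right. The key observation is that $\mathfrak G_\inert$ consists of those $C$ for which $d_C(e_\infty)=-[C,e_0]$ is again a bracket with $e_\infty$. Derivations with that property are closed under commutators. Your route gives a self-contained proof, a conceptual reason for the closure, and an explicit formula for $h_{\langle C,C'\rangle}$; the paper's citation gives brevity.

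Two small repairs are needed.

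\begin{enumerate}
\item \emph{Sign of the bracket term in $h$.} By Jacobi, $[h_{C'},[h_C,e_\infty]]-[h_C,[h_{C'},e_\infty]]=[[h_{C'},h_C],e_\infty]$. So the correct element is $h_{\langle C,C'\rangle}=d_Ch_{C'}-d_{C'}h_C-[h_C,h_{C'}]$, not $+[h_C,h_{C'}]$. This mirrors $\langle C,C'\rangle=d_CC'-d_{C'}C-[C,C']$, matching your correct computation on $e_0$.
\item \emph{Uniqueness of the lift.} A derivation of $\mathfrak t_3$ that kills $t_{12}$ and $z_3$ and lifts $d_C$ is not unique. You may add $t_{23}\mapsto cz_3$, $t_{13}\mapsto -cz_3$ for any scalar $c$. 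To conclude $[D_C,D_{C'}]=D_{\langle C,C'\rangle}$, add the observation that both derivations send $t_{23}$ into $[\mathfrak t_3,\mathfrak t_3]$. That subspace lives in degrees $\geq 2$, so it meets $\mathbf kz_3$ trivially and maps injectively to $\mathfrak t_3/\mathbf kz_3$.
\end{enumerate}
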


\begin{proof}
The only relation in $\mathfrak t_3$ is the centrality of $t_{12}+t_{13}+t_{23}$, and this element is taken to 0 by 
$D_C$ as can be seen by applying to identity $[C(e_0,e_1),e_0]+[h_C(e_0,e_1),e_\infty]=0$ the morphism 
$\mathfrak{lie}(e_0,e_1)\to\mathfrak t_3$, $e_0\mapsto t_{32},e_1\mapsto t_{21}$ and by the centrality of $t_{12}+t_{13}+t_{23}$. The fact that $(\mathfrak G_{\inert},\langle,\rangle)$ is a Lie subalgebra of 
$(\mathfrak G,\langle,\rangle)$ follows from \cite{EF}, Lem. 0.7(c), 
and the last statement is a direct verification. 
\end{proof}

\subsection{Auxiliary results}\label{sect:aux}

For $\mathfrak a$ a Lie algebra and $a\in \mathfrak a$, we set $\mathrm C_a(\mathfrak a):=\{x\in\mathfrak a|[a,x]=0\}$. 

\begin{lemma}\label{lem:Ct12}
For any $n\geq2$ and any $i,j\in[1,n]$ with $i\neq j$, 
one has $\mathrm C_{\mathfrak t_n}(t_{ij})=\mathbf k t_{ij}+\mathfrak t_{n-1}^{\phi}$, where 
$\phi :[1,n]\to [1,n-1]$ is any surjective map such that the fiber of cardinality $>1$ is $\{i,j\}$
(if $i<j$, then $\phi$ may be the map $\alpha\mapsto \alpha$ for $\alpha<j$, $\alpha\mapsto \alpha-1$ for 
$\alpha>j$, and $j\mapsto i$). 
\end{lemma}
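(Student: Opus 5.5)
By symmetry of indices (the $S_n$-action on $\mathfrak t_n$ permuting generators), I reduce to $(i,j)=(n-1,n)$. Then $\phi$ merges $n-1$ and $n$, and $\mathfrak t_{n-1}^\phi$ is the image of the cabling morphism $\mathfrak t_{n-1}\to\mathfrak t_n$, $x\mapsto x^{1,\dots,n-2,\{n-1,n\}}$. Any other admissible $\phi$ differs from this one by a permutation of $[1,n-1]$, which is an automorphism of $\mathfrak t_{n-1}$, so the image is independent of the choice.

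\textbf{Inclusion $\supseteq$.} This is a direct check on generators of $\mathfrak t_{n-1}$. The image of $t_{ab}$ with $a,b\le n-2$ is $t_{ab}$, which commutes with $t_{n-1,n}$ by the relation $[t_{ij},t_{kl}]=0$ for distinct indices. The image of $t_{a,n-1}$ is $t_{a,n-1}+t_{a,n}$, which commutes with $t_{n-1,n}$ by the relation $[t_{ij}+t_{ik},t_{jk}]=0$. Since the centralizer is a Lie subalgebra, the whole image lies in it, and so does $\mathbf k t_{n-1,n}$.

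\textbf{Inclusion $\subseteq$.} I use the semidirect product decomposition $\mathfrak t_n=\mathfrak f_{n-1}\rtimes\mathfrak t_{n-1}$. Here $\mathfrak f_{n-1}$ is the free Lie algebra on $t_{1n},\dots,t_{n-1,n}$, and $\mathfrak t_{n-1}$ is embedded on the indices $1,\dots,n-1$. The plan is to change this splitting to one adapted to the cabling. The ideal $\mathfrak f_{n-1}$ is the kernel of erasing strand $n$, and the composite of the cabling map with this erasure is the identity of $\mathfrak t_{n-1}$. Hence $\mathfrak t_n=\mathfrak f_{n-1}\oplus\mathfrak t_{n-1}^\phi$ as vector spaces, and the latter summand is a subalgebra.

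Now write $x=f+y^\phi$ with $f\in\mathfrak f_{n-1}$ and $y\in\mathfrak t_{n-1}$. Since $y^\phi$ commutes with $t_{n-1,n}$ by the first step, $[t_{n-1,n},x]=0$ reduces to $[t_{n-1,n},f]=0$. Both $t_{n-1,n}$ and $f$ lie in the free Lie algebra $\mathfrak f_{n-1}$, where $t_{n-1,n}$ is a free generator. The centralizer of a free generator (indeed of any nonzero homogeneous element) in a free Lie algebra is the line it spans. This can be shown via the universal enveloping algebra (free associative algebra), using that $[u,a]=0$ with $u$ a letter forces $a$ to be a polynomial in $u$, or by citing the standard fact. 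Therefore $f\in\mathbf k t_{n-1,n}$, which gives the claim.

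\textbf{Main obstacle.} The main point to get right is the decomposition step: verifying that the cabled copy $\mathfrak t_{n-1}^\phi$ is a complement to the ideal $\mathfrak f_{n-1}$. This requires the erasing morphism of strand $n$ to kill exactly $\mathfrak f_{n-1}$, which is the standard Drinfeld–Kohno semidirect-product statement that I would cite, and to send $t_{a,n-1}+t_{an}\mapsto t_{a,n-1}$. The free-Lie centralizer fact is classical. For $n=2$ the statement is immediate, since $\mathfrak t_2=\mathbf k t_{12}$.
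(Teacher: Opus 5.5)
Your proof is correct, but it takes a different route from the paper's.

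\textbf{The paper's route.} It reduces to $(i,j)=(1,2)$ and imports Ihara's description of $\mathrm C_{\mathfrak P_{n+1}}(X_{12})$ in the sphere algebra (his Prop.~3.3.1). It then pulls this back along $\pi_n:\mathfrak t_n\to\mathfrak P_{n+1}$ in three steps. First, a degree argument identifies $\pi_n^{-1}(\mathrm C_{\mathfrak P_{n+1}}(X_{12}))$ with $\mathrm C_{\mathfrak t_n}(t_{12})$, because the kernel $Z(\mathfrak t_n)$ lies in degree $1$. Second, the generators are rewritten via $\sum_i X_{i,n+1}=0$ to recognise the image of the cabling map. Third, the center is absorbed using $z_n=t_{12}+z_{n-1}^{12,3,\ldots,n}$.

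\textbf{Your route.} You stay inside $\mathfrak t_n$. Since cabling is a section of the erasure of strand $n$, you get $\mathfrak t_n=\mathfrak f_{n-1}\oplus\mathfrak t_{n-1}^\phi$ at no cost. The question then collapses to the centralizer of a free generator in a free Lie algebra.

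\textbf{Comparison.} Your argument is more self-contained: its only deep input is Kohno's freeness of the kernel of erasure. It avoids the bookkeeping with the center. It also yields slightly more, namely that the sum $\mathbf k t_{ij}+\mathfrak t_{n-1}^\phi$ is direct and that $x\mapsto x^\phi$ is injective. The paper's route has the merit of reusing a published statement about $\mathfrak P_{n+1}$, an algebra it needs later anyway.

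\textbf{A small caveat.} Your parenthetical claim that the centralizer of any nonzero homogeneous element of a free Lie algebra is the line it spans holds over a field. It can fail when $\mathbf k$ is a $\mathbb Q$-algebra with zero divisors, which the paper allows. You only need the free-generator case, though. Your tensor-algebra argument proves that case over any such $\mathbf k$: commuting with a letter $u$ forces membership in $\mathbf k[u]$, whose intersection with the free Lie algebra is $\mathbf k u$.
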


\begin{proof}
One can assume $i=1,j=2$, in which case the result to be proved is 
$\mathrm C_{\mathfrak t_n}(t_{12})=\mathbf k t_{12}+\mathfrak t_{n-1}^{12,3,\ldots,n}$. 
Denoting by $\langle S\rangle$ the Lie subalgebra
generated by a family $S$, one has 
\begin{align}\label{ihara:explicit}
&\nonumber \mathrm C_{\mathfrak P_{n+1}}(X_{12})
\textstyle =\langle X_{12},X_{ij} : 3\leq i\neq j\leq n+1\rangle
=\mathbf kX_{12}
+\langle X_{ij} : 3\leq i\neq j\leq n\rangle
\\ &\nonumber \textstyle
=\mathbf kX_{12}
+\langle \{X_{ij} : 3\leq i<j\leq n+1\}\cup\{-\sum_{j:j\neq i,j\in [1,n]}X_{ij} : 3\leq i\leq n\}\rangle
\\ & \textstyle
=\mathbf kX_{12}
+\langle \{X_{ij} : 3\leq i<j\leq n\}\cup\{X_{1i}+X_{2i} : 3\leq i\leq n\}\rangle
\end{align}
where the first equality follows from 
\cite{I1}, Prop. 3.3.1, 
the second equality follows from the relation $[X_{12},X_{ij}]=0$ for any $i,j$ with 
$3\leq i\neq j\leq n+1$, the third equality follows from  the relation $\sum_{i=1}^nX_{i,n+1}=0$, and the 
last equality follows from the fact that the terms $X_{ij},i\geq3 $  in $\sum_{j:j\neq i,j\in [1,n]}X_{ij}$
(where $3\leq i\leq n$) belong to the generating set $\{X_{ij} : 3\leq i<j\leq n+1\}$. 

The preimage $\pi^{-1}(\mathrm C_{\mathfrak P_{n+1}}(X_{12}))$ of $\mathrm C_{\mathfrak P_{n+1}}(X_{12})$ 
by $\pi$ is $\{x\in \mathfrak t_n|[t_{12},x]\in 
Z(\mathfrak t_n)\}$. Since for any $x\in\mathfrak t_n$, $[t_{12},x]$ is a sum of terms of degrees $>1$, and since 
$Z(\mathfrak t_n)$  is contained in the degree $1$ part of $\mathfrak t_n$, the relation $[t_{12},x]\in 
Z(\mathfrak t_n)$ for $x\in \mathfrak t_n$ implies $[t_{12},x]=0$. It follows  
$$
\pi^{-1}(\mathrm C_{\mathfrak P_{n+1}}(X_{12}))=\mathrm C_{\mathfrak t_n}(t_{12}).
$$
On the other hand \eqref{ihara:explicit} implies
$$
\pi^{-1}(\mathrm C_{\mathfrak P_{n+1}}(X_{12}))
=\pi^{-1}(\mathbf kX_{12}
+\langle \{X_{ij} : 3\leq i<j\leq n\}\cup\{X_{1i}+X_{2i} : 3\leq i\leq n\}\rangle). 
$$
The combination of the two previous equalities implies 
\begin{equation}\label{partial:comm}
\mathrm C_{\mathfrak t_n}(t_{12})=\pi^{-1}(\mathbf kX_{12}
+\langle \{X_{ij} : 3\leq i<j\leq n\}\cup\{X_{1i}+X_{2i} : 3\leq i\leq n\}\rangle). 
\end{equation}

Recall that the Lie algebra $\mathfrak t_{n-1}$ is generated by the set
$\{t_{ij} : 1\leq i<j\leq n-1\}$. It follows that its image 
$\mathfrak t_{n-1}^{12,3,\ldots,n}$ by the morphism 
$\mathfrak t_{n-1}\to\mathfrak t_n$, $x\mapsto x^{12,3,\ldots,n}$ is 
the Lie subalgebra of $\mathfrak t_n$ generated by the image of this set, which is 
$\{t_{ij} : 3\leq i<j\leq n\}\cup\{t_{1i}+t_{2i} : 3\leq i\leq n\}$. 
The image of $\mathfrak t_{n-1}^{12,3,\ldots,n}$ by the morphism $\pi$
is therefore the Lie subalgebra of $\mathfrak P_{n+1}$ generated by 
$\pi(\{t_{ij} : 3\leq i<j\leq n\}\cup\{t_{1i}+t_{2i} : 3\leq i\leq n\})$, which is 
$\{X_{ij} : 3\leq i<j\leq n\}\cup\{X_{1i}+X_{2i} : 3\leq i\leq n\}$. Therefore 
$$
\pi(\mathfrak t_{n-1}^{12,3,\ldots,n})=\langle 
\{X_{ij} : 3\leq i<j\leq n\}\cup\{X_{1i}+X_{2i} : 3\leq i\leq n\}
\rangle.  
$$
Since $\pi(t_{12})=X_{12}$, one has $\pi(\mathbf kt_{12})=\mathbf kX_{12}$, which together with the previous equality 
implies 
$$
\pi(\mathbf kt_{12}+\mathfrak t_{n-1}^{12,3,\ldots,n})=\mathbf kX_{12}+\langle 
\{X_{ij} : 3\leq i<j\leq n\}\cup\{X_{1i}+X_{2i} : 3\leq i\leq n\}
\rangle.  
$$
As the kernel of $\pi$ is $Z(\mathfrak t_n)$, this implies  
$$
\pi^{-1}(\mathbf kX_{12}+\langle 
\{X_{ij} : 3\leq i<j\leq n\}\cup\{X_{1i}+X_{2i} : 3\leq i\leq n\})
=\mathbf kt_{12}+\mathfrak t_{n-1}^{12,3,\ldots,n}+Z(\mathfrak t_n). 
$$
Together with \eqref{partial:comm}, this implies 
\begin{equation}\label{quasi:final:comm}
\mathrm C_{\mathfrak t_n}(t_{12})=\mathbf kt_{12}+\mathfrak t_{n-1}^{12,3,\ldots,n}+Z(\mathfrak t_n). 
\end{equation}
The equality $z_n=t_{12}+z_{n-1}^{12,3,\ldots,n}$ implies 
$Z(\mathfrak t_n)\subset \mathbf kt_{12}+\mathfrak t_{n-1}^{12,3,\ldots,n}$, therefore 
$\mathbf kt_{12}+\mathfrak t_{n-1}^{12,3,\ldots,n}+Z(\mathfrak t_n)=\mathbf kt_{12}+\mathfrak t_{n-1}^{12,3,\ldots,n}$. 
The result follows from the combination of this equality with \eqref{quasi:final:comm}. 
\end{proof}

\begin{lemma}\label{lem:kertt}
The kernel of the linear endomorphism of $\mathfrak t_4$ given by $x\mapsto [t_{14},[t_{23},x]]$ is 
$\mathrm{ker}(x\mapsto [t_{14},[t_{23},x]])=\mathfrak t_3^{1,23,4}+\mathfrak t_3^{14,2,3}$. 
\end{lemma}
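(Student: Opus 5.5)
The plan is to prove the two inclusions separately. Both will rest on Lemma \ref{lem:Ct12}, applied to $t_{23}$ and to $t_{14}$ in $\mathfrak t_4$, together with the semidirect product decomposition of $\mathfrak t_4$ given by erasing strand $4$.

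\emph{Inclusion $\supset$.} By Lemma \ref{lem:Ct12}, $\mathfrak t_3^{1,23,4}\subset \mathrm C_{\mathfrak t_4}(t_{23})$. Hence $[t_{23},x]=0$ for $x\in\mathfrak t_3^{1,23,4}$, and such $x$ lie in the kernel.

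Now take $x\in\mathfrak t_3^{14,2,3}$. The element $t_{23}$ is the image of $t_{23}$ under $x\mapsto x^{14,2,3}$, so it lies in the subalgebra $\mathfrak t_3^{14,2,3}$. Therefore $[t_{23},x]\in\mathfrak t_3^{14,2,3}$. Again by Lemma \ref{lem:Ct12}, $\mathfrak t_3^{14,2,3}\subset\mathrm C_{\mathfrak t_4}(t_{14})$, so $[t_{14},[t_{23},x]]=0$.

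\emph{Inclusion $\subset$.} Let $x$ be in the kernel. Since the map is homogeneous, we may assume $x$ is homogeneous of degree $\geq1$. Then $[t_{23},x]\in\mathrm C_{\mathfrak t_4}(t_{14})=\mathbf k t_{14}+\mathfrak t_3^{14,2,3}$. Because $[t_{23},x]$ has degree $\geq2$ while $t_{14}$ has degree $1$, this gives $[t_{23},x]\in\mathfrak t_3^{14,2,3}$.

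Next consider the strand-erasing morphism $\mathrm{pr}_4:\mathfrak t_4\to\mathfrak t_3$, which kills $t_{i4}$ and fixes $t_{ij}$ for $i,j\leq3$. Its kernel $F$ is an ideal; it is the free Lie algebra on $t_{14},t_{24},t_{34}$, by the standard semidirect decomposition. The composition $\mathrm{pr}_4\circ(x\mapsto x^{14,2,3})$ is the identity of $\mathfrak t_3$. Consequently $\mathfrak t_4=F\oplus\mathfrak t_3^{14,2,3}$ as vector spaces.

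Write $x=f+y$ with $f\in F$ and $y\in\mathfrak t_3^{14,2,3}$. Then $[t_{23},y]\in\mathfrak t_3^{14,2,3}$, and $[t_{23},f]\in F$ because $F$ is an ideal. Since $[t_{23},x]\in\mathfrak t_3^{14,2,3}$, it follows that $[t_{23},f]\in F\cap\mathfrak t_3^{14,2,3}=0$. So $f\in\mathrm C_{\mathfrak t_4}(t_{23})=\mathbf k t_{23}+\mathfrak t_3^{1,23,4}$ by Lemma \ref{lem:Ct12}. As $t_{23}\in\mathfrak t_3^{14,2,3}$, we conclude $x=f+y\in\mathfrak t_3^{1,23,4}+\mathfrak t_3^{14,2,3}$.

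The only nontrivial input beyond Lemma \ref{lem:Ct12} is the decomposition $\mathfrak t_4=F\oplus\mathfrak t_3^{14,2,3}$, and specifically that $F=\ker(\mathrm{pr}_4)$ is an ideal. I expect this to be the main point requiring care. It follows from $\mathrm{pr}_4$ being a Lie algebra morphism with a section, so even the freeness of $F$ is not actually needed.
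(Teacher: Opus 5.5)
Your proof is correct and is essentially the paper's argument: the paper also applies Lemma \ref{lem:Ct12} first to $t_{14}$ and then to $t_{23}$. Your splitting $x=f+y$ along $\ker(\mathrm{pr}_4)\oplus\mathfrak t_3^{14,2,3}$ is exactly the paper's decomposition $x=(x-x^{14,2,3,\emptyset})+x^{14,2,3,\emptyset}$. The paper reaches it by applying $x\mapsto x^{1,2,3,\emptyset}$ and then $x\mapsto x^{14,2,3}$ to $[t_{23},x]=y^{14,2,3}$, where you instead use that $\ker(\mathrm{pr}_4)$ is an ideal meeting $\mathfrak t_3^{14,2,3}$ trivially.
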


\begin{proof}
It follows from the relations $[t_{23},\mathfrak t_3^{1,23,4}]=[t_{14},\mathfrak t_3^{14,2,3}]=0$ and $[t_{14},t_{23}]=0$
that $\mathrm{ker}(x\mapsto [t_{14},[t_{23},x]])$ contains $\mathfrak t_3^{1,23,4}+\mathfrak t_3^{14,2,3}$; let us now prove 
the opposite inclusion, i.e. 
\begin{equation}\label{inclusion:ker:tt}
\mathrm{ker}(x\mapsto [t_{14},[t_{23},x]])\subset 
\mathfrak t_3^{1,23,4}+\mathfrak t_3^{14,2,3}. 
\end{equation}
 Let $x\in \mathrm{ker}(x\mapsto [t_{14},[t_{23},x]])$. Then 
$[t_{14},[t_{23},x]]=0$. By Lem. \ref{lem:Ct12}, and since $x$ belongs to the part of $\mathfrak t_3$ of degree $>1$, 
this implies the existence of $y\in \mathfrak t_3$, 
such that 
\begin{equation}\label{temp:eq}
    [t_{23},x]=y^{14,2,3}. 
\end{equation}
Applying $x\mapsto x^{1,2,3,\emptyset}$ to this equality, one obtains $y=[t_{23},x^{1,2,3,\emptyset}]$, and further applying 
$x\mapsto x^{14,2,3}$, one obtains $y^{14,2,3}=[t_{23},x^{14,2,3,\emptyset}]$. Combining the
latter equality with \eqref{temp:eq}, one obtains $[t_{23},x-x^{14,2,3,\emptyset}]=0$, which by Lem. \ref{lem:Ct12}
implies the existence of $z\in\mathfrak t_3$ and $z_0\in\mathbf k$ such that $x-x^{14,2,3,\emptyset}=z^{1,23,4}+z_0t_{23}$. 
Hence $x=z^{1,23,4}+x^{14,2,3,\emptyset}+z_0t_{23}$. 
If one sets $w:=x^{1,2,3,\emptyset}+z_0t_{23}$, one then obtains  
$$
x=z^{1,23,4}+w^{14,2,3},  
$$
where $w,z\in \mathfrak t_3$. 
Therefore $x\in \mathfrak t_3^{1,23,4}+\mathfrak t_3^{14,2,3}$. This implies \eqref{inclusion:ker:tt}.
\end{proof}

\begin{lemma}\label{lem:act:grt}
(a) One has the inclusion $\mathfrak{grt}_1\subset\mathfrak G_{\inert}$. 

(b) For any $C\in\mathfrak{grt}_1$, there is a unique derivation $D$ of $\mathfrak t_4$ such that\footnote{We use the shorthand 
notation $t_{i_1...i_s,j}=t_{j,i_1...i_s}:=t_{i_1j}+...+t_{i_sj}$ for any distinct $i_1,...,i_s,j$.} 
\begin{align}\label{formulas:D}
& D : t_{12}\mapsto 0,\quad t_{23}\mapsto[C(t_{32},t_{21}),t_{23}],\quad t_{13}\mapsto [C(t_{31},t_{12}),t_{13}], \quad 
\\ & \nonumber
t_{14}\mapsto [C(t_{41},t_{12})+C(t_{4,12},t_{12,3}),t_{14}],\, 
t_{24}\mapsto [C(t_{42},t_{21})+C(t_{4,21},t_{21,3}),t_{24}],\, t_{34}\mapsto [C(t_{43},t_{3,12}),t_{34}]. 
\end{align}
\end{lemma}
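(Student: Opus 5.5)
For (a), I will exhibit $h_C$ explicitly from Drinfeld's relations. For $C\in\mathfrak{grt}_1$ the hexagon relation gives $C(e_0,e_1)+C(e_1,e_\infty)+C(e_\infty,e_0)=0$, and the further relation $[e_1,C(e_0,e_1)]+[e_\infty,C(e_0,e_\infty)]=0$ holds. Combining this with the 2-cycle relation $C(e_0,e_1)=-C(e_1,e_0)$, I will rewrite it as $[C(e_1,e_0),e_1]+[C(e_\infty,e_0),e_\infty]=0$. Exchanging the roles of $e_0$ and $e_1$ (this fixes $e_\infty$) and using the 2-cycle relation once more gives the form $[C,e_0]+[h,e_\infty]=0$ with $h:=C(e_\infty,e_1)$ (up to sign conventions, which I will fix by direct check). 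I expect $h_C=C(e_\infty,e_1)$, or $-C(e_1,e_\infty)$, which lies in $\mathfrak G$ since the degree is preserved. So (a) is a short manipulation of the defining relations.

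For (b), uniqueness is immediate, since the $t_{ij}$ generate $\mathfrak t_4$. For existence, I must show that the assignment \eqref{formulas:D} respects the defining relations of $\mathfrak t_4$. The relations are:
\begin{itemize}
\item the four ``triangle'' relations, which are equivalent to the centrality of $z^{(ijk)}:=t_{ij}+t_{ik}+t_{jk}$ in $\mathfrak t^{ijk}$;
\item the commutations $[t_{12},t_{34}]=[t_{13},t_{24}]=[t_{14},t_{23}]=0$.
\end{itemize}

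Rather than checking these by brute force, I will use the pentagon. Define $\tilde D:=D_C^{1,2,3}$-type derivations. Concretely, the formulas say that $D$ restricted to $\mathfrak t_3^{1,2,3}$ is $D_C$ transported by $x\mapsto x^{1,2,3}$, after rewriting $h_C(t_{23},t_{12})$ via (a) as $C(t_{31},t_{12})$. Likewise, on generators coming from $\mathfrak t_3^{12,3,4}$ (namely $t_{12}$, $t_{34}$ and $t_{4,12}$), $D$ should agree with the transport of $D_C$, up to an inner derivation.

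The key steps are as follows.

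\textbf{Step 1.} I will check that \eqref{formulas:D} defines a derivation of the free Lie algebra on the $t_{ij}$ that sends the relation elements into the ideal of relations. For the triangle $\{1,2,3\}$ this is Lem. \ref{lem:def:DC}, since $D|_{\{t_{12},t_{13},t_{23}\}}$ is $D_C$ placed in $\mathfrak t_3^{1,2,3}$.

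\textbf{Step 2.} For the triangles involving index $4$ and for the commutation relations, I will use the pentagon $\mathrm{pent}(C)=0$, written as
\[
C^{1,2,34}+C^{12,3,4}=C^{2,3,4}+C^{1,23,4}+C^{1,2,3},
\]
together with the hexagon, to express $C(t_{41},t_{12})+C(t_{4,12},t_{12,3})$ in alternative forms. For instance, by the hexagon in the three points $4$, $12$, $3$ one gets
\[
C(t_{4,12},t_{12,3})=-C(t_{12,3},t_{34})-C(t_{34},t_{4,12}).
\]
The idea is to show that $D$ coincides with $\mathrm{Ad}$-twisted versions of the Drinfeld action $t_{ij}\mapsto$ (the action of $\mathfrak{grt}_1$ on $\mathfrak t_n$ by derivations, known from \cite{Dr}). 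That is, $D=D^{Dr}_C+\mathrm{ad}_Y$ for an explicit $Y$ built from $C^{1,2,3}$ and related terms. Since Drinfeld's derivation is known to exist on $\mathfrak t_4$, and $\mathrm{ad}_Y$ is a derivation, this gives existence directly.

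\textbf{Step 3.} To identify $D$, I will recall that Drinfeld's $\mathfrak{grt}_1$ acts on $\mathfrak t_n$ via $t_{ij}\mapsto[\ \cdot\ ,t_{ij}]$ with coefficients given by values of $C$ on suitable pairs. I will then verify generator by generator that $D(t_{ij})$ and $(D^{Dr}_C+\mathrm{ad}_Y)(t_{ij})$ agree. Each such check uses the centralizer description of Lem. \ref{lem:Ct12}: an element of the form $[u,t_{ij}]$ determines $u$ only modulo $\mathrm C_{\mathfrak t_4}(t_{ij})=\mathbf kt_{ij}+\mathfrak t_3^{\phi}$. So I will only need equalities of coefficients modulo that centralizer, and these should follow from the pentagon and hexagon.

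The main obstacle will be Step 2/3 for $t_{14}$ and $t_{24}$. There the coefficient is a sum of two $C$-values, and matching it requires the full pentagon identity evaluated in $\mathfrak t_4$ and reduced modulo $\mathrm C_{\mathfrak t_4}(t_{14})$. If identifying with Drinfeld's action proves awkward, the fallback is a direct verification of the relations $[t_{14}+t_{12},t_{24}]=0$-type and $[t_{14},t_{23}]=0$. In that verification I would write $C(t_{41},t_{12})+C(t_{4,12},t_{12,3})$ as $C^{1,2,4}$ plus $C^{12,3,4}$-terms, and use that $C^{12,3,4}$ commutes with $t_{12}$ and $t_{34}$.
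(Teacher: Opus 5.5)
Your part (a), the uniqueness claim and Step 1 are fine. In (a), the swap $e_0\leftrightarrow e_1$ alone turns $[C(e_1,e_0),e_1]+[C(e_\infty,e_0),e_\infty]=0$ into $[C(e_0,e_1),e_0]+[C(e_\infty,e_1),e_\infty]=0$, so $h_C=C(e_\infty,e_1)$, as in the paper.

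The gap is the existence part of (b) for every relation involving the index $4$, which is the real content of the lemma. Your main route posits a derivation $D^{Dr}_C$ of $\mathfrak t_4$ and an element $Y$, but never writes either down. It then says the generator-by-generator matching ``should follow from the pentagon and hexagon''; that restates the problem rather than solving it. The natural candidate for $D^{Dr}_C$ is the action of $\mathfrak{grt}_1$ on $\mathbf{PaCD}$ at the object $((\bullet\bullet)\bullet)\bullet$, and that is $D$ itself (see the Remark after the lemma). So the proposed identification only moves the verification into an unwinding of that action. Your fallback also contains a false step. $C(t_{4,12},t_{12,3})=C(t_{13},t_{12})^{12,3,4}$ commutes with $t_{12}$ but not with $t_{34}$: since $x\mapsto x^{12,3,4}$ is injective, that would force $C(t_{13},t_{12})$ to centralize $t_{23}$ in $\mathfrak t_3$, and by Lem. \ref{lem:Ct12} this gives $C=0$.

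What must be supplied is the direct check the paper performs.
\begin{enumerate}
\item Use the equivalent presentation of $\mathfrak t_4$:
  \begin{enumerate}
  \item[(i)] $z_4$ is central;
  \item[(ii)] $t_{12}+t_{13}+t_{23}$ commutes with each of its summands;
  \item[(iii)] $t_{12}+t_{14}+t_{24}$ commutes with each of its summands;
  \item[(iv)] $[t_{13},t_{24}]=0$;
  \item[(v)] $[t_{14},t_{23}]=0$.
  \end{enumerate}
  This removes the triangles $\{1,3,4\}$ and $\{2,3,4\}$ entirely.
\item Transport (a) to the identity $[t_{23},C(t_{32},t_{21})]+[t_{13},C(t_{31},t_{12})]=0$ in $\mathfrak t_3$.
  \begin{itemize}
  \item Its image under $x\mapsto x^{1,2,3}$ gives (ii).
  \item Its image under $x\mapsto x^{1,2,4}$, together with $[t_{12},C(t_{4,12},t_{12,3})]=0$, gives $D(t_{12}+t_{14}+t_{24})=[C(t_{4,12},t_{12,3}),t_{12}+t_{14}+t_{24}]$. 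Then (iii) follows, because $D(t_{ij})-[C(t_{4,12},t_{12,3}),t_{ij}]$ lies in $\langle t_{12},t_{14},t_{24}\rangle$ for $ij\in\{12,14,24\}$.
  \item Its image under $x\mapsto x^{3,12,4}$ gives $D(z_4)=0$, which is (i).
  \end{itemize}
  So the formulas for $t_{14},t_{24}$, which you flagged as the main obstacle, are handled by inertia alone.
\item The pentagon enters only in (iv) and (v). By Jacobi and the 2-cycle relation, $[D(t_{13}),t_{24}]+[t_{13},D(t_{24})]$ equals $[[C(t_{31},t_{12})+C(t_{12},t_{24})+C(t_{3,12},t_{12,4}),t_{13}],t_{24}]$. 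The pentagon in the permuted form $C(t_{31},t_{12})+C(t_{12},t_{24})+C(t_{3,12},t_{12,4})=C(t_{31,2},t_{24})+C(t_{31},t_{1,24})$ kills this, since $C(t_{31,2},t_{24})$ commutes with $t_{13}$ and $C(t_{31},t_{1,24})$ commutes with $t_{24}$. Relation (v) reduces in the same way to the image of this identity under $3\leftrightarrow 4$.
\end{enumerate}
The hexagon relation is never needed in (b).
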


\begin{proof}
(a) Let $C\in\mathfrak{grt}_1$. By relation (5.19) in \cite{Dr}, $C$ satisfies $[e_0,C(e_1,e_0)]+[e_\infty,C(e_1,e_\infty)]=0$. 
By relation (5.19) in \cite{Dr}, $C$ satisfies the 2-cycle relation $C(y,x)=-C(x,y)$, which implies 
both $C(e_1,e_0)=-C(e_0,e_1)$ and $C(e_1,e_\infty)=-C(e_0,e_\infty)$. One then obtains
\begin{equation}\label{inert:C}
    [e_0,C(e_0,e_1)]+[e_\infty,C(e_\infty,e_1)]=0, 
\end{equation} 
therefore $C\in \mathfrak G_{\inert}$ (with $h_C=C(e_\infty,e_1)$). 

(b) Observe that the injection $\mathfrak{lie}(e_0,e_1)\to \mathfrak t_3$ takes
$C(e_\infty,e_1)$ to $C(-t_{2,13},t_{12})$, which is equal to $C(t_{31},t_{12})$ as $C$ is a sum of terms of degrees $>1$
and by the centrality of $t_{2,13}+t_{13}$; taking the image of \eqref{inert:C} by this injection and using again the centrality of 
$t_{2,13}+t_{13}$, one obtains  
\begin{equation}\label{inert:C:in:t3}
    [t_{23},C(t_{32},t_{21})]+[t_{13},C(t_{31},t_{12})]=0 
\end{equation} 
(equality in $\mathfrak t_3$).

One can show that a presentation of $\mathfrak t_4$ is as follows: (i) $\sum_{i,j : 1\leq i<j\leq 4}t_{ij}$ is central, 
(ii) the sum $t_{12}+t_{13}+t_{23}$ commutes with each of its summands, (iii) the sum $t_{12}+t_{14}+t_{24}$ 
commutes with each of its summands, (iv) $[t_{13},t_{24}]=0$, (v) $[t_{14},t_{23}]=0$. 

Let us denote by $D(t_{ij})$ the images of $t_{ij}$ indicated in the statement. 

One has $D(t_{12})+D(t_{13})+D(t_{23})=[C(t_{32},t_{21}),t_{23}]+[C(t_{31},t_{12}),t_{13}]=0$ by virtue of 
the image of \eqref{inert:C:in:t3} by the morphism $\mathfrak t_3\to \mathfrak t_4$, $x\mapsto x^{1,2,3}$, therefore
\begin{equation}\label{sum:im:D}
D(t_{12})+D(t_{13})+D(t_{23})=0.     
\end{equation} 
Then for $i\neq j\in\{1,2,3\}$, one has 
$$
[t_{12}+t_{13}+t_{23},D(t_{ij})]+[D(t_{12})+D(t_{13})+D(t_{23}),t_{ij}]=0,  
$$
where the first term vanishes since $D(t_{ij})$ belongs to $\langle t_{12},t_{13},t_{23}\rangle$ and 
$t_{12}+t_{13}+t_{23}$ is central in this Lie algebra, and the second term vanishes by \eqref{sum:im:D}.
This implies that (ii) is preserved. 

One has 
\begin{align}\label{im:D:t124}
& D(t_{12})+D(t_{14})+D(t_{24})=[C(t_{41},t_{12})+C(t_{4,12},t_{12,3}),t_{14}]+[C(t_{42},t_{21})+C(t_{4,21},t_{21,3}),t_{24}]
\\ \nonumber
& =[C(t_{41},t_{12}),t_{14}]+[C(t_{42},t_{21}),t_{24}]+[C(t_{4,12},t_{12,3}),t_{12,4}]
=[C(t_{4,12},t_{12,3}),t_{12}+t_{14}+t_{24}], 
\end{align}
where the second equality follows from $t_{4,21}=t_{4,12}$ and 
$t_{21,3}=t_{12,3}$, and the third equality follows from the commutation of $t_{12}$ with 
$t_{4,12}$ and $t_{12,3}$ and from the image of \eqref{inert:C:in:t3} by the morphism 
$\mathfrak t_3\to \mathfrak t_4$, given by $x\mapsto x^{1,2,4}$. Then 
\begin{align*}
&[D(t_{12})+D(t_{14})+D(t_{24}),t_{ij}]+[t_{12}+t_{14}+t_{24},D(t_{ij})]
\\& =[[C(t_{4,12},t_{12,3}),t_{12}+t_{14}+t_{24}],t_{ij}]+[t_{12}+t_{14}+t_{24},D(t_{ij})]
\\ & =[[C(t_{4,12},t_{12,3}),t_{ij}],t_{12}+t_{14}+t_{24}]+[t_{12}+t_{14}+t_{24},D(t_{ij})]
\\ & = [t_{12}+t_{14}+t_{24},D(t_{ij})-[C(t_{4,12},t_{12,3}),t_{ij}]]=0,  
\end{align*}
where the first equality follows from \eqref{im:D:t124}, the second equality follows from the 
centrality of $t_{12}+t_{14}+t_{24}$ in $\langle t_{12},t_{24},t_{14}\rangle$, the third equality follows 
from the conjunction of the fact that for any $i\neq j\in\{1,2,4\}$, 
$D(t_{ij})-[C(t_{4,12},t_{12,3}),t_{ij}]\in\langle t_{12},t_{24},t_{14}\rangle$ and of the 
centrality of $t_{12}+t_{14}+t_{24}$ in $\langle t_{12},t_{24},t_{14}\rangle$.  
It follows that (iii) is preserved. 

One computes
\begin{align*}
    &\textstyle\sum_{i,j : 1\leq i<j\leq 4}D(t_{ij})
    =D(t_{12})+D(t_{14})+D(t_{24})+D(t_{34})
    \\ & =[C(t_{4,12},t_{12,3}),t_{12}+t_{14}+t_{24}]+[C(t_{43},t_{3,12}),t_{34}]
    \\ & =[C(t_{4,12},t_{12,3}),t_{12,4}]+[C(t_{43},t_{3,12}),t_{34}] =0
\end{align*}
where the first equality follows from \eqref{sum:im:D} and $D(t_{12})=0$, the second equality follows from 
\eqref{im:D:t124}, the third equality follows from the commutation of $t_{12}$ with 
$t_{4,12}$ and $t_{12,3}$, and the last equality follows from the image of \eqref{inert:C:in:t3} by the morphism
$\mathfrak t_3\to \mathfrak t_4$ given by $x\mapsto x^{3,12,4}$.
Therefore 
\begin{equation}\label{van:im:center}
\textstyle    \sum_{i,j : 1\leq i<j\leq 4}D(t_{ij})=0, 
\end{equation} 
which implies that (i) is preserved. 

One computes 
\begin{align*}
    & [D(t_{13}),t_{24}]+[t_{13},D(t_{24})]
=[[C(t_{31},t_{12}),t_{13}],t_{24}]+[t_{13},[C(t_{42},t_{21})+C(t_{4,21},t_{21,3}),t_{24}]]
\\ & =[[C(t_{31},t_{12})-C(t_{42},t_{21})-C(t_{4,21},t_{21,3}),t_{13}],t_{24}]
=[[C(t_{31},t_{12})+C(t_{12},t_{24})+C(t_{3,12},t_{12,4}),t_{13}],t_{24}]
\\ & =[[C(t_{31,2},t_{24})+C(t_{31},t_{1,24}),t_{13}],t_{24}]=
[[C(t_{31,2},t_{24}),t_{13}],t_{24}]+[[C(t_{31},t_{1,24}),t_{24}],t_{13}]=0
\end{align*}
where the second and fifth equalities follow from the commutation relation $[t_{13},t_{24}]=0$, the third equality
follows from $C(y,x)=-C(x,y)$, which is the 2-cycle equation (5.17) in \cite{Dr}, 
the fourth equality follows from the pentagon relation (5.20) in \cite{Dr}, 
and the last equality follows from the relations $[t_{31,2},t_{13}]=[t_{24},t_{13}]=0$ and 
$[t_{31},t_{24}]=[t_{1,24},t_{24}]=0$.  Therefore $[D(t_{13}),t_{24}]+[t_{13},D(t_{24})]=0$, which implies that (iv) is preserved. 

Similarly, one computes 
\begin{align*}
    & [D(t_{14}),t_{23}]+[t_{14},D(t_{23})]
=[[C(t_{41},t_{12})+C(t_{4,12},t_{12,3}),t_{14}],t_{23}]+[t_{14},[C(t_{32},t_{21}),t_{23}]]
\\ & =[[C(t_{41},t_{12})+C(t_{4,12},t_{12,3})-C(t_{32},t_{21}),t_{14}],t_{23}]
=[[C(t_{41},t_{12})+C(t_{4,12},t_{12,3})+C(t_{12},t_{23}),t_{14}],t_{23}]
\\ & =[[C(t_{41,2},t_{23})+C(t_{41},t_{1,23}),t_{14}],t_{23}]=
[[C(t_{41,2},t_{23}),t_{14}],t_{23}]+[[C(t_{41},t_{1,23}),t_{23}],t_{14}]=0
\end{align*}
where the second and fifth equalities follow from the commutation relation $[t_{23},t_{14}]=0$, the third equality
follows from the 2-cycle identity $C(y,x)=-C(x,y)$, the fourth equality follows from the pentagon relation, 
and the last equality follows from the relations $[t_{41,2},t_{14}]=[t_{23},t_{14}]=0$ and 
$[t_{41},t_{23}]=[t_{1,23},t_{23}]=0$.  Therefore $[D(t_{14}),t_{23}]+[t_{14},D(t_{23})]=0$, which implies that (v) is preserved. 

Since the assignment $t_{ij}\mapsto D(t_{ij})$ preserves all the relations of $\mathfrak t_4$, it induces a derivation of
this Lie algebra. 
\end{proof}

\begin{remark}
One can check the action of Lem. \ref{lem:act:grt} to coincide with the one arising from 
the action of $\mathfrak{grt}_1$ on $\mathbf{PaCD}$ in \cite{BN}, corresponding to the object
$((\bullet\bullet)\bullet)\bullet$. 
\end{remark}

\subsection{Characterization of $\mathfrak{grt}_1$ in terms of the Lie algebras $\mathfrak{t}_3$ and $\mathfrak{t}_4$}
\label{sect:1:4}

\begin{theorem}\label{thm:main}
The subset $\mathfrak{grt}_1\subset\mathfrak G$ is equal to the subset of $\mathfrak G_\inert$ of all elements  
$C$ such that there exists $D\in\mathfrak{der}(\mathfrak t_4)$ and $X\in\mathfrak t_4$, 
satisfying 

(a) the morphism $\mu_{123}:\mathfrak t_3\to\mathfrak t_4,x\mapsto x^{1,2,3}$ intertwines $D_C$ and $D$
(see Lem. \ref{lem:def:DC}), and 

(b) the morphism $\mu_{124}:\mathfrak t_3\to\mathfrak t_4,x\mapsto x^{1,2,4}$ intertwines $D_C$ and $D+\mathrm{ad}_X$. 
\end{theorem}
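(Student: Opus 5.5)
We prove both inclusions separately.

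\emph{Inclusion $\mathfrak{grt}_1\subset$ right-hand side.} Let $C\in\mathfrak{grt}_1$. By Lem.~\ref{lem:act:grt}(a), $C\in\mathfrak G_\inert$ with $h_C=C(e_\infty,e_1)$. Then $D_C$ is given by $t_{12}\mapsto0$, $t_{23}\mapsto[C(t_{23},t_{12}),t_{23}]$ and $t_{13}\mapsto[C(t_{31},t_{12}),t_{13}]$, using the image of $C(e_\infty,e_1)$ computed in the proof of Lem.~\ref{lem:act:grt}(b).

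Take $D$ to be the derivation of Lem.~\ref{lem:act:grt}(b). Condition (a) then reduces to checking $D(t_{ij})=D_C(t_{ij})^{1,2,3}$ on the generators $t_{12},t_{13},t_{23}$. This is immediate from \eqref{formulas:D}, using the 2-cycle relation where needed to write $C(t_{32},t_{21})$ in the form $C(t_{23},t_{12})$.

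For (b), compare $D(t_{14}),D(t_{24}),D(t_{12})$ with $D_C(t_{13})^{1,2,4},D_C(t_{23})^{1,2,4},0$. The differences are $[C(t_{4,12},t_{12,3}),t_{14}]$ and $[C(t_{4,12},t_{12,3}),t_{24}]$, together with $0=[C(t_{4,12},t_{12,3}),t_{12}]$, because $t_{12}$ commutes with $t_{4,12}$ and $t_{12,3}$. Hence $D-\mathrm{ad}_{C(t_{4,12},t_{12,3})}$ agrees with $\mu_{124}\circ D_C$ on generators, so (b) holds with $X=-C(t_{4,12},t_{12,3})$ (up to sign conventions).

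\emph{Inclusion right-hand side $\subset\mathfrak{grt}_1$.} Let $C\in\mathfrak G_\inert$, $D$ and $X$ satisfy (a) and (b). The goal is to show $\mathrm{pent}(C)=0$ and then apply Thm.~\ref{thm:hf}: this gives $C\in\mathfrak{grt}_1+\mathbf k[e_0,e_1]$, and the degree-2 component $\lambda[e_0,e_1]$ must be shown to vanish separately, presumably using the inertia condition together with (a), (b).

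From (a), $D(t_{12})=0$, $D(t_{13})=[h_C(t_{23},t_{12})^{1,2,3},t_{13}]$ and $D(t_{23})=[C(t_{23},t_{12}),t_{23}]$. From (b), $(D+\mathrm{ad}_X)(t_{12})=0$ gives $[X,t_{12}]=0$, so by Lem.~\ref{lem:Ct12} we have $X\in\mathbf kt_{12}+\mathfrak t_3^{12,3,4}$. Also, (b) determines $D(t_{14})$ and $D(t_{24})$ modulo $\mathrm{ad}_X$. The remaining unknown is $D(t_{34})$, which is constrained by the centrality of $z_4$: $D(t_{34})=-\sum_{(ij)\neq(34)}D(t_{ij})$.

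The key step is to impose the relation $[t_{14},t_{23}]=0$, i.e.\ $[D(t_{14}),t_{23}]+[t_{14},D(t_{23})]=0$. This gives an equation of the form $[[E,t_{14}],t_{23}]=0$ with $E:=h_C(t_{24},t_{12})+X'-C(t_{23},t_{12})$, where $X'$ is the $\mathfrak t_3^{12,3,4}$-part of $X$, written as $X'=Y^{12,3,4}$. By Lem.~\ref{lem:kertt}, $E\in\mathfrak t_3^{1,23,4}+\mathfrak t_3^{14,2,3}$.

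Similarly, the relation $[t_{13},t_{24}]=0$ yields $E'\in\mathfrak t_3^{1,24,3}+\mathfrak t_3^{13,2,4}$ by the analogue of Lem.~\ref{lem:kertt} under the permutation $3\leftrightarrow4$.

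From these decompositions we aim to extract the pentagon. Applying strand-deletion morphisms such as $x\mapsto x^{1,2,3,\emptyset}$ and their analogues isolates the components, and pins down $Y$ in terms of $C$, namely $Y=C(t_{4,12},t_{12,3})$-type expressions. Substituting back should produce $\mathrm{pent}(C)\in[t_{14},\cdot]$-type kernels, and ultimately $\mathrm{pent}(C)=0$. We expect the 2-cycle and hexagon consequences to come along the way, as in \cite{F1}.

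\emph{Main obstacle.} The hardest step is this last extraction: showing that the constraints from Lem.~\ref{lem:kertt}, combined with deletions, force exactly $\mathrm{pent}(C)=0$ rather than a weaker condition. We would first try to arrange the identity so that it reads $[[\mathrm{pent}(C)^{\sigma},t_{14}],t_{23}]=0$ for a suitable relabelling $\sigma$. We would then show that the component in $\mathfrak t_3^{1,23,4}+\mathfrak t_3^{14,2,3}$ vanishes by applying the deletions of strands $1$, $4$ and $2$, $3$, under which $\mathrm{pent}(C)$ maps to $0$ since $C$ has degree $>1$.
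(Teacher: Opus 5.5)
Your first inclusion is correct and is the paper's argument. Your sign $X=-C(t_{4,12},t_{12,3})=C(t_{3,12},t_{12,4})$ is the one consistent with $\mathrm{ad}_X=[X,-]$ and with the formulas the paper displays for $D+\mathrm{ad}_X$.

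The reverse inclusion, however, is not proved. The step that produces the pentagon is only announced (``we aim to'', ``should produce''), and the route you propose for it cannot be set up as stated. The element $E$ in your condition $[[E,t_{14}],t_{23}]=0$ contains two unknowns, $X$ and the inertia partner $h_C$. So you cannot ``arrange'' it into $[[\mathrm{pent}(C)^\sigma,t_{14}],t_{23}]=0$ before both have been expressed explicitly through $C$. You only speak of pinning down $Y$. The explicit determination of $h_C$ in terms of $C$ is absent, and without it the constraint cannot be rewritten in terms of $C$ alone.

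The missing idea is to turn the two structural facts into a single identity in $\mathfrak t_4$ and read it off strand by strand.
After reducing to degree $>1$, write $X=\lambda^{12,3,4}$ (Lem.~\ref{lem:Ct12}). By Lem.~\ref{lem:kertt}, $C(t_{32},t_{21})-h_C(t_{42},t_{21})+X=\gamma^{23,1,4}+\delta^{14,2,3}$ with $\gamma,\delta\in\mathfrak t_3$.

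Now apply the four deletions $x\mapsto x^{1,2,3,\emptyset}$, $x^{\emptyset,1,2,3}$, $x^{2,1,\emptyset,3}$, $x^{2,\emptyset,1,3}$. In each case all but two terms vanish, because they factor through $\mathfrak t_2$ or through a one-generated subalgebra, and everything lives in degree $>1$. This yields $\delta=C(t_{32},t_{21})$, $\lambda=\delta^{3,1,2}$, $\gamma=-h_C(t_{31},t_{12})$ and $\gamma=\lambda^{2,1,3}$. Hence $\lambda=C(t_{21},t_{13})$, $\gamma=C(t_{12},t_{23})$ and $h_C(t_{12},t_{23})=-C(t_{23},t_{31})$.

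Substituting back gives directly $C(t_{32},t_{21})+C(t_{21},t_{14})=C(t_{23,1},t_{14})+C(t_{32},t_{2,14})-C(t_{3,12},t_{12,4})$. This is $\mathrm{pent}(C)=0$ after the relabelling $x\mapsto x^{2,3,4,1}$. No second kernel argument is needed, and the relation $[t_{13},t_{24}]=0$ (your $E'$) is not needed either.

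For the final step, the inertia condition alone suffices. Since $\mathfrak{grt}_1\subset\mathfrak G_\inert$, you get $c_0[e_0,e_1]\in\mathfrak G_\inert$, and a degree-$3$ computation shows this forces $c_0=0$.
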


It will be shown in the course of the proof that for any $(C,D,X)$ as in this statement, one has 
$X \in \mathfrak t_3^{12,3,4}\subset \mathfrak t_4$. 

\begin{proof}
Let $\mathfrak X$ be the subset of $\mathfrak G_\inert$ defined in the statement. 

Let us show the inclusion 
\begin{equation}\label{incl:grt:i}
\mathfrak{grt}_1\subset\mathfrak X. 
\end{equation}
Let $C\in \mathfrak{grt}_1$. By Lem. \ref{lem:act:grt}(a), $C\in\mathfrak G_\inert$. Let $D$ be the derivation of 
$\mathfrak t_4$ induced by $C$ as in Lem. \ref{lem:act:grt}(b), and let $X:=C(t_{4,12},t_{12,3})$. 
Let us check that $(D,X)$ satisfies conditions (a) and (b) from the statement. It follows from 
\eqref{inert:C:in:t3}, from Lem. \ref{lem:def:DC} and from the 2-cycle identity $C(y,x)=-C(x,y)$ 
that $D_C$ is given by 
\begin{equation}\label{assignt:DC}
t_{12}\mapsto 0, \quad t_{23}\mapsto [C(t_{32},t_{21}),t_{23}],\quad t_{13}\mapsto [C(t_{31},t_{12}),t_{13}]. 
\end{equation}
Let us check that condition (a) is satisfied. One has 
\begin{align*}
& D(t_{12}^{1,2,3})=D(t_{12})=0=D_C(t_{12})^{1,2,3}, 
\\& D(t_{23}^{1,2,3})=D(t_{23})=[C(t_{32},t_{21}),t_{23}]=[C(t_{32},t_{21}),t_{23}]^{1,2,3}=D_C(t_{23})^{1,2,3},
\\ & D(t_{13}^{1,2,3})=D(t_{13})=[C(t_{31},t_{12}),t_{13}]=[C(t_{31},t_{12}),t_{13}]^{1,2,3}=D_C(t_{13})^{1,2,3}
\end{align*}
by \eqref{assignt:DC} and \eqref{formulas:D}.
Both maps $x\mapsto D(x^{1,2,3})$ and $x\mapsto D_C(x)^{1,2,3}$ are derivations of $\mathfrak t_3$ with values in 
$\mathfrak t_4$ with respect to the Lie algebra morphism $x\mapsto x^{1,2,3}$. Since they coincide on a 
generating set of $\mathfrak t_3$, they are equal. 
This implies the identity $\forall x\in \mathfrak t_3,D(x^{1,2,3})=D_C(x)^{1,2,3}$, therefore 
the condition (a) is satisfied. 

On the other hand, since $X$ commutes with $t_{12}$, $D+\mathrm{ad}_X$ is such that 
\begin{align*}
& D+\mathrm{ad}_X : t_{12}\mapsto 0,\quad t_{24}\mapsto [C(t_{42},t_{21}),t_{24}], t_{14}\mapsto [C(t_{41},t_{12}),t_{14}], 
\\ & \nonumber
t_{23}\mapsto[C(t_{32},t_{21})+C(t_{3,12},t_{12,4}),t_{23}],\quad t_{13}\mapsto [C(t_{31},t_{12})+C(t_{3,12},t_{12,4}),t_{13}], 
\\ & \nonumber t_{34}\mapsto [C(t_{43},t_{3,12})+C(t_{3,12},t_{12,4}),t_{34}]. 
\end{align*}
Let us check that condition (b) is satisfied. One has 
\begin{align*}
& (D+\mathrm{ad}_X)(t_{12}^{1,2,4})=(D+\mathrm{ad}_X)(t_{12})=0=D_C(t_{12})^{1,2,4}, 
\\& (D+\mathrm{ad}_X)(t_{23}^{1,2,4})=(D+\mathrm{ad}_X)(t_{24})=[C(t_{42},t_{21}),t_{24}]
=[C(t_{32},t_{21}),t_{23}]^{1,2,4}=D_C(t_{23})^{1,2,4},
\\ & (D+\mathrm{ad}_X)(t_{13}^{1,2,4})=(D+\mathrm{ad}_X)(t_{14})=[C(t_{41},t_{12}),t_{14}]
=[C(t_{31},t_{12}),t_{13}]^{1,2,4}=D_C(t_{13})^{1,2,4}
\end{align*}
by \eqref{assignt:DC} and \eqref{formulas:D}.
Both maps $x\mapsto (D+\mathrm{ad}_X)(x^{1,2,4})$ and $x\mapsto D_C(x)^{1,2,4}$ are derivations of $\mathfrak t_3$ with values in 
$\mathfrak t_4$ with respect to the Lie algebra morphism $x\mapsto x^{1,2,4}$. Since they coincide on a 
generating set of $\mathfrak t_3$, they are equal. 
This implies the identity $\forall x\in \mathfrak t_3,(D+\mathrm{ad}_X)(x^{1,2,4})=D_C(x)^{1,2,4}$, therefore 
the condition (b) is satisfied. It follows that $C\in\mathfrak X$. This implies \eqref{incl:grt:i}. 

Let us now show the opposite inclusion, i.e. 
\begin{equation}\label{incl:i:ii}
\mathfrak X\subset \mathfrak{grt}_1. 
\end{equation}
Let $C\in\mathfrak X$. Let $D,X$ be such that conditions (a) and (b) are satisfied.  
The condition (a) is expressed as the identity $\forall x\in \mathfrak t_3,D(x^{1,2,3})=D_C(x)^{1,2,3}$, which 
implies the relations 
\begin{equation}\label{eq:1}
    D(t_{12})=0,D(t_{23})=[C(t_{32},t_{21}),t_{23}],D(t_{13})=[h_C(t_{32},t_{21}),t_{13}]. 
\end{equation}
On the other hand, the condition (b) is expressed as the identity $\forall x\in \mathfrak t_3,D(x^{1,2,4})+[X,x^{1,2,4}]=D_C(x)^{1,2,4}$, 
which implies the relations
\begin{equation}\label{eq:2}
 D(t_{12})+[X,t_{12}]=0,D(t_{24})+[X,t_{24}]=[C(t_{42},t_{21}),t_{24}],D(t_{14})+[X,t_{14}]=[h_C(t_{42},t_{21}),t_{14}]. 
\end{equation}
The combination of the first equalities in \eqref{eq:1} and \eqref{eq:2} implies 
\begin{equation}\label{eq:comm:X:t12}
[X,t_{12}]=0. 
\end{equation}
The two last equalities of \eqref{eq:2} imply 
\begin{equation}\label{eq:3}
    D(t_{24})=[C(t_{42},t_{21})-X,t_{24}],D(t_{14})=[h_C(t_{42},t_{21})-X,t_{14}]. 
\end{equation}
Recall that $[t_{14},t_{23}]=0$, which by the derivation property of $D$ implies $[D(t_{14}),t_{23}]+[t_{14},D(t_{23})]=0$. 
Combining this with \eqref{eq:1} and \eqref{eq:3}, one obtains 
\begin{equation}\label{eq:5}
    [[h_C(t_{42},t_{21}),t_{14}],t_{23}]+[t_{14},[C(t_{32},t_{21}),t_{23}]]=[[X,t_{14}],t_{23}]. 
\end{equation}
Similarly, the relation $[t_{13},t_{24}]=0$ implies $[D(t_{13}),t_{24}]+[t_{13},D(t_{24})]=0$, which
together with \eqref{eq:1} and \eqref{eq:3} yields 
\begin{equation*}
    [[h_C(t_{32},t_{21}),t_{13}],t_{24}]+[t_{13},[C(t_{42},t_{21}),t_{24}]]=[t_{13},[X,t_{24}]]. 
\end{equation*}
Applying the automorphism $x\mapsto x^{1,2,4,3}$ of $\mathfrak t_4$ to this equality, one obtains  
\begin{equation}\label{eq:6}
    [[h_C(t_{42},t_{21}),t_{14}],t_{23}]+[t_{14},[C(t_{32},t_{21}),t_{23}]]=[t_{14},[X^{1,2,4,3},t_{23}]]. 
\end{equation}
The difference of \eqref{eq:5} and \eqref{eq:6}, combined with the commutation relation $[t_{14},t_{23}]=0$, yields the equality
\begin{equation*}\label{eq:comm:XX:tt}
    [[X+X^{1,2,4,3},t_{14}],t_{23}]=0. 
\end{equation*}
The commutation relation $[t_{14},t_{23}]=0$, combined with \eqref{eq:5}, also 
yields the equality 
\begin{equation}\label{ChX:in:kertt}
[t_{14},[t_{23},C(t_{32},t_{21})-h_C(t_{42},t_{21})+X]]=0. 
\end{equation}
% By \eqref{eq:comm:X:t12}, one has $[t^{12},X+X^{1,2,4,3}]=0$, which together with \eqref{eq:comm:XX:tt}, 
% Lem. \ref{lem:int:Ct12:kertt} and the fact that $\mathfrak G$ lives in degree $>1$ implies 
% \begin{equation}\label{X+X43:belonging}
% X+X^{1,2,4,3}=0.
% \end{equation}
On the other hand, the combination of \eqref{eq:comm:X:t12} and Lem. \ref{lem:Ct12}, and 
the fact that $\mathfrak G$ lives in degree $>1$, implies the existence of 
$\lambda\in\mathfrak t_3$, such that 
\begin{equation}\label{eq:X}
X=\lambda^{12,3,4}.
\end{equation}
Combining \eqref{ChX:in:kertt} with Lem. \ref{lem:kertt}, one obtains the existence of $\gamma$ and 
$\delta$ in $\mathfrak t_3$, such that 
$$
C(t_{32},t_{21})-h_C(t_{42},t_{21})+X=\gamma^{23,1,4}+\delta^{14,2,3}, 
$$
which together with \eqref{eq:X} yields
\begin{equation}\label{main:equation:C:hC}
C(t_{32},t_{21})-h_C(t_{42},t_{21})
=\gamma^{23,1,4}+\delta^{14,2,3}-\lambda^{12,3,4}. 
\end{equation}
Since $C$ is a sum of terms of degree $>1$, the same is true of $h_C,\gamma$ and $\delta$; since on the other hand, 
$\mathfrak t_2$ is supported in degree 1, this implies that their images by any graded Lie algebra morphism 
from $\mathfrak{lie}(e_0,e_1)$ or $\mathfrak t_3$ to $\mathfrak t_2$ is zero. 

Applying to \eqref{main:equation:C:hC} the morphism $\mathfrak t_4\to\mathfrak t_3$, $x\mapsto x^{1,2,3,\emptyset}$, and using
$h_C(0,x)=\gamma^{1,2,\emptyset}=\delta^{1,2,\emptyset}=0$, one obtains
\begin{equation}\label{expr:delta:C}
\delta=C(t_{32},t_{21}). 
\end{equation}
Applying to \eqref{main:equation:C:hC} the morphism $\mathfrak t_4\to\mathfrak t_3$, $x\mapsto x^{\emptyset,1,2,3}$, and using 
$\gamma^{1,\emptyset,2}=0$, one obtains
$$
\lambda=\delta^{3,1,2},   
$$
which together with \eqref{expr:delta:C} yields
\begin{equation}\label{expr:lambda:C}
\lambda=C(t_{21},t_{13}). 
\end{equation}

Applying to \eqref{main:equation:C:hC} the morphism $\mathfrak t_4\to\mathfrak t_3$, $x\mapsto x^{2,1,\emptyset,3}$, and using $C(0,t_{12})=
\delta^{1,2,\emptyset}=\lambda^{1,2,\emptyset}=0$, one obtains
\begin{equation}\label{expr:gamma:hC}
\gamma=-h_C(t_{31},t_{12}). 
\end{equation}
Applying to \eqref{main:equation:C:hC} the morphism $\mathfrak t_4\to\mathfrak t_3$, $x\mapsto x^{2,\emptyset,1,3}$, and using 
$\delta^{1,\emptyset,2}=0$, one obtains
$$
\gamma=\lambda^{2,1,3} 
$$
which together with \eqref{expr:lambda:C} yields
\begin{equation}\label{expr:gamma:C}
\gamma=C(t_{12},t_{23}). 
\end{equation}
The combination of \eqref{expr:gamma:hC} and \eqref{expr:gamma:C} then gives, after applying $x\mapsto x^{2,3,1}$, the relation 
\begin{equation}\label{expr:hC:C}
    h_C(t_{12},t_{23})=-C(t_{23},t_{31}). 
\end{equation}
Plugging in relation \eqref{main:equation:C:hC} the relations \eqref{expr:hC:C}, \eqref{expr:gamma:C}, \eqref{expr:delta:C} and \eqref{expr:lambda:C}, 
one obtains 
$$
C(t_{32},t_{21})+C(t_{21},t_{14})
=C(t_{23,1},t_{14})+C(t_{32},t_{2,14})-C(t_{3,12},t_{12,4}). 
$$
which after applying the permutation automorphism $x\mapsto x^{2,3,4,1}$ of $\mathfrak t_4$ yields the pentagon relation
$$
C(t_{43},t_{32})+C(t_{32},t_{21})+C(t_{4,23},t_{23,1})
=C(t_{34,2},t_{21})+C(t_{43},t_{3,12}). 
$$
Therefore $C$ belongs to the kernel of $\mathrm{pent}\vert_{\mathfrak G}$. By Thm. \ref{thm:hf}, it follows that 
there exists $(c,c_0)\in\mathfrak{grt}_1\times\mathbf k$, such that $C=c+c_0[e_0,e_1]$. One has 
$C\in \mathfrak X\subset\mathfrak G_\inert$ (by the definition of $\mathfrak X$) and $c\in \mathfrak{grt}_1
\subset \mathfrak G_\inert$ (by Lem. \ref{lem:act:grt}(a)). Therefore $c_0[e_0,e_1]=C-c\in \mathfrak G_\inert$. 
Let then $h\in\mathfrak G$ be such that $c_0[[e_0,e_1],e_0]+[h,e_\infty]=0$. By the uniqueness of $h$, 
it is homogeneous of degree 2, therefore of the form $\alpha[e_0,e_1]$; then 
$c_0[[e_0,e_1],e_0]+\alpha[[e_0,e_1],e_\infty]=0$; since the degree 3 part of 
$\mathfrak G$ is a rank 2 free $\mathbf k$-module generated by  $[[e_0,e_1],e_0]$ and 
$[[e_0,e_1],e_\infty]=-[[e_0,e_1],e_0]-[[e_0,e_1],e_1]$, it follows that $\alpha=c_0=0$, hence 
$c_0=0$. Therefore $C=c\in\mathfrak{grt}_1$. This shows the inclusion \eqref{incl:i:ii}. 

The result follows from the combination of the inclusions \eqref{incl:i:ii} and \eqref{incl:grt:i}. 
\end{proof}

\subsection{Auxiliary results}

For $V$ a $\mathbb Z$-graded $\mathbf k$-module and $d\in\mathbb Z$, we define $V[d]$ to be the degree $d$ component of $V$, and for 
$S\subset \mathbb Z$, we define $V[S]$ to be the direct sum $\oplus_{d\in S}V[d]$. We write $V[>1]$ instead of $V[\{d|d>1\}]$. 

Denote by $Z(\mathfrak a)$ the center of a Lie algebra $\mathfrak a$. 

\begin{lemma}\label{lem:general}
Let $\pi :  \mathfrak a\to\mathfrak b$ be a surjective graded morphism of positively graded Lie algebras which is an 
isomorphism in degrees $>1$. Then: 

(a) $\mathrm{ker}\pi\subset Z(\mathfrak a)$; 

(b) $Z(\mathfrak a)$ is supported in degree $1$ if and only if so is $Z(\mathfrak b)$; 

(c) if the equivalent conditions of (b) are satisfied, then $Z(\mathfrak b)=\pi(Z(\mathfrak a))$. 
\end{lemma}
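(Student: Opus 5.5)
The plan is to compare brackets through $\pi$, using that positively graded Lie algebras have no component in degree $0$. The key observation is that for $x\in\mathfrak a$ and $y\in\mathfrak a$, the bracket $[x,y]$ lies in $\mathfrak a[>1]$, where $\pi$ is injective. All three statements will follow from this.

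\emph{Part (a).} Since $\pi$ is an isomorphism in degrees $>1$ and is graded, $\ker\pi\subset\mathfrak a[1]$. Let $k\in\ker\pi$ and $y\in\mathfrak a$. Then $[k,y]\in\mathfrak a[>1]$ and $\pi([k,y])=[\pi(k),\pi(y)]=0$. Injectivity of $\pi$ on $\mathfrak a[>1]$ gives $[k,y]=0$, so $k\in Z(\mathfrak a)$.

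\emph{The key identity.} I first prove that $\pi^{-1}(Z(\mathfrak b))=Z(\mathfrak a)$.
\begin{itemize}
\item The inclusion $\supset$ holds because $\pi$ is surjective: if $x$ is central in $\mathfrak a$, then $\pi(x)$ commutes with every $\pi(y)$, i.e.\ with all of $\mathfrak b$.
\item For $\subset$, suppose $\pi(x)$ is central. For any $y\in\mathfrak a$, the element $[x,y]$ lies in $\mathfrak a[>1]$ and maps to $0$, hence vanishes. So $x\in Z(\mathfrak a)$.
\end{itemize}
Since $\pi$ is surjective, this gives $\pi(Z(\mathfrak a))=Z(\mathfrak b)$ unconditionally, which settles (c).

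\emph{Part (b).} Both centers are graded subspaces, since the brackets are homogeneous. Because $\pi$ is a graded isomorphism in each degree $d>1$ and $Z(\mathfrak a)=\pi^{-1}(Z(\mathfrak b))$, we get $Z(\mathfrak a)[d]\cong Z(\mathfrak b)[d]$ for $d>1$. Hence one center vanishes in degrees $>1$ if and only if the other does, which is (b).

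There is no serious obstacle here. The only points needing care are the grading arguments: that the kernel sits in degree $1$, and that centers are graded subspaces.
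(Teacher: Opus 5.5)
Your proof is correct and takes essentially the paper's route. The paper also reduces everything to the identity $\pi^{-1}(Z(\mathfrak b))=Z(\mathfrak a)$, proved by the same observation that $[\tilde z,\mathfrak a]\subset\ker\pi\cap\mathfrak a[>1]=0$, and then simply says the statement follows. You spell out (b) via gradedness of the centers, and you correctly note that (c) holds even without the hypothesis of (b).
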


\begin{proof}
One has obviously $\pi^{-1}(Z(\mathfrak b))\supset Z(\mathfrak a)$.
Let $z\in Z(\mathfrak b)$ and $\tilde z\in \mathfrak a$ be a lift of
$z$. Then $[\tilde z,\mathfrak a]\subset \mathrm{ker}\pi\subset \mathfrak a[1]$; since $[\tilde z,\mathfrak a]$
is supported in degree $>1$, this implies $[\tilde z,\mathfrak a]=0$ hence $\tilde z\in Z(\mathfrak a)$. Hence 
$\pi^{-1}(Z(\mathfrak b))\subset Z(\mathfrak a)$. Therefore $\pi^{-1}(Z(\mathfrak b))=Z(\mathfrak a)$.
The statement follows. 
\end{proof}

\begin{lemma}\label{lem9}
If $\pi : \mathfrak a\to\mathfrak b$ be a morphism as in Lem. \ref{lem:general}, then there is a bijection 
$\{\text{graded sections of $\pi$}\}\to\{\text{sections of $\pi[1] : \mathfrak a[1]\to b[1]$}\}$, and 
any graded section of $\pi$ is a
morphism of Lie algebras. 
\end{lemma}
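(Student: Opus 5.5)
We construct the bijection explicitly by restriction to degree one, and then check that every graded section is a Lie algebra morphism.

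\emph{The map.} Given a graded section $s$ of $\pi$, i.e. a graded linear map $s:\mathfrak b\to\mathfrak a$ with $\pi\circ s=\mathrm{id}_{\mathfrak b}$, we send it to its degree $1$ component $s[1]:\mathfrak b[1]\to\mathfrak a[1]$. This is a section of $\pi[1]$.

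\emph{Injectivity.} In each degree $d>1$, the relation $\pi[d]\circ s[d]=\mathrm{id}$ together with the hypothesis that $\pi[d]$ is an isomorphism forces $s[d]=\pi[d]^{-1}$. So a graded section is determined by $s[1]$.

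\emph{Surjectivity.} Given a section $\sigma$ of $\pi[1]$, define $s$ by $s[1]:=\sigma$ and $s[d]:=\pi[d]^{-1}$ for $d>1$. This is a graded linear map with $\pi\circ s=\mathrm{id}$, and its restriction to degree $1$ is $\sigma$.

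\emph{Morphism property.} Let $s$ be a graded section and let $x\in\mathfrak b[d]$, $y\in\mathfrak b[e]$ be homogeneous; by bilinearity this suffices. Since the gradings are positive, $[s(x),s(y)]$ and $s([x,y])$ both lie in $\mathfrak a[d+e]$ with $d+e\geq2$. Applying $\pi$, which is a Lie algebra morphism, gives
\[
\pi([s(x),s(y)])=[x,y]=\pi(s([x,y])).
\]
As $\pi[d+e]$ is injective, $[s(x),s(y)]=s([x,y])$.

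No step is a real obstacle. The only point to watch is that brackets of positive-degree elements land in degree $\geq2$, where $\pi$ is injective. Parts (a)–(c) of Lem. \ref{lem:general} are not needed.
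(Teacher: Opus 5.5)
Your proof is correct and follows essentially the same route as the paper. The bijection is restriction to degree $1$, with inverse given by adjoining $\pi[>1]^{-1}$. The morphism property comes from $[s(x),s(y)]-s([x,y])$ lying in $\mathrm{ker}\,\pi\cap\mathfrak a[>1]=0$, which is exactly your injectivity of $\pi[d+e]$ for $d+e\geq 2$.
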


\begin{proof}
The said map takes a graded section $\sigma$ of $\pi$ to $\sigma[1]$, and its inverse takes a section 
of $\pi[1]$ to its direct sum with the inverse of the linear isomorphism $\pi[>1] : \mathfrak a[>1]\to\mathfrak b[>1]$. 

Let now $\sigma$ be a graded section of $\pi$. Let $b,b'$ be elements of $\mathfrak a$. Then 
$\pi([\sigma b,\sigma b'])=[\pi\sigma b,\pi\sigma b']=[b,b']$, therefore 
$[\sigma b,\sigma b']\equiv \sigma([b,b'])$ mod $\mathrm{ker}\pi$. Since $\mathrm{ker}\pi$ is supported in degree 1
and $[\mathfrak a,\mathfrak a]$ is supported in degree 1, this implies $[\sigma b,\sigma b']= \sigma([b,b'])$, therefore
$\sigma$ is a Lie algebra morphism. 
\end{proof}

Recall that the Lie algebra of derivations $\mathfrak{der}(\mathfrak a)$ of a $\mathbb Z$-graded 
Lie algebra $\mathfrak a$ is itself a $\mathbb Z$-graded Lie algebra. 

\begin{lemma}\label{lem:corresp}
Let $\pi : \mathfrak a\to\mathfrak b$ be a morphism as in Lem. \ref{lem:general}, satisfying the equivalent conditions of 
 Lem. \ref{lem:general}(b). Let $\sigma$ be any graded section of $\pi$. Then: 
 
 (a) the assignment $p : D\mapsto \pi D\sigma$ sets up a bijection $\mathfrak{der}(\mathfrak a)[>1]\to \mathfrak{der}(\mathfrak b)[>1]$, 
 whose inverse is given by $s : E\mapsto \sigma E\pi$; 
 
 (b) if $D\in \mathfrak{der}(\mathfrak a)[>1]$ and $E:=p(D)$, then  $\pi : \mathfrak a\to\mathfrak b$ 
 intertwines $D$ and $E$, and $\sigma : \mathfrak b\to\mathfrak a$ intertwines $E$ and $D$; 
 
 (c) if $a\in \mathfrak a[>1]$, then $p$  takes $\mathrm{ad}_a$ to $\mathrm{ad}_{\pi(a)}$, and 
 if $b\in\mathfrak b[>1]$, then $s$ takes $\mathrm{ad}_b$ to $\mathrm{ad}_{\sigma(b)}$. 
\end{lemma}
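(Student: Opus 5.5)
The plan is to reduce everything to the facts established in Lem. \ref{lem:general} and Lem. \ref{lem9}. By Lem. \ref{lem:general}(a), $\mathrm{ker}\pi\subset Z(\mathfrak a)$. Under the hypothesis of Lem. \ref{lem:general}(b), $Z(\mathfrak a)$ is supported in degree $1$, so $\mathrm{ker}\pi\subset\mathfrak a[1]$. Moreover, $\sigma\pi-\mathrm{id}_{\mathfrak a}$ is a graded endomorphism of $\mathfrak a$ that vanishes in degrees $>1$ and takes values in $\mathrm{ker}\pi$. Similarly $\pi\sigma=\mathrm{id}_{\mathfrak b}$, and $\sigma$ is a Lie algebra morphism by Lem. \ref{lem9}.

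For (a), I would first check that $p(D)=\pi D\sigma$ is a derivation of $\mathfrak b$. Since $\sigma$ and $\pi$ are Lie morphisms, $\pi D\sigma[b,b']=\pi D[\sigma b,\sigma b']=[\pi D\sigma b,\pi\sigma b']+[\pi\sigma b,\pi D\sigma b']=[p(D)b,b']+[b,p(D)b']$. Also, $p(D)$ has the same degree as $D$, so it lies in $\mathfrak{der}(\mathfrak b)[>1]$.

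Conversely, $s(E)=\sigma E\pi$ is a derivation of $\mathfrak a$. The same computation gives $s(E)[a,a']=[s(E)a,\sigma\pi a']+[\sigma\pi a,s(E)a']$. Here $\sigma\pi a'-a'\in\mathrm{ker}\pi\subset Z(\mathfrak a)$, so $[s(E)a,\sigma\pi a']=[s(E)a,a']$, and likewise for the other term.

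Next I would show that $p$ and $s$ are mutually inverse. The identity $p(s(E))=\pi\sigma E\pi\sigma=E$ is immediate. The identity $s(p(D))=\sigma\pi D\sigma\pi=D$ is the key point, and it is where the degree hypothesis is used. For $a\in\mathfrak a$, the difference $\sigma\pi a-a$ lies in $\mathrm{ker}\pi\subset Z(\mathfrak a)\cap\mathfrak a[1]$. I therefore need $D$ to kill central degree-1 elements, up to terms in degree $>1$. Since $D$ has degree $\geq2$, $D(z)$ lands in degree $\geq3$. And $D(z)$ is central, because $[D z,x]=D[z,x]-[z,Dx]=0$. But $Z(\mathfrak a)$ is supported in degree $1$, so $D(z)=0$. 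Hence $D\sigma\pi=D$. Finally, $\sigma\pi$ is the identity in degrees $>1$, and $D$ raises degree to $>1$, so $\sigma\pi D=D$. This gives $s(p(D))=D$. I expect this "derivations of positive degree kill the center" step to be the main point; everything else is formal.

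For (b), I would compute $\pi D=\pi D\sigma\pi=E\pi$ using $D\sigma\pi=D$, and $\sigma E=\sigma\pi D\sigma=D\sigma$ using $\sigma\pi D=D$. For (c), I would compute $p(\mathrm{ad}_a)(b)=\pi[a,\sigma b]=[\pi a,b]$. In the other direction, $s(\mathrm{ad}_b)(a)=\sigma[b,\pi a]=[\sigma b,\sigma\pi a]=[\sigma b,a]$, where the last equality holds because $\sigma\pi a-a$ is central. Note that $\mathrm{ad}_a\in\mathfrak{der}(\mathfrak a)[>1]$ when $a\in\mathfrak a[>1]$, so these statements make sense.
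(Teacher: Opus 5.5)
Your proposal is correct and follows essentially the same route as the paper. Both arguments reduce the key identity $\sigma\pi D\sigma\pi=D$ to two facts: a derivation of degree $>1$ preserves $Z(\mathfrak a)$ and hence kills $\ker\pi\subset Z(\mathfrak a)\subset\mathfrak a[1]$, giving $D\sigma\pi=D$; and $\sigma\pi$ is the identity on $\mathfrak a[>1]$, giving $\sigma\pi D=D$. Parts (b) and (c) then follow formally, with only cosmetic differences: you check that $\sigma E\pi$ is a derivation, and that $s(\mathrm{ad}_b)=\mathrm{ad}_{\sigma(b)}$, directly from the centrality of $\sigma\pi a-a$, whereas the paper phrases the first via pull-back modules and deduces the second from $D\sigma\pi=D$.
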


\begin{proof}
(a) If $D\in\mathfrak{der}(\mathfrak a)$, then $\pi D\sigma$ is a derivation of $\mathfrak b$ with values in the pull-back of the 
adjoint module $\mathfrak b$ by the endomorphism $\pi\sigma$ of the Lie algebra $\mathfrak b$; as $\pi\sigma=id$, this pull-back
is the adjoint module itself, therefore $\pi D\sigma\in\mathfrak{der}(\mathfrak b)$. Hence $D\mapsto \pi D\sigma$ defines a map 
$\mathfrak{der}(\mathfrak a)\to\mathfrak{der}(\mathfrak b)$, which obviously restricts to the parts of degree $>1$. 

Similarly, if $E\in\mathfrak{der}(\mathfrak b)[>1]$, then $\sigma E\pi$ is a derivation of $\mathfrak a$ with values in the pull-back 
of the submodule $\mathfrak a[>1]$ of the adjoint module $\mathfrak a$ by the endomorphism $\sigma\pi$ of $\mathfrak a$. 
This pull-back coincides with the submodule $\mathfrak a[>1]$ of the adjoint module $\mathfrak a$ itself, therefore 
$\sigma E\pi$ is a derivation of $\mathfrak a$ in the adjoint module $\mathfrak a$, hence $\sigma E\pi\in \mathfrak{der}(\mathfrak a)$. 

This implies that $D\mapsto \pi D\sigma$ and $E\mapsto \sigma E\pi$ are well-defined maps relating 
$\mathfrak{der}(\mathfrak a)[>1]$ and $\mathfrak{der}(\mathfrak b)[>1]$, which will be denoted $p$ and $s$. 

For $E\in \mathfrak{der}(\mathfrak a)[>1]$, $ps(E)=\pi\sigma E\pi\sigma=E$ as $\pi\sigma=id$. Therefore $ps=id$. 

For $D\in \mathfrak{der}(\mathfrak b)[>1]$, one has 
\begin{equation}\label{sp:D}
    sp(D)=\sigma\pi D\sigma\pi=D\sigma\pi
\end{equation} 
as $D$ is valued in $\mathfrak a[1]$
and the restriction of $\sigma\pi$ to this space is the identity. Moreover, 
$D(\mathrm{ker}\pi)\subset D(Z(\mathfrak a))\subset Z(\mathfrak a)$  where the first inclusion follows from
Lem. \ref{lem:general}(a), and the last inclusion follows from the fact that $D$ is a Lie algebra derivation; 
and since $D$ is supported in degree 1, $D(\mathrm{ker}\pi)\subset\mathfrak a[>1]$. Since $Z(\mathfrak a)$
is supported in degree 0, it follows that 
\begin{equation}\label{temp}
    D(\mathrm{ker}\pi)=0.
\end{equation} 
The equality $\mathrm{ker}\sigma\pi=\mathrm{ker}\pi$ follows from the injectivity of $\sigma$. Combining it with
\eqref{temp}, one derives 
\begin{equation}\label{temp2}
    D(\mathrm{ker}\sigma\pi)=0.
\end{equation} 
Since $\pi\sigma=id$, one has $(\sigma\pi)^2=\sigma\pi$, therefore 
$\sigma\pi \circ (id-\sigma\pi)=0$, therefore $\mathrm{im}(id-\sigma\pi)\subset \mathrm{ker}\sigma\pi$, which together with 
\eqref{temp2} implies $D\circ (id-\sigma\pi)=0$, therefore 
\begin{equation}\label{temp3}
D\in \mathfrak{der}(\mathfrak a)[>1],\quad D\sigma\pi=D.     
\end{equation}
Together with \eqref{sp:D}, this implies
$sp(D)=D$. Therefore $sp=id$. 

(b) Let $D\in\mathfrak{der}(\mathfrak a)[>0]$. Then $E=\pi D\sigma$. Then $E\pi=\pi D\sigma\pi=\pi D$ by \eqref{temp3}, hence 
$\pi$ intertwines $D$ and $E$. On the other hand, $D=\sigma E\pi$, hence $D\sigma=\sigma E\pi\sigma=\sigma E$ since $\pi\sigma=id$. 

(c) Let $a\in\mathfrak a[>1]$. Then $s(\mathrm{ad}_a)=\pi \mathrm{ad}_a\sigma=\mathrm{ad}_{\pi(a)}\pi\sigma=\mathrm{ad}_{\pi(a)}$
as $\pi\sigma=id$. Let $b\in\mathfrak b[>1]$. Then $p(\mathrm{ad}_b)=\sigma\mathrm{ad}_b\pi
=\mathrm{ad}_{\sigma(b)}\sigma \pi=\mathrm{ad}_{\sigma(b)}$ as $\mathrm{ad}_{\sigma(b)}$ is in 
$\mathfrak{der}(\mathfrak b)[>1]$ and by \eqref{temp3}. 
\end{proof}

\begin{lemma}\label{lem:indpce}
Let $\pi : \mathfrak a\to\mathfrak b$ be a morphism as in Lem. \ref{lem:general}, satisfying the equivalent conditions of 
 Lem. \ref{lem:general}(b). Then the bijection of Lem. \ref{lem:corresp}(b)
%, as well as the map $\mathfrak b[>1]\to\mathfrak a$, $b\mapsto \sigma(b)$ used in (c), 
 does not depend on a choice of $\sigma$. 
\end{lemma}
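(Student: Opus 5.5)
The plan is to compare two graded sections directly and show that the maps $p$ and $s$ of Lem. \ref{lem:corresp}(a) do not change when one section is replaced by another. Since $p$ and $s$ are mutually inverse, it is enough to handle $p$, but I will check both.

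Let $\sigma,\sigma'$ be two graded sections of $\pi$. By Lem. \ref{lem9}, a graded section is determined by its degree $1$ part, and on $\mathfrak b[>1]$ it is forced to equal the inverse of the isomorphism $\pi[>1]$. Hence $\sigma$ and $\sigma'$ agree on $\mathfrak b[>1]$. On $\mathfrak b[1]$ they satisfy $\pi(\sigma'-\sigma)=0$, so $\sigma'-\sigma$ takes values in $\mathrm{ker}\pi$, and $\mathrm{ker}\pi\subset Z(\mathfrak a)$ by Lem. \ref{lem:general}(a).

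For $p$, take $D\in\mathfrak{der}(\mathfrak a)[>1]$. Then
$$\pi D\sigma'-\pi D\sigma=\pi D(\sigma'-\sigma),$$
so it suffices to know that $D(\mathrm{ker}\pi)=0$. This is the vanishing \eqref{temp} from the proof of Lem. \ref{lem:corresp}, and its proof uses no choice of section. A derivation maps the center into the center, so $D(\mathrm{ker}\pi)\subset D(Z(\mathfrak a))\subset Z(\mathfrak a)$. Because $D$ has degree $>1$ and $\mathrm{ker}\pi$ sits in degree $1$, the image $D(\mathrm{ker}\pi)$ lies in degrees $>1$. But $Z(\mathfrak a)$ is supported in degree $1$ by the hypothesis of Lem. \ref{lem:general}(b). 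Therefore $D(\mathrm{ker}\pi)=0$, and $p$ does not depend on $\sigma$.

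For $s$, take $E\in\mathfrak{der}(\mathfrak b)[>1]$. Since $\mathfrak b$ is positively graded, $E\pi$ takes values in $\mathfrak b[>1]$. On $\mathfrak b[>1]$ the sections $\sigma$ and $\sigma'$ coincide, so $\sigma E\pi=\sigma'E\pi$. This second check is not strictly needed, since $s=p^{-1}$.

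The only point needing care is the degree bookkeeping. One has to confirm that "degree $>1$" for derivations really forces $D(\mathfrak a[1])\subset\mathfrak a[>1]$ and $E(\mathfrak b)\subset\mathfrak b[>1]$. With this convention, the two vanishing arguments above go through as stated.
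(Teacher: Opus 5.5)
Your proof is correct and follows essentially the same route as the paper: $\sigma'-\sigma$ takes values in $\ker\pi\subset Z(\mathfrak a)$, and a derivation of degree $>1$ vanishes there, so $\pi D\sigma=\pi D\sigma'$. Your degree argument for $D(\ker\pi)=0$ is more careful than the paper's assertion that every derivation vanishes on $Z(\mathfrak a)$. That assertion holds only for the positive-degree derivations considered here. Your separate check of $s$ is correct but redundant, as you yourself note, since $s=p^{-1}$.
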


\begin{proof}
Let $\sigma$ and $\sigma'$ be graded sections of $\tau$. Then $\pi\sigma=\pi\sigma'=id$, therefore
$\pi\circ(\sigma'-\sigma)=0$, therefore $\mathrm{im}(\sigma'-\sigma)\subset \mathrm{ker}\pi\subset Z(\mathfrak a)$. 
If $D\in \mathfrak{der}(\mathfrak a)$, then the restriction of $D$ to $Z(\mathfrak a)$ is 0. This implies 
that the restriction of $D$ to $\mathrm{im}(\sigma'-\sigma)$ is zero, therefore $D\circ (\sigma'-\sigma)=0$. 
Hence $D\sigma'=D\sigma$, therefore $\pi D\sigma=\pi D\sigma'$, therefore the maps 
$\mathfrak{der}(\mathfrak a)[>1]\to \mathfrak{der}(\mathfrak b)[>1]$ corresponding to $\sigma$ and $\sigma'$
are the same. 
 \end{proof}

\subsection{Characterization of $\mathfrak{grt}_1$ in terms of the Lie algebras $\mathfrak{t}_3$ and $\mathfrak{P}_5$}
\label{sect:variant}

\begin{corollary}\label{cor:main}
The subset $\mathfrak{grt}_1\subset\mathfrak G$ is equal to the subset of $\mathfrak G_\inert$ of all elements  
$C$ such that there exists $\overline D\in\mathfrak{der}(\mathfrak P_5)$ and $\overline X\in\mathfrak P_5$, 
satisfying 

(a) the morphism $\mathfrak t_3\to\mathfrak P_5$ given by $\pi_4\circ (x\mapsto x^{1,2,3})$ intertwines 
$D_C$ and $\overline D$, and 

(b) the morphism $\mathfrak t_3\to\mathfrak P_5$ given by $\pi_4\circ (x\mapsto x^{1,2,4})$ intertwines $D_C$ 
and $\overline D+\mathrm{ad}_{\overline X}$. 
\end{corollary}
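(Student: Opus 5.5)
The plan is to deduce Cor.~\ref{cor:main} from Thm.~\ref{thm:main} by transporting derivations along the projection $\pi_4:\mathfrak t_4\to\mathfrak P_5$, using Lem.~\ref{lem:corresp} and Lem.~\ref{lem:indpce}.

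\emph{Setting up the transport.} First I check that $\pi:=\pi_4$ satisfies the hypotheses of Lem.~\ref{lem:general}. It is surjective and graded, and its kernel is $Z(\mathfrak t_4)=\mathbf k z_4$, which sits in degree 1; hence $\pi$ is an isomorphism in degrees $>1$. For the same reason $Z(\mathfrak t_4)$ is supported in degree 1, so the conditions of Lem.~\ref{lem:general}(b) hold. Now fix a graded section $\sigma$. Lem.~\ref{lem:corresp} gives mutually inverse bijections $p:D\mapsto\pi D\sigma$ and $s:E\mapsto\sigma E\pi$ between $\mathfrak{der}(\mathfrak t_4)[>1]$ and $\mathfrak{der}(\mathfrak P_5)[>1]$. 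These bijections are compatible with the intertwining properties and send inner derivations to inner derivations.

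\emph{Inclusion $\mathfrak{grt}_1\subset$ RHS.} Take $C\in\mathfrak{grt}_1$, and let $(D,X)$ be as constructed in the proof of Thm.~\ref{thm:main}. Since $\mathfrak{grt}_1\subset\mathfrak G$ lives in degrees $>1$, I may decompose into homogeneous components; $D$ and $X=C(t_{4,12},t_{12,3})$ then lie in degrees $>1$ (derivation degree $\geq1$, but the degree-shift conventions should be handled componentwise). Set $\overline D:=\pi D\sigma$ and $\overline X:=\pi(X)$. By Lem.~\ref{lem:corresp}(b), $\pi\circ D=\overline D\circ\pi$, and by (c), $\pi\circ(D+\mathrm{ad}_X)=(\overline D+\mathrm{ad}_{\overline X})\circ\pi$. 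Composing the intertwining relations (a), (b) of Thm.~\ref{thm:main} with $\pi$ then gives conditions (a), (b) of the corollary.

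\emph{Reverse inclusion.} Let $C\in\mathfrak G_\inert$, with $(\overline D,\overline X)$ satisfying (a) and (b) of the corollary. I reduce to a homogeneous $C$ of some degree $k>1$, replacing $\overline D$ and $\overline X$ by their corresponding homogeneous components; this is allowed because all maps involved are graded and the conditions are linear. Then $\overline D$ has degree $k-1\geq1$. The point of ``$>1$'' in Lem.~\ref{lem:corresp} is that it refers to the degree of the derivation, so this $\overline D$ falls in its range. Define $D:=\sigma\overline D\pi$ and $X:=\sigma(\overline X)$. By Lem.~\ref{lem:corresp}(b),(c), $\sigma$ intertwines $\overline D$ with $D$ and $\overline D+\mathrm{ad}_{\overline X}$ with $D+\mathrm{ad}_X$. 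Hence
\[
D(x^{1,2,3})=D\sigma\pi(x^{1,2,3})=\sigma\overline D\pi(x^{1,2,3})=\sigma\bigl(\pi(D_C(x)^{1,2,3})\bigr).
\]
The remaining task is to show that $\sigma\pi$ fixes $D_C(x)^{1,2,3}$. This holds because that element has degree $>1$, where $\sigma\pi=\mathrm{id}$; the first equality above used $D\sigma\pi=D$, which is \eqref{temp3}. The same argument applies to $\mu_{124}$. Thm.~\ref{thm:main} then gives $C\in\mathfrak{grt}_1$.

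\emph{Main obstacle.} The delicate point is the degree bookkeeping. One must make sure the derivation $\overline D$ and the element $\overline X$ really lie in the ranges where Lem.~\ref{lem:corresp} applies, including the degree-$1$ part of $\overline X$ in particular. I plan to handle $\overline X$ directly. Its degree-1 component commutes with $\pi(t_{12})$ modulo lower-order terms, and it can be absorbed because only $\mathrm{ad}_{\overline X}$ restricted to the image matters. Equivalently, I project condition (b) onto the degree-$k$ component, which involves only $\overline X[k]$ with $k>1$.
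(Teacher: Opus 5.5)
Your proposal is correct and follows the paper's proof: you transport along $\pi_4$ and a graded section $\sigma$ via Lem.~\ref{lem:corresp}, after restricting $\overline D,\overline X$ to degrees $>1$. The only variation is in the reverse inclusion. There you allow an arbitrary $\sigma$ and use that $\sigma\pi_4=\mathrm{id}$ on $\mathfrak t_4[>1]$, which contains all $D_C(x)^{1,2,3}$ and $D_C(x)^{1,2,4}$; the paper instead chooses a specific section with $\sigma\pi_4\circ\mu_{123}=\mu_{123}$ and $\sigma\pi_4\circ\mu_{124}=\mu_{124}$, so your version is slightly cleaner. One correction: for $C$ of degree $k$ the derivation $\overline D$ has degree $k$ (since $D_C$ raises degrees by $k$), not $k-1$, and it is this that puts $\overline D$ in $\mathfrak{der}(\mathfrak P_5)[>1]$.
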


\begin{proof}
The map $\pi_4:\mathfrak t_4\to\mathfrak P_5$ is a surjective morphism of positively graded Lie algebras, which is an 
isomorphism in degree $>1$. The center $Z(\mathfrak t_4)$ is supported in degree 1. Therefore it satisfies the equivalent 
conditions of Lem. \ref{lem:general}(b). Let $\tilde\sigma :\mathfrak P_5[1]\to \mathfrak t_4[1]$ be the section given by 
$X_{12}\mapsto t_{12}$, $X_{13}\mapsto t_{13}$, $X_{23}\mapsto t_{23}$, $X_{14}\mapsto t_{13}$, $X_{24}\mapsto t_{23}$
(and therefore $X_{15}\mapsto -t_{1,234}$, $X_{25}\mapsto -t_{2,134}$, $X_{34}\mapsto t_{34}-z_4$, 
$X_{35}\mapsto -t_{3,124}+z_4$, $X_{45}\mapsto -t_{4,123}+z_4$)
and let $\sigma : \mathfrak P_5\to \mathfrak t_4$ be the corresponding graded section of $\pi_4$, 
constructed as in Lem. \ref{lem9}, which is a Lie algebra morphism by Lem. \ref{lem9}. It then follows from Lem. \ref{lem:corresp} that
the assignments $p : D\mapsto \sigma D\pi_4$ and $s : \overline D\mapsto \pi_4D\sigma$ are inverse bijections relating
$\mathfrak{der}(\mathfrak t_4)[1]$ and $\mathfrak{der}(\mathfrak P_5)[>1]$, and that $p(\mathrm{ad}_X)=\mathrm{ad}_{\pi_4(X)}$ for 
$X\in\mathfrak t_4$ and $s(\mathrm{ad}_{\overline X})=\mathrm{ad}_{\sigma(\overline X)}$ for any $\overline X\in\mathfrak P_5$. 

Let $C\in \mathfrak{grt}_1$. Let  $X\in \mathfrak t_4$ and $D\in\mathfrak{der}(\mathfrak t_4)$ be the elements associated with 
$C$ by Thm. \ref{thm:main}. They are supported in degree $>1$ since so is $\mathfrak{grt}_1$. Let 
$\overline X:=\pi_4(X)\in\mathfrak P_5[>1]$ and $\overline D:=p(D)\in \mathfrak{der}(\mathfrak P_5)[>1]$. 

It follows from Thm. \ref{thm:main} that $\mathfrak t_3\to\mathfrak t_4$, $x\mapsto x^{1,2,3}$ intertwines
$D_C$ and $D$, and from Lem. \ref{lem:corresp}(b) that $\pi_4 : \mathfrak t_4\to\mathfrak P_5$ intertwines $D$ and $p(D)=\overline D$. 
Therefore $\pi_4\circ (x\mapsto x^{1,2,3}) : \mathfrak t_3\to\mathfrak P_5$ intertwines $D_C$ and $\overline D$. 
It follows that $(C,\overline D,\overline X)$ satisfy the condition (a) from the statement.  

It follows from Thm. \ref{thm:main} that $\mathfrak t_3\to\mathfrak t_4$, $x\mapsto x^{1,2,4}$ intertwines
$D_C$ and $D+\mathrm{ad}_X$, and from Lem. \ref{lem:corresp}(b) that $\pi_4 : \mathfrak t_4\to\mathfrak P_5$ intertwines 
$D+\mathrm{ad}_X$ and $p(D+\mathrm{ad}_X)=p(D)+p(\mathrm{ad}_X)=\overline D+\mathrm{ad}_{\overline X}$, where the second
equality follows from the definition of $\overline D$ and from the first part of Lem. \ref{lem:corresp}(c). 
Therefore $\pi_4\circ (x\mapsto x^{1,2,4}) : \mathfrak t_3\to\mathfrak P_5$ intertwines $D_C$ and 
$\overline D+\mathrm{ad}_{\overline X}$. It follows that $(C,\overline D,\overline X)$ satisfy the condition (b) from the statement.  
All this implies that 
\begin{align}\label{direct:incl}
&\text{if $C\in\mathfrak{grt}_1$, then there exist $\overline D\in\mathfrak{der}(\mathfrak P_5)$ and $\overline X\in\mathfrak P_5$, 
satisfying }    
\\ & \nonumber\text{the conditions (a) and (b) from the statement.}\end{align}

Let now $C\in\mathfrak G_\inert$ and let $\overline D\in\mathfrak{der}(\mathfrak P_5)$ and $\overline X\in\mathfrak P_5$, 
satisfying the conditions (a) and (b) from the statement, i.e. $\pi_4\circ (x\mapsto x^{1,2,3})$ intertwines 
$D_C$ and $\overline D$ and $\pi_4\circ (x\mapsto x^{1,2,4})$ intertwines $D_C$ and $\overline D+\mathrm{ad}_{\overline X}$.  
One may assume the degree supports of 
$\overline D$ and $\overline X$ to be contained in the degree support of $C$, therefore in $\{d|d>1\}$. 
Let then $D:=s(\overline D)$ and $X:=\sigma(\overline X)$. Then $D\in\mathfrak{der}(\mathfrak t_4)$, $X\in\mathfrak t_4$, 
and $\sigma$ intertwines $\overline D$ and $s(\overline D)=D$ (by the second part of Lem. \ref{lem:corresp}(b) and the definition of 
$D$) as well as $\mathrm{ad}_{\overline X}$ and $s(\mathrm{ad}_{\overline X})=\mathrm{ad}_{\sigma(\overline X)}=\mathrm{ad}_X$
(by the second part of Lem. \ref{lem:corresp}(b), Lem. \ref{lem:corresp}(c) and the definition of $X$). 
All this implies that $\sigma\pi_4\circ (x\mapsto x^{1,2,3})$ intertwines $D_C$ and $D$ and that 
$\sigma\pi_4\circ (x\mapsto x^{1,2,4})$ intertwines $D_C$ and $D+\mathrm{ad}_X$. Since $\sigma$ takes 
$X_{ij}$ to $t_{ij}$ for $ij$ any of the pairs $12,13,23,14,24,34$, one has 
$\sigma\pi_4\circ (x\mapsto x^{1,2,3})=(x\mapsto x^{1,2,3})$ and 
$\sigma\pi_4\circ (x\mapsto x^{1,2,4})=(x\mapsto x^{1,2,4})$ (equalities of morphisms 
$\mathfrak t_3\to\mathfrak t_4$). It follows that $(x\mapsto x^{1,2,3})$ intertwines $D_C$ and $D$ and that 
$(x\mapsto x^{1,2,4})$ intertwines $D_C$ and $D+\mathrm{ad}_X$, therefore that $C$ satisfies conditions (a) and (b) from 
Thm. \ref{thm:main}, which by this theorem implies $C\in\mathfrak{grt}_1$.
Therefore 
\begin{align}\label{inverse:incl}
&\text{if $C\in\mathfrak G_\inert$ is such that for some $\overline D\in\mathfrak{der}(\mathfrak P_5)$ and $\overline X\in\mathfrak P_5$}    
\\ & \nonumber\text{the conditions (a) and (b) from the statement are satisfied, then $C\in\mathfrak{grt}_1$. }\end{align}
The statement then follows from the combination of \eqref{direct:incl} and \eqref{inverse:incl}. 
\end{proof}

\section{Characterization via Lie algebras of infinitesimal framed braids on the sphere}\label{sec:2}

This section is devoted to the proof of Thm. \ref{thm:main:2}. §\ref{sec:2:1} is devoted to the family of Lie algebras 
$\mathfrak P_{\vec n}$, $n \geq3$ and to the morphisms relating them. In §\ref{sect:2:2}, we discuss the relation 
with the Lie algebras $\mathfrak t_n$, $n \geq2$  from \S\ref{sect:1}. After proving an auxiliary result in §\ref{sect:2:3}, 
we formulate and prove Thm. \ref{thm:main:2} on a characterization of the Lie algebra $\mathfrak{grt}_1$ in terms of the 
Lie algebras $\mathfrak P_{\vec n}$ (§\ref{sect:2:4}). 

\subsection{Lie algebras of infinitesimal framed braids on the sphere}\label{sec:2:1}

Let $n\geq2$ and let $\mathfrak P_{\vec{n+1}}$ be the Lie algebra with generators $\vec X_{ij}$ where $i,j\in[1,n+1]$
and relations (a) $\vec X_{ji}=\vec X_{ij}$ for any $i,j$, (b) $[\vec X_{ij},\vec X_{kl}]=0$ for any distinct
$i,j,k,l$, (c) $\sum_j\vec X_{ij}=0$ for any $i$, (d) $\vec X_{ii}$ is central. This is a positively graded Lie algebra, 
each $\vec X_{ij}$ being attributed degree 1. 

\begin{lemma}\label{lem:descr:center:pn}
(a) There is a unique Lie algebra morphism $\vec\pi_n : \mathfrak P_{\vec{n+1}}\to\mathfrak P_{n+1}$ induced by 
$\vec X_{ij}\to X_{ij}$ for any $i,j$. 

(b) The kernel of $\vec\pi_n$  is supported in degree $1$, is equal to the center 
$Z(\mathfrak P_{\vec{n+1}})$, and is freely generated by $\vec X_{11},\ldots,\vec X_{n+1,n+1}$ as a $\mathbf k$-module.   
\end{lemma}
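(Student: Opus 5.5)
For (a), I would check that the assignment $\vec X_{ij}\mapsto X_{ij}$ respects the defining relations (a)--(d) of $\mathfrak P_{\vec{n+1}}$. Here $X_{ii}$ should be read as $0$, and $X_{ij}\in\mathfrak P_{n+1}$ are the elements of \S\ref{sect:1:1}. Relation (a) is the symmetry $X_{ij}=X_{ji}$. Relation (d) holds trivially since $0$ is central. Relation (c) becomes $\sum_{j\neq i}X_{ij}=0$: for $i=n+1$ it is the definition of $X_{i,n+1}$ summed over $i$ (using $\sum_{i<j\le n}X_{ij}=\pi_n(z_n)=0$), and for $i\le n$ it is the definition of $X_{i,n+1}$ itself. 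Relation (b), $[X_{ij},X_{kl}]=0$ for distinct indices, holds in $\mathfrak t_n$ when all indices are $\le n$. When $l=n+1$, I would write $X_{k,n+1}=-\sum_{j\neq k, j\le n}X_{kj}$ and reduce to the relations $[t_{ij},t_{ik}+t_{jk}]=0$ and $[t_{ij},t_{kl}]=0$, the latter using $[t_{ij},z_n]=0$. Uniqueness is clear because the $\vec X_{ij}$ generate.

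For (b), I would first construct a splitting. Define $\mathfrak P_{\vec{n+1}}\to \mathfrak P_{n+1}\oplus \mathbf k^{n+1}$ by composing $\vec\pi_n$ with the map $\vec X_{ij}\mapsto 0$ $(i\ne j)$, $\vec X_{ii}\mapsto e_i$. This second component is not obviously a morphism, because relation (c) forces $\sum_j \vec X_{ij}=0$, which mixes $\vec X_{ii}$ with off-diagonal generators. So I would change generators instead. Relation (c) lets one eliminate $\vec X_{ii}=-\sum_{j\ne i}\vec X_{ij}$, and then relation (d) says $\sum_{j\neq i}\vec X_{ij}$ is central. So $\mathfrak P_{\vec{n+1}}$ is presented by the $\vec X_{ij}$ $(i<j)$ subject to (b) and the centrality of $c_i:=\sum_{j\ne i}\vec X_{ij}$. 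Meanwhile $\mathfrak P_{n+1}$ is the quotient by the ideal generated by the $c_i$, which by centrality is just their linear span. Hence $\ker\vec\pi_n=\mathrm{span}(c_i)=\mathrm{span}(\vec X_{ii})$ is supported in degree $1$ and central.

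Two points remain. First, freeness of the $\vec X_{ii}$ in degree $1$: the degree 1 part of $\mathfrak P_{\vec{n+1}}$ is the span of the symmetric generators modulo nothing, since all relations besides (a),(c) are in degree 2. By (c) it has rank $\binom{n+1}{2}$, with free basis $\{\vec X_{ij}\}_{i<j}$. Here $\vec X_{ii}=-c_i$, and the $c_i$ are linearly independent for $n+1\ge3$, because the incidence matrix of the complete graph on $n+1\ge3$ vertices has rank $n+1$ over $\mathbb Q$. Second, the center equals the kernel. This follows from Lem. \ref{lem:general}: $\vec\pi_n$ is surjective and an isomorphism in degrees $>1$ (its kernel is in degree 1), and $Z(\mathfrak P_{n+1})$ is supported in degree 1 (indeed is $0$, as $\mathfrak P_{n+1}=\mathfrak t_n/Z(\mathfrak t_n)$ and one checks $Z(\mathfrak P_{n+1})=\pi(Z)$ with $\pi$ playing the role of Lem. \ref{lem:general}(c), using $Z(\mathfrak t_n)=\mathbf k z_n$). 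Then $Z(\mathfrak P_{\vec{n+1}})=\vec\pi_n^{-1}(Z(\mathfrak P_{n+1}))=\vec\pi_n^{-1}(0)=\ker\vec\pi_n$.

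The main obstacle is the claim that the presentation in the first part of (b) identifies $\mathfrak P_{n+1}$ exactly with $\mathfrak P_{\vec{n+1}}/\mathrm{span}(c_i)$. This requires matching the given presentation of $\mathfrak P_{n+1}$ via $\mathfrak t_n/\mathbf kz_n$ with the symmetric presentation (generators $X_{ij}$, $i<j\le n+1$, relations (b) and $\sum_j X_{ij}=0$). I would check this by writing down mutually inverse morphisms, eliminating the $X_{i,n+1}$, exactly as in the verification in part (a).
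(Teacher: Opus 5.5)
Your proposal is correct and follows essentially the same route as the paper: identify $\ker\vec\pi_n$, by comparing presentations, as the ideal generated by the central elements $\vec X_{ii}$ (hence their $\mathbf k$-span, in degree $1$). Then apply Lem.~\ref{lem:general} first to $\pi_n:\mathfrak t_n\to\mathfrak P_{n+1}$ to get $Z(\mathfrak P_{n+1})=0$, and then to $\vec\pi_n$ to get $Z(\mathfrak P_{\vec{n+1}})=\ker\vec\pi_n$. You make explicit two points the paper leaves implicit: the identification of $\mathfrak t_n/\mathbf k z_n$ with the symmetric presentation of $\mathfrak P_{n+1}$, and the freeness of the $\vec X_{ii}$ via the rank of the incidence matrix of the complete graph on $n+1\geq 3$ vertices.
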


\begin{proof}
(a) follows from the presentations of $\mathfrak P_{\vec{n+1}}$ and $\mathfrak P_{{n+1}}$. 
(b) These presentations also imply that $\mathrm{ker}\vec\pi_n$
is the ideal generated by $\vec X_{11},\ldots,\vec X_{n+1,n+1}$, which since these elements are central
is also the $\mathbf k$-submodule generated by them, and is therefore supported in degree 1. 
A study of the map $\mathfrak P_{\vec{n+1}}\to\mathfrak P_{{n+1}}$ then implies that 
$\mathrm{ker}(\mathfrak P_{\vec{n+1}}\to\mathfrak P_{n+1})$ is freely generated by 
$\vec X_{11},\ldots,\vec X_{n+1,n+1}$ as a $\mathbf k$-module.   

It follows from Lem. \ref{lem:general} applied to $\pi_n : \mathfrak t_n\to \mathfrak P_{n+1}$ 
together with the fact that $Z(\mathfrak t_n)$ is supported in degree 1 that $Z(\mathfrak P_{n+1})
=\pi_n(Z(\mathfrak t_n))$, therefore that $Z(\mathfrak P_{n+1})=0$. Using this equality and applying Lem. \ref{lem:general} to 
$\vec\pi_n : \mathfrak P_{\vec{n+1}}\to \mathfrak P_{n+1}$, one then obtains 
$Z(\mathfrak P_{\vec{n+1}})=\mathrm{ker}\pi_{\vec n}$. 
\end{proof}

Let $n,m\geq2$. A map $\phi : [1,m+1]\to [1,n+1]$ may as well be identified with the sequence 
$\phi^{-1}(1),\ldots,\phi^{-1}(n+1)$ of its fibers. Such a map 
defines a Lie algebra morphism $\mathfrak P_{\overrightarrow{n+1}}\to\mathfrak P_{\vec{m+1}}$, $x\mapsto x^\phi$, by 
$\vec X_{ij}\mapsto \sum_{i',j':\phi(i')=i,\phi(j')=j}\vec X_{i'j'}$, homogeneous of degree 0.

\subsection{Relation with Lie algebras of infinitesimal braids on the plane}\label{sect:2:2}

\begin{lemma}\label{lem:friday}
(a) For $n\geq 2$ there is a unique Lie algebra morphism $\underline\pi_n : \mathfrak P_{\vec{n+1}}\to \mathfrak t_n$, such that 
$$
\underline\pi_n : \vec X_{ii}\mapsto 0 \text{ for } i\in[1,n],\vec X_{n+1,n+1}\mapsto 2z_n, \vec X_{ij}\mapsto t_{ij}  \text{ for } i\neq j\in[1,n],
\vec X_{i,n+1}\mapsto -t_{i,1...\check i...n}  \text{ for } i\in[1,n].  
$$

(b) The following diagrams are commutative 
$$
\xymatrix@C=4em
{ \mathfrak P_{\vec4} \ar^{x\mapsto x^{1,2,3,45}}[r]\ar_{\underline\pi_3}[d]&\mathfrak P_{\vec5}\ar^{\underline\pi_4}[d] 
\\\mathfrak t_3\ar^{ x\mapsto x^{1,2,3}}[r] & \mathfrak t_4}
\text{ and }
\xymatrix@C=4em
{ \mathfrak P_{\vec4} \ar^{x\mapsto x^{1,2,4,35}}[r]\ar_{\underline\pi_3}[d]&\mathfrak P_{\vec5}\ar^{\underline\pi_4}[d] 
\\\mathfrak t_3\ar^{ x\mapsto x^{1,2,4}}[r] & \mathfrak t_4}
$$ 
\end{lemma}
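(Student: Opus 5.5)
For (a), I will use the presentation of $\mathfrak P_{\vec{n+1}}$ by generators $\vec X_{ij}$ and relations (a)--(d). I define the images of the generators by the stated formulas. Then I check that each relation holds in $\mathfrak t_n$; uniqueness is automatic because the $\vec X_{ij}$ generate.

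\textbf{Symmetry (a).} This is clear, since $t_{ij}=t_{ji}$.

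\textbf{Centrality (d).} The images of the $\vec X_{ii}$ are $0$ or $2z_n$, and both are central in $\mathfrak t_n$.

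\textbf{Row sums (c).} For $i\in[1,n]$, the image of $\sum_j\vec X_{ij}$ is
$$0+\sum_{j\neq i,\,j\le n}t_{ij}-t_{i,1\ldots\check i\ldots n}=0.$$
For $i=n+1$, the image is
$$2z_n-\sum_{i=1}^n t_{i,1\ldots\check i\ldots n}=2z_n-2z_n=0,$$
because each $t_{ij}$ occurs twice in the last sum.

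\textbf{Commutation (b).} There are two cases for distinct $i,j,k,l$.
\begin{itemize}
\item If all of $i,j,k,l$ are $\le n$, the relation is one of the defining relations of $\mathfrak t_n$.
\item If one index equals $n+1$, say $l=n+1$, I must show $[t_{ij},t_{k,1\ldots\check k\ldots n}]=0$. Write $t_{k,1\ldots\check k\ldots n}=t_{ki}+t_{kj}+\sum_{m\neq i,j,k}t_{km}$. The bracket of $t_{ij}$ with $t_{ki}+t_{kj}$ vanishes by the relation $[t_{ij},t_{ik}+t_{jk}]=0$. The bracket with each remaining $t_{km}$ vanishes by the four-index relation.
\end{itemize}
Relation (b) only concerns distinct indices, so no other case arises.

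For (b), both legs of each square are Lie algebra morphisms, so it suffices to compare them on the generators $\vec X_{ij}$, $i,j\in[1,4]$, of $\mathfrak P_{\vec 4}$. I will use the following concrete data.
\begin{itemize}
\item The map $x\mapsto x^{1,2,3,45}$ sends $\vec X_{ij}\mapsto\vec X_{ij}$ for $i,j\le 3$ and $\vec X_{i4}\mapsto \vec X_{i4}+\vec X_{i5}$ for $i\le 3$.
\item It sends $\vec X_{44}\mapsto \vec X_{44}+\vec X_{55}+2\vec X_{45}$.
\item Under $\underline\pi_4$ one has $\vec X_{45}\mapsto -t_{4,123}$ and $\vec X_{55}\mapsto 2z_4$.
\end{itemize}

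\textbf{First square.} I check the generators one by one.
\begin{itemize}
\item For $i\neq j\le 3$, both sides give $t_{ij}$.
\item For $i\le3$, going right then down gives $t_{i4}-t_{i,1234\setminus i}=-\sum_{j\le 3,\,j\neq i}t_{ij}$. Going down then right gives $(-t_{i,1\ldots\check i\ldots 3})^{1,2,3}$, which is the same element.
\item For $\vec X_{44}$, going right then down gives $0+2z_4-2t_{4,123}=2z_3^{1,2,3}$. Going down then right also gives $2z_3^{1,2,3}$.
\item For $\vec X_{ii}$ with $i\le3$, both sides are $0$.
\end{itemize}

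\textbf{Second square.} This is the same bookkeeping, now with $3\mapsto\{3,5\}$ and $3$-strand index $3\mapsto 4$ in $\mathfrak t_4$.
\begin{itemize}
\item One has $\vec X_{33}\mapsto \vec X_{33}+\vec X_{55}+2\vec X_{35}$, whose image under $\underline\pi_4$ is $2z_4-2t_{3,124}=2z_3^{1,2,4}$.
\item The image of $\vec X_{i3}$ is $t_{i3}-t_{i,\ldots}$, which collapses correctly.
\item The image of $\vec X_{34}$ is $\vec X_{34}+\vec X_{45}\mapsto t_{34}-t_{4,123}=-t_{4,12}$. This agrees with $(-t_{3,12})^{1,2,4}$.
\end{itemize}

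The only real obstacle is careful sign and index bookkeeping in these generator checks, in particular the diagonal generators $\vec X_{ii}$ that are sent to fibers of size two, where the factor $2\vec X_{i'j'}$ must combine with $2z_4$ exactly to give $2z_3^\phi$.
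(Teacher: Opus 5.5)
Your proof is correct up to an index slip in the second square (described below). For (a) you take a different route from the paper.

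\textbf{Part (a).} The paper checks only the linear relation (c) in degree $1$. It then observes that composing the assignment with $\pi_n:\mathfrak t_n\to\mathfrak P_{n+1}$ gives the canonical morphism $\mathfrak P_{\vec{n+1}}\to\mathfrak P_{n+1}$. Relations (b) and (d) are quadratic, and $\pi_n$ is injective in degree $2$ because its kernel $\mathbf k z_n$ sits in degree $1$. Hence the images of these relations in $\mathfrak t_n$ vanish automatically.

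You instead verify (b) and (d) directly in $\mathfrak t_n$. The only nontrivial input is that $[t_{ij},t_{ki}+t_{kj}]=0$ is the defining relation of $\mathfrak t_n$ for the triple $(k,i,j)$. Your route is a bit longer but self-contained: it uses only the centrality of $z_n$. The paper's shortcut relies on two facts you avoid: the existence of the canonical morphism $\mathfrak P_{\vec{n+1}}\to\mathfrak P_{n+1}$ (i.e. the symmetric presentation of $\mathfrak P_{n+1}$), and the equality $Z(\mathfrak t_n)=\mathbf k z_n$.

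\textbf{Part (b).} You do the same generator check as the paper, with one difference. The paper tests only $\vec X_{ij}$ with $i,j\in[1,3]$. By relation (c) these already generate $\mathfrak P_{\vec 4}$, since $\vec X_{i4}=-\sum_{j\le 3}\vec X_{ij}$ and $\vec X_{44}=\sum_{i,j\le 3}\vec X_{ij}$. So agreement on $\vec X_{i4}$ and $\vec X_{44}$ is automatic, and the doubled-fiber diagonal computation you flag as the main obstacle can be skipped.

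\textbf{The slip.} In $x\mapsto x^{1,2,4,35}$ the fiber over $4$, not over $3$, is $\{3,5\}$, while $3\mapsto 4$. The correct images are:
$\vec X_{33}\mapsto\vec X_{44}$ and $\vec X_{i3}\mapsto\vec X_{i4}$, where both composites give $0$, resp. $t_{i4}$;
$\vec X_{44}\mapsto\vec X_{33}+\vec X_{55}+2\vec X_{35}$ and $\vec X_{i4}\mapsto\vec X_{i3}+\vec X_{i5}$ for $i=1,2$.
Read literally, your bullet for $\vec X_{33}$ compares $2z_3^{1,2,4}$ with $\underline\pi_3(\vec X_{33})^{1,2,4}=0$, and that comparison fails. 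However, the values you computed are exactly the correct ones for $\vec X_{44}$ and $\vec X_{i4}$, so relabeling those two bullets fixes the argument.
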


\begin{proof}
(a) The proposed assignment  $\vec X_{ij}\mapsto \underline\pi_n(\vec X_{ij})$ induces a linear map 
$\mathfrak P_{\vec{n+1}}[1]\to \mathfrak t_n[1]$ in degree 1: the relation $\sum_j\vec X_{ij}=0$ is obviously preserved if
$i\neq n+1$, and it is preserved as a consequence of $2z_n=\sum_{i=1}^n t_{i,1...\check i...n}$ if $i=n+1$. 
Moreover, the composition of $\vec X_{ij}\mapsto \underline\pi_n(\vec X_{ij})$ with the projection 
$\mathfrak t_n\to\mathfrak P_n$ corresponds to the canonical Lie algebra morphism $\mathfrak P_{\vec{n+1}}\to \mathfrak P_{n+1}$. 
The result follows.

(b) For $i\neq j\in[1,3]$, one has $\underline\pi_4(X_{ij}^{1,2,3,45})=\underline\pi_4(X_{ij})=t_{ij}=
t_{ij}^{1,2,3}=\underline\pi_3(X_{ij})^{1,2,3}$ and for $i\in[1,3]$, one has 
$\underline\pi_4(X_{ii}^{1,2,3,45})=\underline\pi_4(0)=0=0^{1,2,3}=\underline\pi_3(X_{ii})^{1,2,3}$, which
implies the commutativity of the left diagram. 

Similarly, for $i\neq j\in[1,3]$, one has $\underline\pi_4(X_{ij}^{1,2,4,35})=\underline\pi_4(X_{\beta^{-1}(ij)})=t_{\beta^{-1}(ij)}=
t_{ij}^{1,2,4}=\underline\pi_3(X_{ij})^{1,2,4}$, where $\beta$ is as in \eqref{def:alpha:beta}, and for $i\in[1,3]$, one has 
$\underline\pi_4(X_{ii}^{1,2,4,35})=\underline\pi_4(0)=0=0^{1,2,4}=\underline\pi_3(X_{ii})^{1,2,4}$, which
implies the commutativity of the right diagram. 
\end{proof}

\begin{lemma}\label{underlinepi:lem8b:part}
For $n\geq2$, the morphism $\underline\pi_n : \mathfrak P_{\vec{n+1}}\to\mathfrak t_n$ is graded, surjective, 
and is an isomorphism in degree $>1$. 
%satisfies the equivalent conditions of Lem. \ref{lem:general}(b). 
\end{lemma}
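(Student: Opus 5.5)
Gradedness is immediate, since $\underline\pi_n$ sends each degree-one generator $\vec X_{ij}$ to a homogeneous element of degree $1$ of $\mathfrak t_n$ (either $0$, $2z_n$, $t_{ij}$ or $-t_{i,1\ldots\check i\ldots n}$). Surjectivity follows because the image contains $\underline\pi_n(\vec X_{ij})=t_{ij}$ for all $i\neq j\in[1,n]$, and these elements generate $\mathfrak t_n$ as a Lie algebra.

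For the isomorphism in degrees $>1$, I will not compare presentations directly. Instead I will factor through $\mathfrak P_{n+1}$ using the two projections already at hand. By the proof of Lem.~\ref{lem:friday}(a), one has
\begin{equation*}
\pi_n\circ\underline\pi_n=\vec\pi_n ,
\end{equation*}
since both sides are Lie algebra morphisms agreeing on the generators $\vec X_{ij}$. (For $i\in[1,n]$, the element $-t_{i,1\ldots\check i\ldots n}$ maps to $X_{i,n+1}$ by the definition of $X_{i,n+1}$. Also $2z_n$ and $0$ map to $0=X_{ii}$, reading $X_{ii}$ as $0$ in $\mathfrak P_{n+1}$.) By Lem.~\ref{lem:descr:center:pn}(b), $\ker\vec\pi_n$ is supported in degree $1$, so $\vec\pi_n$ is injective in degrees $>1$. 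The map $\vec\pi_n$ is also surjective, since the $X_{ij}$ generate $\mathfrak P_{n+1}$. Hence $\vec\pi_n[d]$ is a bijection for every $d>1$. Likewise, $\ker\pi_n=Z(\mathfrak t_n)=\mathbf kz_n$ lies in degree $1$, so $\pi_n[d]$ is a bijection for $d>1$.

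It then follows that for $d>1$,
\begin{equation*}
\underline\pi_n[d]=(\pi_n[d])^{-1}\circ\vec\pi_n[d]
\end{equation*}
is a composition of bijections, hence a bijection.

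The only point needing care is the identity $\pi_n\circ\underline\pi_n=\vec\pi_n$ on the generators with an index equal to $n+1$, together with the convention for $X_{ii}$ in $\mathfrak P_{n+1}$. Both are direct checks from the definitions in \S\ref{sect:1:1}, so I expect no real obstacle. The substantive input is the degree-$1$ support of $\ker\vec\pi_n$ from Lem.~\ref{lem:descr:center:pn}(b), which I am taking as given.
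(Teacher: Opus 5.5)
Your proof is correct and follows the paper's argument. Surjectivity comes from the generators $t_{ij}$, and the isomorphism in degrees $>1$ comes from the factorization $\pi_n\circ\underline\pi_n=\vec\pi_n$, with both $\vec\pi_n$ and $\pi_n$ bijective in degrees $>1$; you just make explicit the routine checks (gradedness, the generators with index $n+1$, surjectivity of $\vec\pi_n$) that the paper leaves implicit.
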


\begin{proof} Since the image of $\underline\pi_n$ contains a generating set of $\mathfrak t_n$, it is surjective.  
Its composition with $\pi_n : \mathfrak t_n\to \mathfrak P_{n+1}$ is the
projection $\mathfrak P_{\vec{n+1}}\to\mathfrak P_{n+1}$, which was shown in Lem. \ref{lem:descr:center:pn}
to be an isomorphism in degrees $>1$; since this is also the case of $\pi_n$, it follows that 
$\underline\pi_n$ is an isomorphism in degrees $>1$. 
%By Lem. \ref{lem:descr:center:pn}, the center of $\mathfrak P_{\vec{n+1}}$ is supported in degree 1. 
\end{proof}

Let $\alpha : \{1,2,4\}\to [1,3]$ %and $\beta : \{12,14,24\}\to\{12,13,23\}$ 
be the bijection defined by 
\begin{equation}\label{def:alpha:beta}
    \alpha : 1,2,4\mapsto 1,2,3.%\quad \beta : 12,14,24\mapsto 12,13,23. 
\end{equation}

\begin{lemma}\label{lem:section:A}
Let $A : \{12,13,23\}\times[1,3]\to \mathbf k$, $(ij,k)\mapsto A_{ij}^k$ be a map.

(a) There is are uniquely defined graded Lie algebra morphisms $\underline\sigma^A : \mathfrak t_3\to\mathfrak P_{\vec4}$
and $\underline\tau^A : \mathfrak t_4\to\mathfrak P_{\vec5}$ and
$\underline\theta^A : \mathfrak t_4\to\mathfrak P_{\vec5}$, such that 
$$
\textstyle\underline\sigma^A : t_{ij}\mapsto \vec X_{ij}+\sum_{k=1}^3 A_{ij}^k \vec X_{kk}\text{ for }ij\in\{12,13,23\}, 
$$
$$
\textstyle\underline\tau^A : t_{ij}\mapsto \vec X_{ij}+\sum_{k=1}^3 A_{ij}^k \vec X_{kk}\text{ for }ij\in\{12,13,23\},\quad 
t_{i4}\mapsto \vec X_{i4}\text{ for }i\in\{1,2,3\}; 
$$
and
$$
\textstyle\underline\theta^A : t_{ij}\mapsto \vec X_{ij}+\sum_{k\in\{1,2,4\}} B_{ij}^k \vec X_{kk}\text{ for }ij\in\{12,14,24\},\quad 
t_{i3}\mapsto \vec X_{i3}\text{ for }i\in\{1,2,4\}, 
$$
where $B : 
\{12,14,24\}\times\{1,2,4\}\to \mathbf k$ is the map defined by $B_{ij}^k:=A_{\alpha(i)\alpha(j))}^{\alpha(k)}$, where 
$\alpha$ is as in \eqref{def:alpha:beta}. 

(b) $\underline\sigma^A$ is a section of $\underline\pi_3$ and $\underline\tau^A,\underline\theta^A$ are both sections of 
$\underline\pi_4$. 

(c) The following diagrams are commutative 
$$
\xymatrix@C=4em
{ \mathfrak t_3 \ar^{x\mapsto x^{1,2,3}}[r]\ar_{\underline\sigma^A}[d]&\mathfrak t_4\ar^{\underline\tau^A}[d] 
\\\mathfrak P_{\vec4}\ar^{x\mapsto x^{1,2,3,45}}[r] & \mathfrak P_{\vec5}} \text{ and }
\xymatrix@C=4em
{ \mathfrak t_3 \ar^{x\mapsto x^{1,2,4}}[r]\ar_{\underline\sigma^A}[d]&\mathfrak t_4\ar^{\underline\theta^A}[d] 
\\\mathfrak P_{\vec4}\ar^{x\mapsto x^{1,2,4,35}}[r] & \mathfrak P_{\vec5}}  
$$
\end{lemma}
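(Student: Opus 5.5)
The plan is to work directly with the presentations, checking everything on generators.

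\textbf{(a) Existence and uniqueness.} Uniqueness is immediate, since $\mathfrak t_3$ and $\mathfrak t_4$ are generated by the $t_{ij}$. For existence, the proposed images are all of degree $1$, so it suffices to check that the defining relations of $\mathfrak t_3$ (resp. $\mathfrak t_4$) are sent to $0$. The key observation is that each proposed image of $t_{ij}$ differs from some $\vec X_{i'j'}$ by a linear combination of the $\vec X_{kk}$. These are central by relation (d), so they drop out of every bracket. The relations $t_{ji}=t_{ij}$ are built in, since the maps are specified on unordered pairs and $\vec X_{ji}=\vec X_{ij}$. It then remains to check the two bracket relations in $\mathfrak P_{\vec4}$, resp. $\mathfrak P_{\vec5}$.

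\emph{Relations $[t_{ij},t_{kl}]=0$, $i,j,k,l$ distinct (only for $\mathfrak t_4$).} These are sent, up to central terms, to $[\vec X_{i'j'},\vec X_{k'l'}]$ with distinct indices, which vanishes by relation (b).

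\emph{Relations $[t_{ij}+t_{ik},t_{jk}]=0$, $i,j,k$ distinct.} By relation (c), $\vec X_{ij}+\vec X_{ik}=-\vec X_{ii}-\sum_{l\notin\{i,j,k\}}\vec X_{il}$. The term $\vec X_{ii}$ is central, and each $\vec X_{il}$ commutes with $\vec X_{jk}$ by (b), since $l\notin\{i,j,k\}$. So $[\vec X_{ij}+\vec X_{ik},\vec X_{jk}]=0$ in both $\mathfrak P_{\vec4}$ and $\mathfrak P_{\vec5}$. For $\underline\theta^A$ the same argument applies after relabelling indices via $\alpha$; the images of the $t_{ij}$ are still of the form $\vec X_{i'j'}+{}$central.

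\textbf{(b) Section property.} Both sides are Lie algebra morphisms, so it suffices to compare them on generators using Lem. \ref{lem:friday}(a). Since $\underline\pi_n(\vec X_{kk})=0$ for $k\le n$ and $\underline\pi_n(\vec X_{ij})=t_{ij}$ for $i\neq j\le n$:
\begin{itemize}
\item for $\underline\sigma^A$, one gets $\underline\pi_3\underline\sigma^A(t_{ij})=t_{ij}$;
\item for $\underline\tau^A$ and $\underline\theta^A$, one gets $\underline\pi_4\underline\tau^A=\underline\pi_4\underline\theta^A=\mathrm{id}$ on generators.
\end{itemize}
In the case of $\underline\theta^A$, the only diagonal terms that occur are $\vec X_{kk}$ with $k\in\{1,2,4\}$. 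In particular $\vec X_{55}$, which maps to $2z_4$, never occurs.

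\textbf{(c) Commutativity of the diagrams.} All arrows are Lie algebra morphisms, so it suffices to check on $t_{12},t_{13},t_{23}$.

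\emph{Left square.} Since $x\mapsto x^{1,2,3,45}$ fixes $\vec X_{ij}$ and $\vec X_{kk}$ for $i,j,k\le3$, both paths send $t_{ij}$ to $\vec X_{ij}+\sum_kA^k_{ij}\vec X_{kk}$.

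\emph{Right square.} The map $x\mapsto x^{1,2,4,35}$ comes from $\phi:[1,5]\to[1,4]$ with $\phi(1)=1$, $\phi(2)=2$, $\phi(4)=3$ and $\phi(3)=\phi(5)=4$. For $i,j,k\le3$ it sends $\vec X_{ij}$ to $\vec X_{\alpha^{-1}(i)\alpha^{-1}(j)}$ and $\vec X_{kk}$ to $\vec X_{\alpha^{-1}(k)\alpha^{-1}(k)}$. Meanwhile $t_{ij}^{1,2,4}=t_{\alpha^{-1}(i)\alpha^{-1}(j)}$. Hence the two paths agree precisely because of the definition $B^k_{ij}=A^{\alpha(k)}_{\alpha(i)\alpha(j)}$.

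This index bookkeeping in the right square is the only step needing care; no real obstacle is expected.
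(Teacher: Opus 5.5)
Your proof is correct. Your treatment of (b) and of the left square in (c) matches the paper's. The two places where you take a different route are (a) and the right square of (c).

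\textbf{Part (a).} The paper does no relation-checking. It observes that the degree-one assignments lift the morphism $\pi_n:\mathfrak t_n\to\mathfrak P_{n+1}$ through $\vec\pi_n:\mathfrak P_{\vec{n+1}}\to\mathfrak P_{n+1}$. Since the defining relations of $\mathfrak t_n$ are quadratic, they hold in $\mathfrak P_{\vec{n+1}}$: they hold after applying $\vec\pi_n$, and $\vec\pi_n$ is injective in degree $2$. This last point is used implicitly; it follows because $\ker\vec\pi_n$ is spanned by the central $\vec X_{kk}$ (Lem.~\ref{lem:descr:center:pn}(b)). This is the same mechanism as in Lem.~\ref{lem9}.

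Your direct verification from relations (b)--(d) of $\mathfrak P_{\vec{n+1}}$ is longer but fully elementary. It does not depend on Lem.~\ref{lem:descr:center:pn}(b), whose proof in the paper is only sketched. One small correction: no relabelling via $\alpha$ is needed for $\underline\theta^A$. It also sends every $t_{ij}$ to $\vec X_{ij}$ plus a central term, with the same indices.

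\textbf{Right square of (c).} The paper reduces it to the left square using three facts:
\begin{itemize}
\item the identity $x^{1,2,4}=(x^{1,2,3})^{1,2,4,3}$;
\item the identity $y^{1,2,4,35}=(y^{1,2,3,45})^{1,2,4,3,5}$;
\item the auxiliary square $\underline\theta^A\circ(x\mapsto x^{1,2,4,3})=(x\mapsto x^{1,2,4,3,5})\circ\underline\tau^A$.
\end{itemize}
The auxiliary square says that $\underline\theta^A$ is the conjugate of $\underline\tau^A$ by the transposition of $3$ and $4$. This explains conceptually where the formula for $\underline\theta^A$ comes from. However, the paper asserts the commutativity of the auxiliary square without proof. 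Checking it is exactly the index computation with $B^k_{ij}=A^{\alpha(k)}_{\alpha(i)\alpha(j)}$ that you carry out directly, so your version is the more complete of the two.
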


\begin{proof}
(a) Let $n\in\{2,3\}$. The proposed assignments define linear maps $\mathfrak t_n[1]\to \mathfrak P_{\vec{n+1}}[1]$
as $(t_{ij})_{i,j:1\leq i<j\leq n}$ is a basis of $\mathfrak t_n[1]$. Their compositions with the projection 
$\mathfrak P_{\vec{n+1}}\to\mathfrak P_{{n+1}}$ corresponds to the morphism $\mathfrak t_n\to \mathfrak P_{{n+1}}$, 
which implies the result. 

(b) One has $\underline\pi_3\underline\sigma^A(t_{ij})=\underline\pi_3(\vec X_{ij}+\sum_{k=1}^3 A_{ij}^k \vec X_{kk})
=t_{ij}$ for $ij\in\{12,13,23\}$, which implies $\underline\pi_3\underline\sigma^A=id$. One also has 
$\underline\pi_4\underline\tau^A(t_{ij})=\underline\pi_4(\vec X_{ij}+\sum_{k=1}^3 A_{ij}^k \vec X_{kk})=t_{ij}$
for $ij\in \{12,13,23\}$ and $\underline\pi_4\underline\tau^A(t_{i4})=\underline\pi_4(\vec X_{i4})=t_{i4}$
for $i\in[1,3]$, hence $\underline\pi_4\underline\tau^A=id$. Finally, one has 
$\underline\pi_4\underline\theta^A(t_{ij})=\underline\pi_4(\vec X_{ij}+\sum_{k\in\{1,2,4\}} B_{ij}^k \vec X_{kk})=t_{ij}$
for $ij\in \{12,14,24\}$ 
and $\underline\pi_4\underline\tau^A(t_{i3})=\underline\pi_4(\vec X_{i3})=t_{i3}$
for $i\in\{1,2,4\}$, hence $\underline\pi_4\underline\theta^A=id$.

(c) For $i<j\in[1,3]$, one has $\underline\sigma_A(t_{ij})^{1,2,3,45}=(\vec X_{ij}+\sum_k A_{ij}^k \vec X_{kk})^{1,2,3,45}
=\vec X_{ij}+\sum_k A_{ij}^k \vec X_{kk}=\underline\tau_A(t_{ij})=\underline\tau_A(t_{ij}^{1,2,3})$, which implies the commutativity 
of the first diagram. The commutativity of the second diagram follows from the equalities  $x^{1,2,4}=(x^{1,2,3})^{1,2,4,3}$
for $x\in\mathfrak t_3$ (equality in $\mathfrak t_4$), $y^{1,2,4,35}=(y^{1,2,3,45})^{1,2,4,3,5}$ for $y\in\mathfrak P_{\vec4}$
(equality in $\mathfrak P_{\vec5}$), and from composing the first diagram with the commutative diagram 
$$
\xymatrix@C=4em
{\mathfrak t_4\ar^{x\mapsto x^{1,2,4,3}}[r]\ar_{\underline\tau_A}[d]&\mathfrak t_4\ar^{\underline\theta_A}[d] \\
\mathfrak P_{\vec5}\ar^{x\mapsto x^{1,2,4,3,5}}[r]& \mathfrak P_{\vec5}}
$$
\end{proof}

\begin{remark}
For $\Sigma$ a surface, let $\mathrm{Cf}_n(\Sigma)$ be the configuration space of $n$ distinct points on $\Sigma$ and 
let $\mathrm{Cf}_{\vec n}(\Sigma)$ be the space of $n$ pairs $(x_1,\vec v_1),...,(x_n,\vec v_n)$, where 
$(x_1,...,x_n)\in \mathrm{Cf}_n(\Sigma)$ and $\vec v_i\in T_{x_i}\Sigma\smallsetminus\{0\}$ is a nonzero tangent vector
at $x_i$ for any $i$. Then $P_n(\Sigma):=\pi_1\mathrm{Cf}_n(\Sigma)$ (resp. $P_{\vec n}(\Sigma):=\pi_1\mathrm{Cf}_{\vec n}(\Sigma)$) 
is the pure (resp. pure framed) braid group of $\Sigma$ and there is a central extension 
$0\to\mathbb Z^n\to P_{\vec n}(\Sigma)\to P_n(\Sigma)\to 1$. Denoting by $\mathrm{gr}\Gamma$ the graded Lie algebra 
associated with the lower central series filtration of a group $\Gamma$, then one can prove that 
$\mathfrak t_n=\mathrm{gr}P_n(\mathbb R^2)$ while $\mathfrak P_{\vec{n+1}}=\mathrm{gr}P_{\vec{n+1}}(S^2)$. 
Then $\underline\pi_n:\mathfrak P_{\vec{n+1}}\to \mathfrak t_n$ corresponds to the group morphism 
$P_{\vec{n+1}}(S^2)\to P_n(\mathbb R^2)$ induced by the (well-defined) map 
$\mathrm{Cf}_{\vec{n+1}}(S^2)\to \mathrm{Cf}_n(\mathbb R^2)/\mathbb R^2$
given by $((x_1,\vec v_1),...,(x_{n+1},\vec v_{n+1}))\mapsto [\theta(x_1),...,\theta(x_n)]$, 
where $\theta$ is any homographic transformation of $S^2\simeq\mathbb P^1_{\mathbb C}$ taking $(x_{n+1},\vec v_{n+1})$
to $(\infty,\vec v_\infty)$ and $[-]$ is the class modulo the additive action of $\mathbb R^2$, 
$\vec v_\infty\in T_{\infty}\Sigma\smallsetminus\{0\}$ being a fixed nonzero vector. 
\end{remark}

\subsection{An auxiliary result}\label{sect:2:3}

\begin{lemma}\label{lem:LAs}
Let 
$$
\xymatrix{\mathfrak a\ar^i[r]\ar_u[d]&\mathfrak b\ar^v[d]\\\mathfrak c\ar_j[r]&\mathfrak d}
$$
be a commutative diagram of Lie algebra and $\alpha$ (resp. $\beta,\gamma,\delta$) be derivations of 
$\mathfrak a$ (resp. $\mathfrak b,\mathfrak c,\mathfrak d$) such that $i$ (resp. $u,v$) intertwines 
$\alpha$ and $\beta$ (resp. $\alpha$ and $\gamma$, $\beta$ and $\delta$).

Then $u(\mathfrak a)\subset\mathfrak c$ is a Lie subalgebra which is stable under $\gamma$, and the map
$j\vert_{u(\mathfrak a)} : u(\mathfrak a)\to \mathfrak d$ intertwines the derivations 
$\gamma\vert_{u(\mathfrak a)}$ and $\delta$. 
\end{lemma}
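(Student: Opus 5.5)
Both claims follow directly from the definitions, using only the intertwining relations $\beta\circ i=i\circ\alpha$, $\gamma\circ u=u\circ\alpha$ and $\delta\circ v=v\circ\beta$, together with the commutativity $v\circ i=j\circ u$.

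First, $u(\mathfrak a)$ is a Lie subalgebra of $\mathfrak c$ because it is the image of a Lie algebra morphism. It is stable under $\gamma$: for $x\in\mathfrak a$ one has $\gamma(u(x))=u(\alpha(x))\in u(\mathfrak a)$. Hence $\gamma\vert_{u(\mathfrak a)}$ is a well-defined endomorphism of $u(\mathfrak a)$. It is a derivation of $u(\mathfrak a)$, since it is the restriction of the derivation $\gamma$ to a stable subalgebra.

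Next, we check the intertwining. An element of $u(\mathfrak a)$ has the form $u(x)$ with $x\in\mathfrak a$, and
$$
j(\gamma(u(x)))=j(u(\alpha(x)))=v(i(\alpha(x)))=v(\beta(i(x)))=\delta(v(i(x)))=\delta(j(u(x))).
$$
The steps use, in order: the intertwining for $u$, the commutativity of the square, the intertwining for $i$, the intertwining for $v$, and the commutativity of the square again. This gives $j\vert_{u(\mathfrak a)}\circ\gamma\vert_{u(\mathfrak a)}=\delta\circ j\vert_{u(\mathfrak a)}$.

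There is no real obstacle. The only point needing care is the bookkeeping of the direction of each intertwining relation. It is also worth noting that the argument never uses a preimage choice: we always work with $u(x)$ for a fixed $x$, so the result holds even if $u$ is not injective.
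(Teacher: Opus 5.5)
Your proposal is correct and follows the paper's proof essentially verbatim: image of a morphism gives the subalgebra, $\gamma(u(x))=u(\alpha(x))$ gives stability, and the same five-step chain (intertwining for $u$, commutativity, intertwining for $i$, intertwining for $v$, commutativity) gives $j\circ\gamma=\delta\circ j$ on $u(\mathfrak a)$. The paper phrases it by choosing a lift $a$ of $c\in u(\mathfrak a)$, which is the same argument as yours with $c=u(x)$.
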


\begin{proof}
The subspace $u(\mathfrak a)$ is a Lie subalgebra of $\mathfrak c$ as it is the image of a Lie algebra morphism. 
Let $c\in u(\mathfrak a)$ and let $a\in\mathfrak a$ be a lift of $c$ by $u$. Then: 
$$
\gamma(c)=\gamma(u(a))=u(\alpha(a))\in u(\mathfrak a),
$$
where the first equality follows from the definition of $a$ and the second equality follows from the intertwining property of $u$; 
and 
$$
j(\gamma(c))=j(\gamma(u(a)))=j(u (\alpha(a)))=v(i(\alpha(a)))=v(\beta(i(a)))=\delta(v(i(a)))=\delta(j(u(a)))=\delta(j(c)), 
%=\delta(v(i(a)))=\delta(j(u(a)))=\delta(j(c)), 
$$
where the first and last equalities follow from the definition of $a$, the second (resp. fourth, fifth) 
equality follows from the intertwining property of $u$ (resp. $i$, $v$), and the third and sixth equalities follow 
from the commutativity of the diagram. 

The first (resp. second) identity implies the first (resp. second) part of the statement. 
\end{proof}

\subsection{Characterization of $\mathfrak{grt}_1$ in terms of the Lie algebras $\mathfrak P_{\vec4}$ and $\mathfrak P_{\vec5}$}
\label{sect:2:4}

\begin{lemma}\label{def:D:vecC}
For $C\in \mathfrak G_{\inert}$ (see \S\ref{sect:12}), there is a unique derivation $\vec D_C$ of $\mathfrak P_{\vec 4}$, such that 
$$
\vec X_{11},\ldots,\vec X_{44}\mapsto 0,\quad \vec X_{12},\vec X_{34}\mapsto 0,\quad \vec X_{23},\vec X_{14}\mapsto [C(\vec X_{32},\vec X_{21}),\vec X_{23}],\quad 
\vec X_{13},\vec X_{24}\mapsto [h_C(\vec X_{32},\vec X_{21}),\vec X_{13}]. 
$$
\end{lemma}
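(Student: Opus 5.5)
The plan is to construct $\vec D_C$ by lifting $D_C$ through a graded section, rather than checking all relations of $\mathfrak P_{\vec 4}$ by hand. Uniqueness is immediate, since the $\vec X_{ij}$ generate $\mathfrak P_{\vec 4}$. For existence, apply Lem. \ref{underlinepi:lem8b:part}: $\underline\pi_3:\mathfrak P_{\vec 4}\to\mathfrak t_3$ is graded, surjective, and an isomorphism in degrees $>1$. By Lem. \ref{lem:descr:center:pn} the center $Z(\mathfrak P_{\vec4})$ is spanned by the $\vec X_{ii}$, hence supported in degree $1$. The hypotheses of Lem. \ref{lem:corresp} therefore hold with $\mathfrak a=\mathfrak P_{\vec4}$ and $\mathfrak b=\mathfrak t_3$. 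Taking $\sigma=\underline\sigma^A$ with $A=0$ (Lem. \ref{lem:section:A}), the map $s:E\mapsto\sigma E\underline\pi_3$ sends $\mathfrak{der}(\mathfrak t_3)[>1]$ to $\mathfrak{der}(\mathfrak P_{\vec4})[>1]$. Since $C\in\mathfrak G$ is supported in degree $>1$, so is $h_C$, and $D_C$ raises degree by at least $1$. We set $\vec D_C:=s(D_C)$, after decomposing $D_C$ into homogeneous components so that each lies in $\mathfrak{der}(\mathfrak t_3)[>1]$ in the sense of the lemma.

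It then remains to compute $\vec D_C$ on the generators and match the statement. Using $\underline\pi_3(\vec X_{ii})=0$ for $i\le3$ and $\underline\pi_3(\vec X_{44})=2z_3$, which is central and hence killed by $D_C$, we get $\vec D_C(\vec X_{ii})=0$. Next, $\underline\pi_3(\vec X_{12})=t_{12}\mapsto0$. For $\vec X_{34}$ we have $\underline\pi_3(\vec X_{34})=-t_{3,12}=t_{12}-z_3$, which $D_C$ also kills. For $\vec X_{23}$ we get $\sigma([C(t_{32},t_{21}),t_{23}])=[C(\vec X_{32},\vec X_{21}),\vec X_{23}]$, since $\sigma$ is a Lie morphism with $\sigma(t_{ij})=\vec X_{ij}$. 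Similarly $\underline\pi_3(\vec X_{14})=-t_{1,23}=t_{23}-z_3$, so it has the same image as $\vec X_{23}$. Likewise $\vec X_{13}$ and $\vec X_{24}$ (the latter mapping to $t_{13}-z_3$) both go to $\sigma([h_C(t_{32},t_{21}),t_{13}])=[h_C(\vec X_{32},\vec X_{21}),\vec X_{13}]$.

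The main obstacle is bookkeeping rather than conceptual. One has to check that the derivation produced by Lem. \ref{lem:corresp} is genuinely a derivation of $\mathfrak P_{\vec4}$, in particular that it vanishes on the degree-one center; this is exactly the argument around \eqref{temp}. One also has to verify carefully the images of $\vec X_{i4}$ under $\underline\pi_3$ from Lem. \ref{lem:friday}(a). If the lemma's degree conventions give trouble, the fallback is direct: define $\vec D_C$ on generators as stated and check relations (a)–(d) of $\mathfrak P_{\vec4}$. Relations (a), (d) and (c) reduce to the inertia identity $[C(\cdot),\vec X_{23}]+[h_C(\cdot),\vec X_{13}]=0$, obtained from \eqref{inert:C:in:t3} transported via $\sigma$. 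Relation (b) reduces to $[\vec X_{12},\vec X_{34}]=0$ and $[\vec X_{13},\vec X_{24}]=[\vec X_{14},\vec X_{23}]=0$, where it holds because $\vec X_{14}\equiv\vec X_{23}$ and $\vec X_{24}\equiv\vec X_{13}$ modulo the center.
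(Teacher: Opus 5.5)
Your proof is correct, but your main route differs from the paper's.

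\textbf{The paper's route.} The paper proves the lemma the way your fallback does. It checks directly that the assignment on generators respects the defining relations of $\mathfrak P_{\vec 4}$:
\begin{itemize}
\item centrality of the $\vec X_{ii}$ is preserved because they are sent to $0$;
\item the relations $[\vec X_{ij},\vec X_{kl}]=0$ are preserved because complementary pairs have equal images and are congruent modulo the center;
\item the relations $\sum_j\vec X_{ij}=0$ follow from the inertia identity $[C(e_0,e_1),e_0]+[h_C(e_0,e_1),e_\infty]=0$, specialized at $e_0\mapsto\vec X_{23}$, $e_1\mapsto\vec X_{12}$, since $e_\infty$ then goes to $\vec X_{13}$ plus a central element.
\end{itemize}

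\textbf{Your route.} You set $\vec D_C:=\underline\sigma^0 D_C\underline\pi_3$. Well-definedness is then inherited from Lem.~\ref{lem:def:DC}, where the inertia identity has already been used. The stated formulas follow from $\underline\pi_3(\vec X_{i4})=t_{jk}-z_3$ for $\{i,j,k\}=\{1,2,3\}$, together with $\underline\pi_3(\vec X_{44})=2z_3$ and $D_C(z_3)=0$. Your computations of these images are correct.

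\textbf{What each buys.} Your approach gives $\vec D_C=s(D_C)$ for free, which is essentially the content of Lem.~\ref{lem:inter:DC}, proved later in the paper. The cost is reliance on the section and center machinery (Lems.~\ref{lem:descr:center:pn}, \ref{underlinepi:lem8b:part}, \ref{lem:section:A}, \ref{lem:corresp}). The paper's check is elementary and self-contained.

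\textbf{Two minor points.}
\begin{itemize}
\item The obstacle you flag is not really one. That $\sigma E\underline\pi_3$ is a derivation uses only two facts: $\sigma$ is a Lie morphism, and $\sigma\underline\pi_3(y)-y$ lies in $\ker\underline\pi_3$, which is central. So no homogeneous decomposition is needed. The argument around \eqref{temp} concerns $sp=\mathrm{id}$, which you do not need.
\item You should justify $D_C(z_3)=0$ explicitly. One way: a derivation maps the center $\mathbf k z_3$, which sits in degree $1$, into itself, while $D_C$ raises degree (in fact by at least $2$). Alternatively, use the inertia computation in the proof of Lem.~\ref{lem:def:DC}.
\end{itemize}
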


\begin{proof} Let us check that the defining relations of $\mathfrak P_{\vec 4}$ are preserved under $\vec D_C$. 
Since $\vec X_{ii}\mapsto 0$ for any $i$, the relation of centrality of $\vec X_{ii}$ is preserved. 
Since the images of $\vec X_{ij}$ and $\vec X_{kl}$ are equal 
for distinct $i,j,k,l$, the relation 
$[\vec X_{ij},\vec X_{kl}]=0$ is preserved. Finally the specialization $e_0\mapsto\vec X_{23},
e_1\mapsto\vec X_{12}$ of the relation $[C(e_0,e_1),e_0]+[h_C(e_0,e_1),e_\infty]=0$ 
implies that the linear relations $\sum_j\vec X_{ij}=0$ are preserved for any $i$, as this specialization 
takes  $e_\infty$ to $\vec X_{13}+\sum_{a=1}^4\mathbf k\vec X_{aa}$.  
\end{proof}

\begin{lemma}\label{lem:inter:DC}
Let $\sigma:\mathfrak t_3\to\mathfrak P_{\vec4}$ be a graded section of $\underline\pi_3:\mathfrak P_{\vec4}\to \mathfrak t_3$ and 
let $C\in\mathfrak G_\inert$. 

(a) The bijection $p:=\mathfrak{der}(\mathfrak P_{\vec4})[>1]\to \mathfrak{der}(\mathfrak t_{3})[>1]$ from Lem. \ref{lem:corresp}(a) 
corresponding to  $\sigma$ takes $\vec D_C$ to $D_C$, i.e. $D_C=p(\vec{D}_C)$. 

(b) The morphism $\underline\pi_3 : \mathfrak P_{\vec4}\to\mathfrak t_3$ intertwines $\vec D_C$ and $D_C$.

(c) $\sigma$ intertwines $D_C$ and $\vec D_C$. 
\end{lemma}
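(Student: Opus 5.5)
The plan is to set up the bijection first, which settles the degree conditions, and then reduce all three parts to the single identity $\underline\pi_3\vec D_C=D_C\underline\pi_3$, i.e. to part (b).

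\textbf{Setting up the bijection.} By Lem. \ref{underlinepi:lem8b:part}, $\underline\pi_3:\mathfrak P_{\vec4}\to\mathfrak t_3$ is graded, surjective, and an isomorphism in degrees $>1$. By Lem. \ref{lem:descr:center:pn}(b), the center of $\mathfrak P_{\vec4}$ is supported in degree $1$, so the hypotheses of Lem. \ref{lem:corresp} hold. Hence $p:\vec D\mapsto\underline\pi_3\vec D\sigma$ is a bijection $\mathfrak{der}(\mathfrak P_{\vec4})[>1]\to\mathfrak{der}(\mathfrak t_3)[>1]$. By Lem. \ref{lem:indpce} it does not depend on $\sigma$. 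Since $C$, and therefore $h_C$, is supported in degree $>1$, both $\vec D_C$ and $D_C$ lie in the degree $>1$ parts.

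\textbf{Proof of (b).} Both $\underline\pi_3\circ\vec D_C$ and $D_C\circ\underline\pi_3$ are derivations $\mathfrak P_{\vec4}\to\mathfrak t_3$ along the morphism $\underline\pi_3$. It therefore suffices to compare them on the generators $\vec X_{ij}$.

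\begin{itemize}
\item On $\vec X_{ii}$: the left side is $0$. For $i\le3$ the right side is $D_C(0)=0$, and $D_C(2z_3)=0$ because $z_3$ is central and $D_C(z_3)=0$ (proof of Lem. \ref{lem:def:DC}).
\item On $\vec X_{12},\vec X_{23},\vec X_{13}$: the left sides are $0$, $[C(t_{32},t_{21}),t_{23}]$ and $[h_C(t_{32},t_{21}),t_{13}]$, using $\underline\pi_3(\vec X_{ij})=t_{ij}$. These agree with $D_C(t_{ij})$ by Lem. \ref{lem:def:DC}.
\item On $\vec X_{34}$: $\underline\pi_3(\vec X_{34})=-t_{3,12}=t_{12}-z_3$, so $D_C$ gives $0$, matching.
\item On $\vec X_{14}$: $\underline\pi_3(\vec X_{14})=-t_{1,23}=t_{23}-z_3$. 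So $D_C$ gives $[C(t_{32},t_{21}),t_{23}]$, which is the image of $\vec D_C(\vec X_{14})$.
\item On $\vec X_{24}$: $\underline\pi_3(\vec X_{24})=t_{13}-z_3$, which gives $[h_C,t_{13}]$, again matching.
\end{itemize}

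Since the two derivations agree on a generating set, they are equal.

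\textbf{Proof of (a).} From (b), $p(\vec D_C)=\underline\pi_3\vec D_C\sigma=D_C\underline\pi_3\sigma=D_C$, because $\underline\pi_3\sigma=\mathrm{id}$.

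\textbf{Proof of (c).} By Lem. \ref{lem:corresp}(b), $\sigma$ intertwines $p(\vec D_C)$ and $\vec D_C$, and $p(\vec D_C)=D_C$ by (a).

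\textbf{Main obstacle.} The only delicate point is the bookkeeping in the generator check for (b): tracking the central corrections $z_3$ and $\vec X_{44}\mapsto 2z_3$, and checking that they are killed by $D_C$ and by $\mathrm{ad}$. This works because $C$ and $h_C$ have no degree-one terms, so substituting arguments that differ by central elements does not change $C(\ldots)$ or $h_C(\ldots)$.
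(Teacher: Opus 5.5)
Your proof is correct, but it runs the deduction in the opposite direction from the paper.

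The paper proves (a) first. It writes an arbitrary graded section as $\sigma(t_{ij})=\vec X_{ij}+\sum_k\sigma_{ij}^k\vec X_{kk}$ and evaluates $\underline\pi_3\vec D_C\sigma$ only on the three generators $t_{12},t_{23},t_{13}$ of $\mathfrak t_3$; the $\vec X_{kk}$-terms are killed by $\vec D_C$. It then obtains (b) and (c) together from Lem.~\ref{lem:corresp}(b).

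You instead verify (b) directly on all ten generators of $\mathfrak P_{\vec4}$. This makes you handle $\underline\pi_3(\vec X_{i4})=-t_{i,jk}$ and $\underline\pi_3(\vec X_{44})=2z_3$ through $D_C(z_3)=0$. After that, (a) is the one-line computation $\underline\pi_3\vec D_C\sigma=D_C\underline\pi_3\sigma=D_C$, while (c) still uses the $\sigma$-half of Lem.~\ref{lem:corresp}(b).

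The trade-off is as follows. The paper's route is shorter and never needs $\underline\pi_3$ on the index-$4$ generators, because intertwining by $\underline\pi_3$ comes for free from the fact that degree $>1$ derivations kill the central kernel $\ker\underline\pi_3$. Your route makes (b) independent of that argument. As a by-product, it checks explicitly that the prescribed values of $\vec D_C$ on $\vec X_{14},\vec X_{24},\vec X_{34}$ are compatible with $D_C$.

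Your closing remark about substituting arguments that differ by central elements into $C$ and $h_C$ is not actually used. $\underline\pi_3$ sends $\vec X_{32},\vec X_{21}$ exactly to $t_{32},t_{21}$, and the central corrections enter only through $D_C(z_3)=0$.
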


\begin{proof} (a) There exists scalars $(\sigma_{ij}^k)_{ij,k}$ such that 
$\sigma(t_{ij})=\vec X_{ij}+\sum_{k=1}^4 \sigma_{ij}^k \vec X_{kk}$. Then 
$\underline\pi_3 \vec D_C\sigma(t_{12})=\underline\pi_3 \vec D_C(\vec X_{12}+\sum_{i=1}^4 \sigma_{12}^i \vec X_{ii})=0=D_C(t_{12})$. 
Similarly, $\underline\pi_3 \vec D_C\sigma(t_{23})=\underline\pi_3 \vec D_C(\vec X_{23}+\sum_{i=1}^4 \sigma_{23}^i \vec X_{ii})=\underline\pi_3([C(\vec X_{32},\vec X_{21}),\vec X_{23}])=[C(t_{32},t_{21}),t_{23}]=D_C(t_{12})$ and 
$\underline\pi_3 \vec D_C\sigma(t_{13})=\underline\pi_3 \vec D_C(\vec X_{13}+\sum_{i=1}^4 \sigma_{13}^i 
\vec X_{ii})=\underline\pi_3([h_C(\vec X_{32},\vec X_{21}),\vec X_{13}])=[h_C(t_{32},t_{21}),t_{13}]=D_C(t_{13})$. 
Therefore $D_C=\underline\pi_3 \vec D_C\sigma=p(\vec D_C)$. 
(b) and (c) follow from (a) and from Lem. \ref{lem:corresp}(b). 
\end{proof}

We will show: 
\begin{theorem}\label{thm:main:2}
The subset $\mathfrak{grt}_1$ of $\mathfrak G$ 
is equal to the set of all elements $C\in\mathfrak G_\inert$, such that there exists a derivation 
$\vec D$ of $\mathfrak P_{\vec 5}$ and $\vec X\in\mathfrak P_{\vec 5}$, such that: 

\emph{\((a')\)} 
the morphism $\mathfrak P_{\vec 4}\to\mathfrak P_{\vec 5}$, $x\mapsto x^{1,2,3,45}$ intertwines $\vec D_C$ and $\vec D$, and 

\emph{\((b')\)} 
the morphism $\mathfrak P_{\vec 4}\to\mathfrak P_{\vec 5}$, $x\mapsto x^{1,2,4,35}$ intertwines $\vec D_C$ and 
$\vec D+\mathrm{ad}_{\vec X}$. 
\end{theorem}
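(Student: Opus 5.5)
The plan is to reduce Thm. \ref{thm:main:2} to Thm. \ref{thm:main}, transporting derivations along the morphisms $\underline\pi_n$ and their sections $\underline\sigma^A,\underline\tau^A,\underline\theta^A$ from Lem. \ref{lem:section:A}. By Lem. \ref{underlinepi:lem8b:part} and Lem. \ref{lem:descr:center:pn}, the map $\underline\pi_4:\mathfrak P_{\vec5}\to\mathfrak t_4$ is surjective, graded, an isomorphism in degrees $>1$, and its source has center supported in degree $1$. So Lem. \ref{lem:corresp} and Lem. \ref{lem:indpce} apply to it, and likewise to $\underline\pi_3$. As in the proof of Cor. \ref{cor:main}, we may restrict throughout to derivations and elements of degree $>1$. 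This is harmless because $C$, and hence $D_C$ and $\vec D_C$, are supported in degrees $>1$: for a given $C$ the intertwining conditions can be split by degree, so the degree $>1$ components of $\vec D$, $\vec X$ (resp. $D$, $X$) still satisfy them.

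\emph{Inclusion $\mathfrak{grt}_1\subset$ RHS.} Let $C\in\mathfrak{grt}_1$, and take $(D,X)$ from Thm. \ref{thm:main}, of degree $>1$. Fix any $A$ and put $\vec D:=s(D)=\underline\tau^A D\underline\pi_4$ with respect to the section $\underline\tau^A$. By Lem. \ref{lem:indpce}, $s$ does not depend on the section, so we also have $\vec D=\underline\theta^A D\underline\pi_4$. Put $\vec X:=\underline\tau^A(X)$; by Lem. \ref{lem:corresp}(c), $s(\mathrm{ad}_X)=\mathrm{ad}_{\vec X}$, and therefore $s(D+\mathrm{ad}_X)=\vec D+\mathrm{ad}_{\vec X}$. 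Condition (a') is then obtained by chaining intertwiners:
\begin{itemize}
\item $\underline\pi_3$ intertwines $\vec D_C$ and $D_C$ (Lem. \ref{lem:inter:DC}(b));
\item $x\mapsto x^{1,2,3}$ intertwines $D_C$ and $D$ (Thm. \ref{thm:main});
\item $\underline\tau^A$ intertwines $D$ and $\vec D$ (Lem. \ref{lem:corresp}(b)).
\end{itemize}
The composite $\underline\tau^A\circ(x\mapsto x^{1,2,3})\circ\underline\pi_3$ equals $x\mapsto x^{1,2,3,45}$ on $\mathfrak P_{\vec4}$ only up to the central elements $\vec X_{kk}$. Since derivations of degree $>1$ kill the center, this discrepancy does not affect intertwining. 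Concretely, by Lem. \ref{lem:friday}(b) we have $\underline\pi_4\circ(x\mapsto x^{1,2,3,45})=(x\mapsto x^{1,2,3})\circ\underline\pi_3$, and applying $\underline\tau^A$ together with $\vec D\underline\tau^A\underline\pi_4=\vec D$ (relation \eqref{temp3}) gives (a'). Condition (b') is proved the same way, using $\underline\theta^A$, the right diagram of Lem. \ref{lem:friday}(b), and the fact that $s(D+\mathrm{ad}_X)$ computed via $\underline\theta^A$ equals $\vec D+\mathrm{ad}_{\vec X}$.

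\emph{Converse inclusion.} Let $C\in\mathfrak G_\inert$ and $(\vec D,\vec X)$ satisfy (a') and (b'), taken of degree $>1$. Set $D:=p(\vec D)=\underline\pi_4\vec D\underline\tau^A$ and $X:=\underline\pi_4(\vec X)$, so that $p(\vec D+\mathrm{ad}_{\vec X})=D+\mathrm{ad}_X$. Apply Lem. \ref{lem:LAs} to the right diagram of Lem. \ref{lem:section:A}(c), in the form $x\mapsto x^{1,2,3}$ on top, $\underline\sigma^A$ and $\underline\tau^A$ vertical, with its three intertwinings supplied as follows:
\begin{itemize}
\item $\underline\sigma^A$ intertwines $D_C$ and $\vec D_C$ (Lem. \ref{lem:inter:DC}(c));
\item $x\mapsto x^{1,2,3,45}$ intertwines $\vec D_C$ and $\vec D$ (condition (a'));
\item $\underline\pi_4$ intertwines $\vec D$ and $D$ (Lem. \ref{lem:corresp}(b)).
\end{itemize}
Since $\underline\pi_3\underline\sigma^A=\mathrm{id}$ and Lem. \ref{lem:friday}(b) holds, composing gives $\underline\pi_4\circ(x\mapsto x^{1,2,3,45})\circ\underline\sigma^A=x\mapsto x^{1,2,3}$, so $x\mapsto x^{1,2,3}$ intertwines $D_C$ and $D$. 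The same argument with $x\mapsto x^{1,2,4,35}$ and $\vec D+\mathrm{ad}_{\vec X}$ shows that $x\mapsto x^{1,2,4}$ intertwines $D_C$ and $D+\mathrm{ad}_X$; here it matters that $p$ is independent of the section (Lem. \ref{lem:indpce}). Thm. \ref{thm:main} then gives $C\in\mathfrak{grt}_1$.

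The main obstacle is to keep careful track of the central degree-$1$ ambiguities: the sections agree with the morphisms $x\mapsto x^{1,2,3,45}$ and $x\mapsto x^{1,2,4,35}$ only modulo $\mathrm{span}(\vec X_{kk})=Z(\mathfrak P_{\vec5})$. One must also justify the reduction to degree $>1$. Both points rest on the fact that derivations of degree $>1$ annihilate the center, as in the proof of Lem. \ref{lem:indpce}.
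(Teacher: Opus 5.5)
Your proof is correct, and its overall strategy is the paper's. You reduce to Thm.~\ref{thm:main} through $\underline\pi_3$, $\underline\pi_4$ and their graded sections, using Lem.~\ref{lem:corresp}, Lem.~\ref{lem:indpce}, Lem.~\ref{lem:inter:DC} and the squares of Lem.~\ref{lem:friday}(b).

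\textbf{Inclusion of the right-hand side in $\mathfrak{grt}_1$.} Here you do essentially what the paper does. The paper applies Lem.~\ref{lem:LAs} to the squares of Lem.~\ref{lem:friday}(b), using surjectivity of $\underline\pi_3$; you compose with the section $\underline\sigma^A$, which is equivalent. Your invocation of Lem.~\ref{lem:LAs} on a diagram of Lem.~\ref{lem:section:A}(c) is misplaced: that lemma goes the other way and would require the very intertwining you want. What actually carries your argument is the identity $\underline\pi_4\circ(x\mapsto x^{1,2,3,45})\circ\underline\sigma^A=(x\mapsto x^{1,2,3})$ together with composition of intertwiners, and that is fine. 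Section-independence of $p$ is not needed there.

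\textbf{Inclusion of $\mathfrak{grt}_1$ in the right-hand side.} This is where your route genuinely differs. The paper sets $\vec D=\underline\tau^0D\underline\pi_4$ and applies Lem.~\ref{lem:LAs} to the lifted squares of Lem.~\ref{lem:section:A}(c). This gives intertwining on each $\underline\sigma^A(\mathfrak t_3)$, and the paper concludes with the spanning fact $\sum_A\underline\sigma^A(\mathfrak t_3)=\mathfrak P_{\vec4}$. That is why it needs the specially adapted section $\underline\theta^A$ for (b'). You instead push down along the projected squares of Lem.~\ref{lem:friday}(b), using two facts:
\begin{itemize}
\item on the left, $\vec D=\vec D\,\underline\tau^A\underline\pi_4$, which is \eqref{temp3};
\item on the right, that $\vec D_C$ takes values in degrees $>1$, where $\underline\tau^A\underline\pi_4$ is the identity.
\end{itemize}
The second fact is a degree argument, not ``derivations kill the center'' as you phrase it. Your route needs neither Lem.~\ref{lem:section:A}(c) nor the spanning fact, and any graded section of $\underline\pi_4$ serves for both (a') and (b').

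It also sidesteps a slip in the paper's parenthetical justification of the spanning fact. The images under $\underline\sigma^{A_1},\underline\sigma^{A_2},\underline\sigma^{A_3}$ span only a $5$-dimensional subspace of the $6$-dimensional $\mathfrak P_{\vec4}[1]$; for instance, they miss $\vec X_{12}$. So one must let $A$ range more widely, e.g.\ also include $A=0$. In exchange, the paper's route treats both inclusions uniformly through Lem.~\ref{lem:LAs}.
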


\begin{proof}
Let $\mathfrak Y$ be the subset of $\mathfrak G_\inert$ defined by the said conditions and recall the subset 
$\mathfrak X$ from the proof of Thm. \ref{thm:main}. We will prove
\begin{equation}\label{incl:Y:X}
    \mathfrak Y\subset \mathfrak X. 
\end{equation}
Let $C\in\mathfrak Y$. Let then $\vec D\in\mathfrak{der}(\mathfrak P_{\vec5})$ and $\vec X\in\mathfrak P_{\vec5}$ be such that 
$(C,\vec D,\vec X)$ satisfies conditions (a') and (b'). Since $C$ is supported in degree $>1$, one may assume the same on
$\vec D$ and $\vec X$. Recall that $\underline\pi_4 : \mathfrak P_{\vec5}\to \mathfrak t_4$ is surjective, an
isomorphism is degree $>1$ (Lem. \ref{underlinepi:lem8b:part}) 
and that $Z(\mathfrak P_{\vec5})$ is supported in degree 1 (see Lem. \ref{lem:descr:center:pn}). 
Let then $X:=\underline\pi_4(\vec X)\in\mathfrak t_4$ and $D:=\underline\pi_4 D\sigma$, where  
$\sigma$ is any graded section of $\underline\pi_3$. It follows from Lem. \ref{lem:corresp}(a) that $D$ is a derivation of 
$\mathfrak t_4$. We now show that $(C,D,X)$ satisfy conditions (a) and (b) from Thm. \ref{thm:main}. 

Consider the left commutative diagram of Lem. \ref{lem:friday}(b). The Lie algebras in this diagram are equipped with the 
derivations $\vec D_C$ and $\vec D$ for the top row, and $D_C$ and $D$ for the bottom row. 
It follows from Lem. \ref{lem:inter:DC}(b) that $\underline\pi_3$ intertwines $\vec D_C$ and $D_C$; it follows from 
the fact that $(C,\vec D,\vec X)$ satisfies conditions (a') 
that $x\mapsto x^{1,2,3,45}$ intertwines $\vec D_C$ and $\vec D$; and it follows from Lem. \ref{lem:corresp}(b) that 
$\underline\pi_4$ intertwines $\vec D$ and $D$. Lem. \ref{lem:LAs}, together with the surjectivity of 
$\underline\pi_3$ (see Lem. \ref{underlinepi:lem8b:part}), 
implies that $x\mapsto x^{1,2,3}$ intertwines $D_C$ and $D$, therefore that $(C,D,X)$ satisfies condition (a)
from Thm. \ref{thm:main}. 

Consider now the right commutative diagram of Lem. \ref{lem:friday}(b). Its Lie algebras are equipped with the 
derivations $\vec D_C$ and $\vec D+\mathrm{ad}_{\vec X}$ for the top row, and $D_C$ and $D+\mathrm{ad}_{X}$ for the bottom row. 
Recall again that that $\underline\pi_3$ intertwines $\vec D_C$ and $D_C$ (Lem. \ref{lem:inter:DC}(b)); it follows from 
the fact that $(C,\vec D,\vec X)$ satisfies conditions (a') 
that $x\mapsto x^{1,2,4,35}$ intertwines $\vec D_C$ and $\vec D+\mathrm{ad}_{\vec X}$; and it follows 
from Lem. \ref{lem:corresp}(b) together with Lem. \ref{lem:corresp}(b) and (c) that 
$\underline\pi_4$ intertwines $\vec D+\mathrm{ad}_{\vec X}$ and $D+\mathrm{ad}_{X}$. Lem. \ref{lem:LAs}, together with the 
surjectivity of $\underline\pi_3$ (see Lem. \ref{underlinepi:lem8b:part}), 
implies that $x\mapsto x^{1,2,4}$ intertwines $D_C$ and $D+\mathrm{ad}_{X}$, 
therefore that $(C,D,X)$ satisfies condition (b) from Thm. \ref{thm:main}. 

This shows \eqref{incl:Y:X}. 

Let us now show
\begin{equation}\label{incl:X:Y}
    \mathfrak X\subset \mathfrak Y. 
\end{equation}
Let $C\in\mathfrak Y$. Let $D\in\mathfrak{der}(\mathfrak t_{4})$ and $X\in\mathfrak t_{4}$ be such that 
$(C,D,X)$ satisfies conditions (a) and (b) from Thm. \ref{thm:hf}. Since $C$ is supported in degree $>1$, 
one may assume the same on $D$ and $X$.

Recall from Lem. \ref{lem:section:A} the section $\underline\tau^0$ of $\underline\pi_3$ corresponding to the zero 
map $0:\{12,13,23\}\times[1,3]\to \mathbf k$. Let $\vec X:=\underline\tau^0(X)$ and 
$\vec D:=\underline\tau^0 D\underline\pi_3$. Then $\vec X\in \mathfrak P_{\vec5}$
and by Lem. \ref{lem:corresp}(a), $\vec D\in \mathfrak{der}(\mathfrak P_{\vec5})$. Let us show that 
$(C,\vec D,\vec X)$ satisfies conditions 
(a') and (b').

Let $A : \{12,13,23\}\times[1,3]\to \mathbf k$ be a map and consider the left commutative diagram of Lie algebras in 
Lem. \ref{lem:section:A}(c). Its Lie algebras are equipped, for the top row, with the derivations $D_C$ and $D$, and for the 
bottom row, with the derivations $\vec D_C$ and $\vec D$. Since $\underline\sigma^A$ is a section of $\underline\pi_3$, it follows from 
Lem. \ref{lem:inter:DC}(c) that it intertwines $\vec D_C$ and $D_C$. It follows from the fact that $(C,D,X)$ satisfies condition (a) from Thm. 
\ref{thm:main} that $x\mapsto x^{1,2,3}$ intertwines $D_C$ and $D$. It follows from 
Lem. \ref{lem:corresp}(b) that $\underline\tau^A$ intertwines $D$ and $\underline\tau^A D\underline\pi_3$, which 
by the equality $\vec D=\underline\tau^A D\underline\pi_3$ (see Lem. \ref{lem:indpce}) implies that 
$\underline\tau^A$ intertwines $D$ and $\vec D$. 
Lem. \ref{lem:LAs} then implies that 
the restriction of $x\mapsto x^{1,2,3,45}$ to $\underline\sigma^A(\mathfrak t_3)$ intertwines 
$\vec D_C$ and $\vec D$. Therefore, the restriction of the linear map  
$\vec D\circ (x\mapsto x^{1,2,3,45})-(x\mapsto x^{1,2,3,45})\circ \vec D_C$ to 
$\underline\sigma^A(\mathfrak t_3)\subset \mathfrak P_{\vec4}$ is zero. 
Since 
$\sum_{A \in \mathbf k^{\{12,13,23\}\times[1,3]}} \underline\sigma^A(\mathfrak t_3)=\mathfrak P_{\vec4}$
(one already has $\underline\sigma^{A_1}(\mathfrak t_3)+\underline\sigma^{A_2}(\mathfrak t_3)
+\underline\sigma^{A_3}(\mathfrak t_3)=\mathfrak P_{\vec4}$ where $A_1,A_2,A_3$ are defined by 
$(A_s)_{ij}^k=\delta_{sk}\delta_{ij,12}$ for $s\in[1,3]$), this linear map is zero, therefore $x\mapsto x^{1,2,3,45}$ intertwines 
$\vec D_C$ and $\vec D$. Therefore $(C,\vec D,\vec X)$ satisfies (a').

Let $A : \{12,13,23\}\times[1,3]\to \mathbf k$ be a map and consider the right commutative diagram of Lie algebras in 
Lem. \ref{lem:section:A}(c). Its Lie algebras are equipped, for the top row, with the derivations $D_C$ and $D+\mathrm{ad}_X$, 
and for the bottom row, with the derivations $\vec D_C$ and $\vec D+\mathrm{ad}_{\vec X}$. 
Recall from the previous paragraph that $\underline\sigma^A$ intertwines $\vec D_C$ and $D_C$. It follows from the fact that 
$(C,D,X)$ satisfies condition (b) from Thm. \ref{thm:main} that $x\mapsto x^{1,2,4}$ intertwines $D_C$ and $D+\mathrm{ad}_X$. 
It follows from 
Lem. \ref{lem:corresp}(b) that $\underline\theta^A$ intertwines $D+\mathrm{ad}_X$ and 
$\underline\theta^A (D+\mathrm{ad}_X)\underline\pi_3$, which by the equality $\vec D=\underline\theta^A D\underline\pi_3$ 
(see Lem. \ref{lem:indpce}) and $\underline\theta^A \mathrm{ad}_X\underline\pi_3=\mathrm{ad}_{\vec X}$ (by Lem. \ref{lem:corresp}(c)) 
implies that $\underline\theta^A$ intertwines $D+\mathrm{ad}_X$ and $\vec D+\mathrm{ad}_{\vec X}$. 
Lem. \ref{lem:LAs} then implies that 
the restriction of $x\mapsto x^{1,2,4,35}$ to $\underline\sigma^A(\mathfrak t_3)$ intertwines 
$\vec D_C$ and $\vec D+\mathrm{ad}_{\vec X}$. Therefore, the restriction of the linear map  
$(\vec D+\mathrm{ad}_{\vec X})\circ (x\mapsto x^{1,2,4,35})-(x\mapsto x^{1,2,4,35})\circ \vec D_C$ to 
$\underline\sigma^A(\mathfrak t_3)\subset \mathfrak P_{\vec4}$ is zero. 
Since 
$\sum_{A \in \mathbf k^{\{12,13,23\}\times[1,3]}} \underline\sigma^A(\mathfrak t_3)=\mathfrak P_{\vec4}$
(see previous paragraph), this linear map is zero, therefore $x\mapsto x^{1,2,4,35}$ intertwines 
$\vec D_C$ and $\vec D+\mathrm{ad}_{\vec X}$. Therefore $(C,\vec D,\vec X)$ satisfies (b').

This shows \eqref{incl:X:Y}. The result then follows from the conjunction of 
\eqref{incl:Y:X}, \eqref{incl:X:Y} and Thm. \ref{thm:main}. 

\end{proof}

\section{Stabilizers of outer morphisms of Lie algebras}\label{sec:3}

In this section, we attach a "stabilizer" Lie algebra to a morphism of Lie algebras (§\ref{sect:3:1}). 
After introducing the necessary group-scheme theoretic material (§\ref{sect:3:2}), we introduce 
a group scheme version of this construction, in the particular situation of morphisms of pronilpotent 
Lie algebras (§\ref{sec:3:3}).  

\subsection{Stabilizer Lie algebras of morphisms of Lie algebras} \label{sect:3:1}

If $\mathfrak a$ is a Lie algebra, then the set $\mathfrak{inn}(\mathfrak a)$ of its inner derivations is an 
ideal of its Lie algebra $\mathfrak{der}(\mathfrak a)$ of derivations, and the corresponding quotient is denoted 
$\mathfrak{out}(\mathfrak a)$. 

\begin{definition}
If $\mu : \mathfrak a\to\mathfrak b$ is a Lie algebra morphism, and if $\underline D\in\mathfrak{out}(\mathfrak a)$, 
 $\underline E\in\mathfrak{out}(\mathfrak b)$, then $\mu$ is said to intertwine $\underline D,\underline E$ iff
there exist representatives $D,E$ of $\underline D,\underline E$ which are intertwined by $\mu$, i.e. such that 
$E\mu=\mu D$. 
\end{definition}

\begin{lemma}\label{lem:stab:las}
If $\mu : \mathfrak a\to\mathfrak b$ is a Lie algebra morphism, then 
$$\{(\underline D,\underline E)\in \mathfrak{out}(\mathfrak a)
\times \mathfrak{out}(\mathfrak b)|\text{$\mu$ intertwines $\underline D$ and $\underline E$ }\}$$ is a Lie subalgebra of 
$\mathfrak{out}(\mathfrak a)\times \mathfrak{out}(\mathfrak b)$, denoted $\mathfrak{stab}_{ \mathfrak{out}(\mathfrak a)
\times \mathfrak{out}(\mathfrak b)}(\overline\mu)$. 
\end{lemma}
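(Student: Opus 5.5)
The plan is to lift everything to the level of actual derivations and then pass to the quotient. Consider the subset
$$\widetilde{\mathfrak s}:=\{(D,E)\in\mathfrak{der}(\mathfrak a)\times\mathfrak{der}(\mathfrak b)\mid E\mu=\mu D\}.$$
By the definition of intertwining for outer classes, the set in the statement is exactly the image of $\widetilde{\mathfrak s}$ under the projection
$$\mathfrak{der}(\mathfrak a)\times\mathfrak{der}(\mathfrak b)\to\mathfrak{out}(\mathfrak a)\times\mathfrak{out}(\mathfrak b).$$
This projection is a Lie algebra morphism. The image of a Lie subalgebra under a Lie algebra morphism is again a Lie subalgebra. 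So it suffices to show that $\widetilde{\mathfrak s}$ is a Lie subalgebra of $\mathfrak{der}(\mathfrak a)\times\mathfrak{der}(\mathfrak b)$.

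\textbf{Linear subspace.} The condition $E\mu=\mu D$ is linear in the pair $(D,E)$, so $\widetilde{\mathfrak s}$ is a $\mathbf k$-submodule. This also shows that the image set is a submodule, since a linear combination of representative pairs represents the corresponding linear combination of classes.

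\textbf{Closure under brackets.} Take $(D,E),(D',E')\in\widetilde{\mathfrak s}$. Using the relations $E\mu=\mu D$ and $E'\mu=\mu D'$ twice, we get
$$[E,E']\mu=EE'\mu-E'E\mu=E\mu D'-E'\mu D=\mu DD'-\mu D'D=\mu[D,D'].$$
Hence $([D,D'],[E,E'])\in\widetilde{\mathfrak s}$.

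\textbf{Remaining points to check.} The bracket on $\mathfrak{out}$ is well defined because $\mathfrak{inn}$ is an ideal. The bracket of two classes in the image is then represented by the bracket of chosen intertwined representatives, which lies in $\widetilde{\mathfrak s}$. Therefore the image is closed under the bracket and is a Lie subalgebra.

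There is no real obstacle here. The only point needing care is that intertwining of outer classes is defined existentially, via some choice of representatives. The argument therefore has to run through $\widetilde{\mathfrak s}$ and its image, rather than through a condition on arbitrary representatives.
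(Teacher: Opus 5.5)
Your proof is correct and follows essentially the same route as the paper: it also notes that $\{(D,E)\mid \mu D=E\mu\}$ is a Lie subalgebra of $\mathfrak{der}(\mathfrak a)\times\mathfrak{der}(\mathfrak b)$, and that the stabilizer is its image under the Lie algebra morphism $\mathfrak{der}(\mathfrak a)\times\mathfrak{der}(\mathfrak b)\to\mathfrak{out}(\mathfrak a)\times\mathfrak{out}(\mathfrak b)$. You also write out the bracket computation $[E,E']\mu=\mu[D,D']$, which the paper leaves implicit.
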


\begin{proof}
The subspace $\{(D,E)|\mu D=E\mu\}$ is a Lie subalgebra of $\mathfrak{der}(\mathfrak a)\times\mathfrak{der}(\mathfrak b)$. 
The statement follows from fact that
$\mathfrak{stab}_{ \mathfrak{out}(\mathfrak a)\times \mathfrak{out}(\mathfrak b)}(\overline\mu)$ is its image under the 
projection $\mathfrak{der}(\mathfrak a)\times\mathfrak{der}(\mathfrak b)\to\mathfrak{out}(\mathfrak a)
\times \mathfrak{out}(\mathfrak b)$, which is a Lie algebra morphism.
\end{proof}

For $\mathfrak g\to \mathfrak{out}(\mathfrak a)\times \mathfrak{out}(\mathfrak b)$ a Lie algebra morphism, we denote by 
$\mathfrak{stab}_{ \mathfrak g}(\overline\mu)$ the preimage of $\mathfrak{stab}_{ \mathfrak{out}(\mathfrak a)\times 
\mathfrak{out}(\mathfrak b)}(\overline\mu)$ by this morphism. Note that $\mathfrak{stab}_{ \mathfrak{der}(\mathfrak a)\times 
\mathfrak{der}(\mathfrak b)}(\overline\mu)$ is the sum of $\{(D,E)|E\mu=\mu D\}$ with 
$\mathfrak{inn}(\mathfrak{a})\times \mathfrak{inn}(\mathfrak{b})$. 

\subsection{(Outer) automorphism group schemes of positively graded Lie algebras}\label{sect:3:2}

Let $\mathfrak a$ be a positively graded complete $\mathbb Q$-Lie algebra, generated in degree $1$ and whose degree 1 component
is finite dimensional. Recall that 
for $\mathbf k$ a commutative $\mathbb Q$-algebra,  $\mathrm{Aut}_1(\mathfrak a\hat\otimes \mathbf k)$
is the group of $\mathbf k$-linear Lie algebra automorphisms of  $\mathfrak a\hat\otimes \mathbf k$, 
filtered and with associated graded the identity. 

\begin{lemma}
The assignment $\mathbf k\mapsto \mathrm{Aut}_1(\mathfrak a\hat\otimes \mathbf k)$ is a prounipotent 
$\mathbb Q$-group scheme. The associated Lie algebra is $\mathfrak{der}_1(\mathfrak a)$, 
which is the set of Lie algebra derivations of $\mathfrak a$, filtered and with associated graded equal to $0$. 
\end{lemma}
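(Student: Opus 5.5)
The plan is to identify $\mathrm{Aut}_1(\mathfrak a\hat\otimes\mathbf k)$ with a group of unipotent upper-triangular-type automorphisms of a projective limit of finite-dimensional graded pieces, and to use the exponential/logarithm correspondence. Since $\mathfrak a$ is generated in degree $1$ with $\mathfrak a[1]$ finite dimensional, each $\mathfrak a[d]$ is finite dimensional, and $\mathfrak a\hat\otimes\mathbf k=\prod_{d\geq1}\mathfrak a[d]\otimes\mathbf k$. An automorphism $\theta$ which is filtered with associated graded the identity is determined by its restriction to $\mathfrak a[1]\otimes\mathbf k$, i.e. by a $\mathbf k$-linear map $\mathfrak a[1]\otimes\mathbf k\to\prod_{d\geq1}\mathfrak a[d]\otimes\mathbf k$ whose degree-$1$ component is the identity. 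Such a map extends to a Lie algebra endomorphism exactly when it preserves the defining relations of $\mathfrak a$. Working modulo $\mathfrak a[>N]$ for each $N$, this is a polynomial condition with rational coefficients in finitely many coordinates.

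The key steps run as follows.

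\begin{enumerate}[label=(\roman*)]
\item Show that for each $N$, the functor $\mathbf k\mapsto\mathrm{Aut}_1\big((\mathfrak a/\mathfrak a[>N])\otimes\mathbf k\big)$ is a closed subgroup scheme of the unipotent group $\mathrm{GL}_1(\mathfrak a/\mathfrak a[>N])$. This is the group of linear automorphisms that are unipotent with respect to the degree filtration. Closedness comes from the polynomial nature of the Lie-morphism condition, and invertibility is automatic because unipotence makes the inverse given by a finite geometric series.
\item Show that these group schemes form a projective system whose limit is $\mathrm{Aut}_1(\mathfrak a\hat\otimes\mathbf k)$. This uses completeness and the fact that any element of $\mathrm{Aut}_1$ of $\mathfrak a\hat\otimes\mathbf k$ preserves each $\mathfrak a[>N]\hat\otimes\mathbf k$. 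A projective limit of unipotent affine group schemes is prounipotent.
\item Identify the Lie algebra. For a unipotent group $U_N\subset\mathrm{GL}(V_N)$ over $\mathbb Q$, $\exp$ and $\log$ are mutually inverse polynomial bijections between $U_N(\mathbf k)$ and its Lie algebra. One checks the standard fact that $\theta$ is a Lie algebra automorphism if and only if $\log\theta$ is a derivation. The forward direction uses the Leibniz formula for $\exp$ of a derivation. The reverse direction holds because $\theta_t:=\exp(t\log\theta)$ is polynomial in $t$, and the identity $\theta_t[x,y]=[\theta_tx,\theta_ty]$ holds for all integers $t$, hence identically; differentiating at $t=0$ then gives the derivation property.
\item The unipotence condition on $\theta$ translates into $\log\theta$ being strictly filtration-increasing, i.e. having associated graded $0$. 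Passing to the limit over $N$ identifies the Lie algebra with $\mathfrak{der}_1(\mathfrak a)$ (completed, i.e. derivations of $\mathfrak a\hat\otimes\mathbf k$ raising degree).
\end{enumerate}

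I expect the main obstacle to be the bookkeeping in step (i): ensuring that the Lie-morphism condition on $\mathfrak a/\mathfrak a[>N]$ is given by finitely many polynomial equations. I would handle this using a finite presentation of the nilpotent quotient $\mathfrak a/\mathfrak a[>N]$, which exists because it is a finite-dimensional Lie algebra. The other point needing care is compatibility of truncation with the "generated in degree $1$" hypothesis, so that automorphisms of the quotient lift consistently. Everything else is the standard $\exp/\log$ correspondence for unipotent groups over $\mathbb Q$.
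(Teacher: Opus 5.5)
Your proof is correct and takes essentially the paper's route: place $\mathrm{Aut}_1(\mathfrak a\hat\otimes\mathbf k)$ inside the prounipotent group of filtered linear automorphisms with identity associated graded, and use that $\exp$ restricts to a bijection from filtration-raising derivations onto Lie automorphisms. Your degree-$N$ truncations and the Leibniz/polynomial-in-$t$ argument supply details the paper asserts without proof; two small points: in (iii) the labels ``forward'' and ``reverse'' are swapped relative to the equivalence as you state it, and the finite-presentation and lifting concerns are unnecessary, since closedness already follows from the finitely many quadratic equations $\theta[e_i,e_j]=[\theta e_i,\theta e_j]$ on a basis of $\mathfrak a/\mathfrak a[>N]$, and a projective limit of group schemes needs no surjective transition maps.
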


\begin{proof}
Let $\mathrm{GL}^+(\mathfrak a\hat\otimes \mathbf k)$ be the group of $\mathbf k$-linear 
automorphisms of  $\mathfrak a\hat\otimes \mathbf k$, filtered and with associated graded the identity. 
Let $\mathfrak{end}^+(\mathfrak a)$ be the set of endomorphisms of $\mathfrak a$, filtered and with associated graded equal to $0$. 
The exponential map sets up a bijection $\mathfrak a\hat\otimes \mathbf k\to \mathrm{GL}^+(\mathfrak a\hat\otimes \mathbf k)$, which 
implies that $\mathbf k\mapsto \mathrm{GL}^+(\mathfrak a\hat\otimes \mathbf k)$ is a prounipotent 
$\mathbb Q$-group scheme with associated Lie algebra $\mathfrak{end}^+(\mathfrak a)$. The exponential map
restricts to a bijection $\mathfrak{der}_1(\mathfrak a)\hat\otimes \mathbf k\to \mathrm{Aut}_1(\mathfrak a\hat\otimes \mathbf k)$, 
which implies the claim. 
\end{proof}

Recall that for $\mathbf k$ a commutative $\mathbb Q$-algebra, the assignment  $g\mapsto \mathrm{Ad}_g$ is a group morphism  
$\mathrm{Ad} : \mathrm{exp}(\mathfrak a\hat\otimes \mathbf k)\to\mathrm{Aut}_1(\mathfrak a\hat\otimes \mathbf k)$
 whose image is a normal subgroup, and that the corresponding quotient is denoted 
 $\mathrm{Out}_1(\mathfrak a\hat\otimes \mathbf k):=\mathrm{Aut}_1(\mathfrak a\hat\otimes \mathbf k)/
 \mathrm{Ad}(\mathrm{exp}(\mathfrak a\hat\otimes \mathbf k))$. 

\begin{lemma}
The assignment
$\mathbf k\mapsto \mathrm{Out}_1(\mathfrak a\hat\otimes \mathbf k)$ is a prounipotent 
$\mathbb Q$-group scheme. The associated Lie algebra is $\mathfrak{out}_1(\mathfrak a)$, which is the 
quotient of $\mathfrak{der}_1(\mathfrak a)$ by the image of the Lie algebra morphisms 
$\mathrm{ad} : \mathfrak a\to \mathfrak{der}_1(\mathfrak a)$, $a\mapsto \mathrm{ad}_a:=(x\mapsto [a,x])$. 
\end{lemma}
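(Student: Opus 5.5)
The plan is to deduce the statement from the preceding lemma, which says that $\mathbf k\mapsto\mathrm{Aut}_1(\mathfrak a\hat\otimes\mathbf k)$ is a prounipotent group scheme with Lie algebra $\mathfrak{der}_1(\mathfrak a)$ and exponential bijection $\mathfrak{der}_1(\mathfrak a)\hat\otimes\mathbf k\to\mathrm{Aut}_1(\mathfrak a\hat\otimes\mathbf k)$. The first step identifies the subgroup $\mathrm{Ad}(\mathrm{exp}(\mathfrak a\hat\otimes\mathbf k))$ as the exponential of the Lie subalgebra $\mathrm{ad}(\mathfrak a)\hat\otimes\mathbf k\subset\mathfrak{der}_1(\mathfrak a)\hat\otimes\mathbf k$. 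For $a\in\mathfrak a\hat\otimes\mathbf k$ one has $\mathrm{Ad}_{\mathrm{exp}(a)}=\mathrm{exp}(\mathrm{ad}_a)$. Since $\mathfrak a$ is positively graded, $\mathrm{ad}_a$ raises the filtration, so it lies in $\mathfrak{der}_1$.

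Next, $\mathrm{ad}(\mathfrak a)$ is an ideal of $\mathfrak{der}_1(\mathfrak a)$, because $[D,\mathrm{ad}_a]=\mathrm{ad}_{D(a)}$. It is also closed in the complete topology, since it is the image of the graded map $\mathrm{ad}$ completed degree by degree. Hence $\mathbf k\mapsto\mathrm{Ad}(\mathrm{exp}(\mathfrak a\hat\otimes\mathbf k))=\mathrm{exp}(\mathrm{ad}(\mathfrak a)\hat\otimes\mathbf k)$ is a normal prounipotent closed subgroup scheme.

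One point needs care: $\mathrm{ad}(\mathfrak a)\hat\otimes\mathbf k$ must equal the image of $\mathfrak a\hat\otimes\mathbf k$ under $\mathrm{ad}$. This holds degreewise because each graded piece is a finite-dimensional $\mathbb Q$-vector space: $\mathfrak a$ is generated in degree 1 with $\mathfrak a[1]$ finite dimensional. The exact sequence $0\to Z(\mathfrak a)[d]\to\mathfrak a[d]\to\mathrm{ad}(\mathfrak a)[d]\to0$ then stays exact after $\otimes\mathbf k$, by flatness over a field, and one passes to the completed product.

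The main step is to show that the quotient $\mathrm{Aut}_1/\mathrm{Ad}(\mathrm{exp})$ is a prounipotent group scheme with Lie algebra $\mathfrak{der}_1(\mathfrak a)/\mathrm{ad}(\mathfrak a)$. Here I would use the equivalence between prounipotent $\mathbb Q$-group schemes and pronilpotent Lie algebras via Baker--Campbell--Hausdorff. Transporting the group law through $\mathrm{exp}$, $\mathrm{Aut}_1(\mathfrak a\hat\otimes\mathbf k)$ becomes $\mathfrak{der}_1(\mathfrak a)\hat\otimes\mathbf k$ with the BCH product. By BCH, cosets of the normal subgroup $\mathrm{exp}(\mathrm{ad}(\mathfrak a)\hat\otimes\mathbf k)$ are exactly the classes modulo the ideal $\mathrm{ad}(\mathfrak a)\hat\otimes\mathbf k$.

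The quotient group is therefore $\mathfrak{out}_1(\mathfrak a)\hat\otimes\mathbf k$ with the induced BCH product, functorially in $\mathbf k$. To see this one checks that $(\mathfrak{der}_1/\mathrm{ad})\hat\otimes\mathbf k=\mathfrak{der}_1\hat\otimes\mathbf k/\mathrm{ad}\hat\otimes\mathbf k$, again degreewise by flatness. This exhibits $\mathrm{Out}_1$ as a representable prounipotent group scheme, namely the inverse limit of the unipotent groups attached to the finite-dimensional nilpotent quotients $\mathfrak{out}_1(\mathfrak a)/F^{>N}$, with Lie algebra $\mathfrak{out}_1(\mathfrak a)$.

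I expect the main obstacle to be verifying these finite-dimensionality and exactness statements degree by degree. Without them the naive quotient functor need not be a sheaf, so this is exactly what makes $\mathrm{Out}_1$ representable.
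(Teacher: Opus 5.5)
Your proposal is correct and follows essentially the same route as the paper. In both, the inner automorphisms form a normal prounipotent subgroup of $\mathrm{Aut}_1(\mathfrak a\hat\otimes\mathbf k)$, and one passes to the quotient. You establish normality on the Lie algebra side via $[D,\mathrm{ad}_a]=\mathrm{ad}_{D(a)}$, while the paper does it on the group side via $\theta\,\mathrm{Ad}_g\theta^{-1}=\mathrm{Ad}_{\theta(g)}$. The paper then simply invokes the general fact that the quotient of a prounipotent group scheme by a normal prounipotent subgroup scheme is prounipotent with Lie algebra $\mathrm{Lie}(G)/\mathrm{Lie}(H)$; you instead prove this fact in the present setting, using Baker--Campbell--Hausdorff and degreewise flatness.
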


\begin{proof}
One checks that for $g\in  \mathrm{exp}(\mathfrak a\hat\otimes \mathbf k)$, 
$\mathrm{Ad}_g\in \mathrm{Aut}_1(\mathfrak a\hat\otimes \mathbf k)$; moreover,  
$\theta\in \mathrm{Aut}_1(\mathfrak a\hat\otimes \mathbf k)$ induces an 
automorphism of $\mathrm{exp}(\mathfrak a\hat\otimes \mathbf k)$ and $\theta\mathrm{Ad}_g\theta^{-1}=\mathrm{Ad}_{\theta(g)}$. 
This implies the first statement. The second statement follows from the fact that for $\mathbf k\mapsto G(\mathbf k)$
a prounipotent $\mathbb Q$-group scheme and for $\mathbf k\mapsto H(\mathbf k)$ a normal prounipotent $\mathbb Q$-subgroup scheme, 
$\mathbf k\mapsto G(\mathbf k)/H(\mathbf k)$ is a prounipotent $\mathbb Q$-group scheme with Lie algebra 
$\mathrm{Lie}G/\mathrm{Lie}(H)$. 
\end{proof}

As $\mathfrak{der}_1(\mathfrak a)\subset \mathfrak{der}(\mathfrak a)$, $\mathfrak{out}_1(\mathfrak a)$ is a Lie algebra of 
$\mathfrak{out}(\mathfrak a)$. 

\subsection{Stabilizer group schemes of outer morphisms of Lie algebras} \label{sec:3:3}

Let $\mathfrak a$ and $\mathfrak b$ be positively graded complete $\mathbb Q$-Lie algebras, generated in degree $1$ and 
whose degree 1 components are finite dimensional. 

Let $\mathbf k$ a commutative $\mathbb Q$-algebra. 
Recall from \S\ref{sect:intro} (paragraph before Thm. \ref{thm:4}) the definition of the sets 
$\mathrm{Hom}(\mathfrak a,\mathfrak b)(\mathbf k)$ and $\mathrm{OutHom}(\mathfrak a,\mathfrak b)(\mathbf k)$,  
of the action of the group $\mathrm{Out}_1(\mathfrak a\hat\otimes \mathbf k)\times \mathrm{Out}_1(\mathfrak b\hat\otimes \mathbf k)$
on this set, and of the map  $\mathrm{Hom}(\mathfrak a,\mathfrak b)(\mathbb Q)\ni \mu\mapsto \overline\mu\in 
\mathrm{OutHom}(\mathfrak a,\mathfrak b)(\mathbb Q)\to \mathrm{OutHom}(\mathfrak a,\mathfrak b)(\mathbf k)$. 

% define $\mathrm{Hom}(\mathfrak a,\mathfrak b)(\mathbf k)$ as the set 
% of $\mathbf k$-linear morphisms of complete Lie algebras $\mathfrak a\hat\otimes\mathbf k\to \mathfrak b\hat\otimes\mathbf k$. 
% The group $\mathrm{exp}(\mathfrak b\hat\otimes\mathbf k)$ acts by post-composition on $\mathrm{Hom}(\mathfrak a,\mathfrak b)(\mathbf k)$, 
% denote by $\mathrm{OutHom}(\mathfrak a,\mathfrak b)(\mathbf k)$ the quotient set 
% $\mathrm{Hom}(\mathfrak a,\mathfrak b)(\mathbf k)/\mathrm{exp}(\mathfrak b\hat\otimes\mathbf k)$.

% The group $\mathrm{Aut}_1(\mathfrak a\hat\otimes\mathbf k)\times\mathrm{Aut}_1(\mathfrak b\hat\otimes\mathbf k)$
% acts on $\mathrm{Hom}(\mathfrak a,\mathfrak b)(\mathbf k)$ by combining pre- and post-composition. This 
% induces an action of $\mathrm{Out}_1(\mathfrak a\hat\otimes \mathbf k)\times \mathrm{Out}_1(\mathfrak b\hat\otimes \mathbf k)$
% on $\mathrm{OutHom}(\mathfrak a,\mathfrak b)(\mathbf k)$. 

% For $\mu\in \mathrm{Hom}(\mathfrak a,\mathfrak b)(\mathbb Q)$, let $\overline\mu$ be its image in 
% $\mathrm{OutHom}(\mathfrak a,\mathfrak b)(\mathbb Q)$, and also by $\overline\mu$ the image of the latter element in 
% $\mathrm{OutHom}(\mathfrak a,\mathfrak b)(\mathbf k)$ for any $\mathbf k$. 

\begin{lemma}\label{lem:31}
The assignment $\mathbf k\mapsto \mathrm{Stab}_{\mathrm{Out}_1(\mathfrak a\hat\otimes \mathbf k)\times 
\mathrm{Out}_1(\mathfrak b\hat\otimes \mathbf k)}(\overline\mu)$ is a prounipotent group functor. The associated 
Lie algebra is $\mathfrak{stab}_{ \mathfrak{out}_1(\mathfrak a)\times \mathfrak{out}_1(\mathfrak b)}(\overline\mu)$. 
\end{lemma}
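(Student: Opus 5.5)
We lift the stabilizer condition to the automorphism level, identify it there with a unipotent group defined by an algebraic equation, and then pass to outer automorphisms by quotienting. Throughout, $\mu\in\mathrm{Hom}(\mathfrak a,\mathfrak b)(\mathbb Q)$ is a graded (degree $0$) morphism, as in all our applications; presumably this is the intended hypothesis.

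\emph{Step 1 (lifted stabilizer).} By definition, a pair $(\underline\theta,\underline\eta)\in \mathrm{Out}_1(\mathfrak a\hat\otimes\mathbf k)\times\mathrm{Out}_1(\mathfrak b\hat\otimes\mathbf k)$ fixes $\overline\mu$ if and only if, for some (equivalently any) lifts $\theta,\eta$, there is $g\in\exp(\mathfrak b\hat\otimes\mathbf k)$ with $\eta\circ\mu\circ\theta^{-1}=\mathrm{Ad}_g\circ\mu$. Replacing $\eta$ by $\mathrm{Ad}_g^{-1}\eta$, this is equivalent to the existence of lifts with $\eta\mu=\mu\theta$. Consider the subgroup
\[
S(\mathbf k):=\{(\theta,\eta)\in\mathrm{Aut}_1(\mathfrak a\hat\otimes\mathbf k)\times\mathrm{Aut}_1(\mathfrak b\hat\otimes\mathbf k)\mid \eta\mu=\mu\theta\}\cdot\bigl(\mathrm{Ad}(\exp(\mathfrak a\hat\otimes\mathbf k))\times\mathrm{Ad}(\exp(\mathfrak b\hat\otimes\mathbf k))\bigr).
\]
The stabilizer is the image of $S(\mathbf k)$ in the outer group. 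The first factor, call it $S_0(\mathbf k)$, is a closed subgroup scheme of the prounipotent group $\mathrm{Aut}_1\times\mathrm{Aut}_1$: the condition $\eta\mu=\mu\theta$ is polynomial in each finite-degree truncation, because $\mu$ is graded and the truncations $\mathfrak a/F^{n}\mathfrak a$ are finite dimensional, as $\mathfrak a$ is generated in degree $1$ with finite-dimensional degree-$1$ part. Closed subgroups of prounipotent groups are prounipotent.

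\emph{Step 2 (Lie algebra of $S_0$).} We show that $\exp$ restricts to a bijection from $\{(D,E)\in\mathfrak{der}_1(\mathfrak a)\times\mathfrak{der}_1(\mathfrak b)\mid E\mu=\mu D\}\hat\otimes\mathbf k$ onto $S_0(\mathbf k)$.

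\begin{itemize}
\item If $E\mu=\mu D$, then $E^n\mu=\mu D^n$ for all $n$, so $\exp(E)\mu=\mu\exp(D)$.
\item Conversely, if $\eta\mu=\mu\theta$ with $\theta=\exp D$ and $\eta=\exp E$, apply the logarithm series. It converges because $\theta-1$ and $\eta-1$ raise the filtration degree, and $(\eta-1)^n\mu=\mu(\theta-1)^n$. Hence $E\mu=\mu D$.
\end{itemize}

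This gives $\mathrm{Lie}(S_0)=\{(D,E)\mid E\mu=\mu D\}$.

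\emph{Step 3 (passing to outer automorphisms).} The group $N:=\mathrm{Ad}(\exp\mathfrak a)\times\mathrm{Ad}(\exp\mathfrak b)$ is a normal prounipotent subgroup of $\mathrm{Aut}_1\times\mathrm{Aut}_1$. Hence $S=S_0N$ is a prounipotent subgroup with Lie algebra $\mathrm{Lie}(S_0)+\mathrm{ad}(\mathfrak a)\times\mathrm{ad}(\mathfrak b)$. To verify this, use that in the prounipotent setting $\exp$ is a bijection, and that the Lie algebra of a product of subgroups, one of them normal, is the sum of the Lie algebras, for instance via the Baker–Campbell–Hausdorff formula. Its image in $\mathrm{Out}_1\times\mathrm{Out}_1$ is $S/N$, which is prounipotent with Lie algebra
\[
\bigl(\mathrm{Lie}(S_0)+\mathfrak{inn}\times\mathfrak{inn}\bigr)/(\mathfrak{inn}\times\mathfrak{inn}).
\]
By the remark after Lem. \ref{lem:stab:las} (restricted to $\mathfrak{der}_1$), this quotient is exactly $\mathfrak{stab}_{\mathfrak{out}_1(\mathfrak a)\times\mathfrak{out}_1(\mathfrak b)}(\overline\mu)$.

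The main obstacle is functoriality in $\mathbf k$, namely that the image of $S(\mathbf k)$ is $(S/N)(\mathbf k)$ rather than only its fppf sheafification. This holds because for prounipotent groups over $\mathbb Q$-algebras the quotient $G(\mathbf k)/N(\mathbf k)$ is computed pointwise, via $\exp$ and BCH. The surjectivity $G(\mathbf k)\to (G/N)(\mathbf k)$ follows from the splitting of the Lie algebra as a $\mathbf k$-module, degree by degree. I would invoke this in the same way as in the proof of the previous lemma on $\mathrm{Out}_1$.
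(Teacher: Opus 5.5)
Your proposal is correct and follows essentially the paper's route: identify the stabilizer as the image of the intertwining pairs $\{(\alpha,\beta)\mid\beta\mu=\mu\alpha\}\subset\mathrm{Aut}_1\times\mathrm{Aut}_1$, use the $\exp/\log$ series to show these are exactly the exponentials of the intertwining pairs $(D,E)\in\mathfrak{der}_1(\mathfrak a)\times\mathfrak{der}_1(\mathfrak b)$ with $E\mu=\mu D$, and push this bijection down to $\mathrm{Out}_1\times\mathrm{Out}_1$. The differences are cosmetic. The paper projects the $\exp$-bijection directly, without forming $S_0N$ and invoking BCH. Your extra hypothesis that $\mu$ is graded is not needed: a continuous morphism between such Lie algebras automatically preserves the degree filtrations, because $\mathfrak a$ is generated in degree $1$, and that is all your series arguments use.
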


\begin{proof}
The group $\mathrm{Stab}_{\mathrm{Out}_1(\mathfrak a\hat\otimes \mathbf k)\times 
\mathrm{Out}_1(\mathfrak b\hat\otimes \mathbf k)}(\overline\mu)$ is the image in 
$\mathrm{Out}_1(\mathfrak a\hat\otimes \mathbf k)\times 
\mathrm{Out}_1(\mathfrak b\hat\otimes \mathbf k)$ of 
$\{(\alpha,\beta)\in \mathrm{Aut}_1(\mathfrak a\hat\otimes \mathbf k)\times 
\mathrm{Aut}_1(\mathfrak b\hat\otimes \mathbf k)|\beta\mu=\mu\alpha\}$. 
Recall that $\mathrm{exp}$ sets up a bijection
$\mathfrak{der}_1(\mathfrak a)\hat\otimes\mathbf k\to\mathrm{Aut}_1(\mathfrak a)(\mathbf k)$
and 
$\mathfrak{der}_1(\mathfrak b)\hat\otimes\mathbf k\to\mathrm{Aut}_1(\mathfrak b)(\mathbf k)$, whose inverses
will be denoted $\mathrm{log}$. 
The equation $\beta\mu=\mu\alpha$ for $(\alpha,\beta)\in \mathrm{Aut}_1(\mathfrak a\hat\otimes \mathbf k)\times 
\mathrm{Aut}_1(\mathfrak b\hat\otimes \mathbf k)$ is equivalent to 
$\mathrm{log}\beta\circ \mu=\mu\circ \mathrm{log}\alpha$. Therefore the exponential 
sets up a bijection $\{(\alpha,\beta)\in \mathfrak{der}_1(\mathfrak a)\times 
\mathfrak{der}_1(\mathfrak b)|\beta\mu=\mu\alpha\}\hat\otimes\mathbf k\to 
\{(\alpha,\beta)\in \mathrm{Aut}_1(\mathfrak a\hat\otimes \mathbf k)\times 
\mathrm{Aut}_1(\mathfrak b\hat\otimes \mathbf k)|\beta\mu=\mu\alpha\}$. 
It follows that it also sets up a bijection 
$\mathfrak{stab}_{ \mathfrak{out}_1(\mathfrak a)\times \mathfrak{out}_1(\mathfrak b)}(\mu)\hat\otimes\mathbf k\to 
\mathrm{Stab}_{\mathrm{Out}_1(\mathfrak a\hat\otimes \mathbf k)\times 
\mathrm{Out}_1(\mathfrak b\hat\otimes \mathbf k)}(\overline\mu)$, which implies the result. 
\end{proof}

\section{A stabilizer interpretation of $\mathsf{GRT}_1(\mathbf k)$}
\label{sec:4}

The purpose of this section is, based on the material on §\ref{sec:3} on stabilizers, to prove 
Thm. \ref{thm:la:char} on Lie algebra stabilizers of pairs of morphisms and Cors. \ref{thm:gp:char} and 
\ref{thm:gp:char:sphere} on group stabilizers of the same pairs.

Denote by $D\mapsto[D]$ the projection map $\mathfrak{der}(\mathfrak a)\to\mathfrak{out}(\mathfrak a)$. 
\begin{lemma}\label{lem:24}
The map $\mathfrak G_\inert\to \mathfrak{out}(\mathfrak t_3)$ given by $C\mapsto [D_C]$ is injective and defines a 
Lie algebra morphism  $\mathfrak G_\inert\to \mathfrak{out}_1(\mathfrak t_3)$. 
\end{lemma}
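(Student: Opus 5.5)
The plan has three parts: show that $C\mapsto[D_C]$ lands in $\mathfrak{out}_1(\mathfrak t_3)$, show that it is a Lie algebra morphism, and show that it is injective.

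\textbf{Landing in $\mathfrak{out}_1(\mathfrak t_3)$.} By Lem. \ref{lem:def:DC}, $C\mapsto D_C$ is a Lie algebra morphism $\mathfrak G_\inert\to\mathfrak{der}(\mathfrak t_3)$. Composing with the projection $\mathfrak{der}(\mathfrak t_3)\to\mathfrak{out}(\mathfrak t_3)$, which is a Lie algebra morphism, shows that $C\mapsto[D_C]$ is a Lie algebra morphism. Since $\mathfrak G$ is supported in degrees $>1$, we can decompose $C$ into homogeneous components $C=\sum_{d\ge2}C_d$. Uniqueness of $h_C$ makes $h_C$ homogeneous of the same degree when $C$ is, so $D_C$ raises degree by $d-1\geq1$. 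Hence $D_C$ is filtered with zero associated graded, i.e. $D_C\in\mathfrak{der}_1(\mathfrak t_3)$. This gives a morphism $\mathfrak G_\inert\to\mathfrak{out}_1(\mathfrak t_3)$, which is compatible with the map to $\mathfrak{out}(\mathfrak t_3)$ since $\mathfrak{out}_1\subset\mathfrak{out}$.

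\textbf{Injectivity.} Suppose $D_C=\mathrm{ad}_a$ for some $a\in\mathfrak t_3$. Both sides can be split into homogeneous degrees, so we may assume $C$ homogeneous of degree $d\ge2$ and $a$ homogeneous of degree $d-1$. Evaluating on $t_{12}$ gives $[a,t_{12}]=D_C(t_{12})=0$. By Lem. \ref{lem:Ct12} with $n=3$, we get $a\in\mathbf k t_{12}+\mathfrak t_2^{12,3}=\mathbf kt_{12}+\mathbf k(t_{13}+t_{23})$. Using the centrality of $z_3=t_{12}+t_{13}+t_{23}$, $\mathrm{ad}_a$ is then a scalar multiple of $\mathrm{ad}_{t_{12}}$. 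In particular $a$ has degree $1$, so $d=2$, and $\mathrm{ad}_a=\lambda\,\mathrm{ad}_{t_{12}}$.

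Evaluating on $t_{23}$ gives $[C(t_{23},t_{12}),t_{23}]=\lambda[t_{12},t_{23}]$. The degree-2 part of $\mathfrak G$ is spanned by $[e_0,e_1]$, so write $C=c[e_0,e_1]$. The identity becomes $c[[t_{23},t_{12}],t_{23}]=\lambda[t_{12},t_{23}]$, an equality between elements of degrees $3$ and $2$. Therefore $\lambda=0$ and $c[[t_{23},t_{12}],t_{23}]=0$.

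To conclude $c=0$, I use that $\mathfrak{lie}(e_0,e_1)\to\mathfrak t_3$ is injective (as stated in \S\ref{sect:12}). Then $[[e_0,e_1],e_0]\neq0$ in the free Lie algebra implies $[[t_{23},t_{12}],t_{23}]\neq0$, hence $c=0$ and $C=0$.

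\textbf{Degree bookkeeping.} I expect the only real subtlety to be the degree bookkeeping, more precisely justifying the reduction to homogeneous components. If $D_C=\mathrm{ad}_a$ with $a$ arbitrary, comparing homogeneous components gives $D_{C_d}=\mathrm{ad}_{a_{d-1}}$ for each $d$, because $C\mapsto D_C$ respects the grading. So the homogeneous argument suffices. The argument above also shows that $a$ can only have a degree-1 component, which one handles as done.
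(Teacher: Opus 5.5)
There is a genuine gap in the injectivity part. The sentence ``In particular $a$ has degree $1$, so $d=2$'' does not follow. Lem.~\ref{lem:Ct12} only places $a$ in the degree-$1$ subspace $\mathbf k t_{12}+\mathbf k(t_{13}+t_{23})$, and that is equally consistent with $a=0$ for any $d$. In that case $D_C=\mathrm{ad}_a=0$, and you still have to show that $D_C=0$ forces $C=0$. You do this only for $d=2$, using that the degree-$2$ part of $\mathfrak G$ is $\mathbf k[e_0,e_1]$. Homogeneous $C$ of degree $d\geq 3$ with $D_C=0$ are never treated, and that is the core of the statement, namely injectivity of $C\mapsto D_C$ itself.

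Your degree count is also off by one. Since $D_C(t_{23})=[C(t_{23},t_{12}),t_{23}]$ has degree $d+1$, $D_C$ is homogeneous of degree $d$, not $d-1$. This does not affect landing in $\mathfrak{out}_1(\mathfrak t_3)$. But it means $a$ has degree $d\geq 2$, so Lem.~\ref{lem:Ct12} gives $a=0$ directly in every degree, and the $\lambda\,\mathrm{ad}_{t_{12}}$ analysis is unnecessary.

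The missing step is easy to supply, and it is what the paper does. From $D_C=0$ you get $[C(t_{23},t_{12}),t_{23}]=0$. Lem.~\ref{lem:Ct12}, applied to the pair $(2,3)$, then gives $C(t_{23},t_{12})\in\mathbf k t_{23}+\mathfrak t_2^{1,23}$, which is concentrated in degree $1$. Since $C$ is supported in degrees $>1$, this forces $C(t_{23},t_{12})=0$, and injectivity of $\mathfrak{lie}(e_0,e_1)\to\mathfrak t_3$ gives $C=0$. Equivalently, you could use that the centralizer of $e_0$ in the free Lie algebra $\mathfrak{lie}(e_0,e_1)$ is $\mathbf k e_0$. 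With this added, your reduction to homogeneous components and your argument for the Lie algebra morphism property go through as in the paper.
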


\begin{proof}
The kernel of this map is the set of $C\in\mathfrak G_\inert$ such that $D_C$ is inner. If $C$ is such 
an element and if $X\in \mathfrak t_3$
is such that $D_C=\mathrm{ad}_X$, then one may assume the support of $X$ to be contained in that of $C$, therefore in 
$\{k|k\geq2\}$. Moreover, $[X,t_{12}]=0$, which by Lem. \ref{lem:Ct12} implies $X\in\mathbf kt_{12}
+\mathfrak t_2^{12,3}$, therefore $X$ is supported in degree 1. It follows that $X=0$, therefore $D_C=0$. 
This implies $[C(t_{32},t_{21}),t_{23}]=0$, therefore by Lem. \ref{lem:Ct12}, 
$C(t_{32},t_{21})\in \mathbf kt_{23}+\mathfrak t_2^{1,23}$ therefore $C$ is supported in degree $1$, which implies $C=0$. 
The map $C\mapsto [D_C]$ corestricts to $\mathfrak{out}_1(\mathfrak t_3)$ for degree reasons. It is a Lie algebra 
morphism by Lem. \ref{lem:def:DC}. 
\end{proof}

% \begin{theorem}\label{thm:la:char}
% $\mathfrak{grt}_1$ is equal to the intersection
% \begin{equation}\label{main:int}
% \mathfrak G_\inert\cap
% \mathfrak{stab}_{\mathfrak{out}(\mathfrak t_3)}(\mu_{123})\cap
% \mathfrak{stab}_{\mathfrak{out}(\mathfrak t_3)}(\mu_{124}) 
% \end{equation}
% (intersection in the Lie algebra $\mathfrak{out}(\mathfrak t_3)$), where 
% $\mathfrak G_\inert%\times\mathfrak{out}(\mathfrak t_4)
% $ is the Lie subalgebra defined by Lem. \ref{lem:24} and 
% $\mathfrak{stab}_{\mathfrak{out}(\mathfrak t_3)}(\mu_{123})$
% and $\mathfrak{stab}_{\mathfrak{out}(\mathfrak t_3)}(\mu_{124})$
% are the Lie subalgebras associated to the morphisms $\mu_{123},\mu_{124}:\mathfrak t_3\to\mathfrak t_4$
% %, $\mu_{123} : x\mapsto x^{1,2,3}$ and $\mu_{124} : x\mapsto x^{1,2,4}$ 
% (see Lem. \ref{lem:stab:las} and Thm. \ref{thm:main}). 
% \end{theorem}

\begin{theorem}\label{thm:la:char}
$\mathfrak{grt}_1$ is equal to the intersection
\begin{equation}\label{main:int}
\mathfrak{stab}_{\mathfrak G_\inert\times \mathfrak{out}(\mathfrak t_4)}(\overline\mu_{123})\cap
\mathfrak{stab}_{\mathfrak G_\inert\times \mathfrak{out}(\mathfrak t_4)}(\overline\mu_{124}) 
\end{equation}
(intersection in the Lie algebra $\mathfrak G_\inert\times \mathfrak{out}(\mathfrak t_4)$), where the notation is as in 
Lem. \ref{lem:stab:las}, the inclusion 
$\mathfrak G_\inert\subset\mathfrak{out}(\mathfrak t_3)$ is as in Lem. \ref{lem:24} and 
%$\mathfrak{stab}_{\mathfrak{out}(\mathfrak t_3)\times \mathfrak{out}(\mathfrak t_4)}(\mu_{123})$ and 
%$\mathfrak{stab}_{\mathfrak{out}(\mathfrak t_3)\times \mathfrak{out}(\mathfrak t_4)}(\mu_{124})$ are 
the morphisms $\mu_{123},\mu_{124}:\mathfrak t_3\to\mathfrak t_4$ are as in Thm. \ref{thm:main}. 
\end{theorem}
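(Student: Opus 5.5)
We need to interpret the intersection \eqref{main:int}, identify it with a subset of $\mathfrak G_\inert\times\mathfrak{out}(\mathfrak t_4)$, and show that its image under projection to $\mathfrak G_\inert$ is $\mathfrak{grt}_1$. The plan is to read $\mathfrak{grt}_1$ as embedded via $C\mapsto (C,[D])$, and to prove that for each $C$ there is at most one outer class $[D]$ compatible with both morphisms.

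\textbf{Unfolding the definitions.} By Lem. \ref{lem:stab:las} and the definition of intertwining for outer derivations, a pair $(C,\underline E)\in\mathfrak G_\inert\times\mathfrak{out}(\mathfrak t_4)$ lies in $\mathfrak{stab}(\overline\mu_{123})$ iff there exist $Y\in\mathfrak t_3$ and a representative $E_1$ of $\underline E$ with $E_1\mu_{123}=\mu_{123}(D_C+\mathrm{ad}_Y)$. Replacing $E_1$ by $E_1-\mathrm{ad}_{\mu_{123}(Y)}$, which has the same class, this becomes $E_1\mu_{123}=\mu_{123}D_C$. Likewise, membership in $\mathfrak{stab}(\overline\mu_{124})$ gives a representative $E_2$ with $E_2\mu_{124}=\mu_{124}D_C$. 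Since $E_1,E_2$ represent the same class, $E_2=E_1+\mathrm{ad}_X$ for some $X\in\mathfrak t_4$.

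\textbf{Comparison with Thm. \ref{thm:main}.} The previous paragraph shows that $(C,\underline E)$ lies in the intersection iff there are $D:=E_1$ and $X$ satisfying conditions (a) and (b) of Thm. \ref{thm:main} with $[D]=\underline E$. Hence the projection of the intersection to $\mathfrak G_\inert$ is exactly the set $\mathfrak X=\mathfrak{grt}_1$. Conversely, each $C\in\mathfrak{grt}_1$ gives a pair $(C,[D])$ lying in the intersection, with $D$ the derivation of Lem. \ref{lem:act:grt}(b).

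\textbf{Uniqueness of the outer class (main obstacle).} To identify the intersection with $\mathfrak{grt}_1$ rather than merely project onto it, I must show that the class $[D]$ is determined by $C$. Equivalently, let $D$ be a derivation of $\mathfrak t_4$ (which I may assume supported in degree $>1$) and $X\in\mathfrak t_4$ such that $D\mu_{123}=0$ and $(D+\mathrm{ad}_X)\mu_{124}=0$; I must show $D$ is inner.

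The approach is as follows.
\begin{enumerate}
\item From $D\mu_{123}=0$ we get $D(t_{12})=D(t_{13})=D(t_{23})=0$.
\item From the second condition, $[X,t_{12}]=0$, so by Lem. \ref{lem:Ct12}, $X=\lambda^{12,3,4}$ (up to degree-one terms, which vanish by degree). Moreover $D(t_{i4})=-[X,t_{i4}]$ for $i=1,2$.
\item By the relation that $z_4$ is central, $D(t_{34})$ is then forced by the sum $\sum_{i<j}D(t_{ij})=0$.
\item The relations $[t_{14},t_{23}]=0$ and $[t_{13},t_{24}]=0$ give $[t_{23},[X,t_{14}]]=0$ and $[t_{13},[X,t_{24}]]=0$. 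I would then apply Lem. \ref{lem:kertt} and its permuted versions, exactly as in the proof of Thm. \ref{thm:main}. Running the argument with $C=0$ gives $\gamma^{23,1,4}+\delta^{14,2,3}-\lambda^{12,3,4}=0$ with $\delta=\lambda=\gamma=0$, by the same specializations \eqref{expr:delta:C}--\eqref{expr:gamma:C}. Hence $X=0$ in degrees $>1$.
\item It follows that $D$ kills all generators except possibly $t_{34}$. By step 3, $D(t_{34})=0$ as well, so $D=0$.
\end{enumerate}

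If this argument leaves an inner ambiguity, that only changes $D$ by an inner derivation, which is harmless. The uniqueness step is therefore the delicate one: the key point is to reuse the $C=0$ case of the computation in Thm. \ref{thm:main}. Finally, the Lie algebra structure on both sides is compatible, since Lem. \ref{lem:24} shows that $\mathfrak G_\inert\to\mathfrak{out}(\mathfrak t_3)$ is an injective morphism of Lie algebras.
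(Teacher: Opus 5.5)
Your unfolding of the definitions (including the shift $E_1\mapsto E_1-\mathrm{ad}_{\mu_{123}(Y)}$, which the paper leaves implicit) and the identification of the projection with $\mathfrak{grt}_1$ via Thm.~\ref{thm:main} follow the paper. In degrees $>1$, your kernel argument (rerunning the specializations of Thm.~\ref{thm:main} with $C=0$) is a valid substitute for the paper's use of \eqref{eq:24marbis}.

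The gap is the parenthetical ``which I may assume supported in degree $>1$'' in the uniqueness step. In Thm.~\ref{thm:main} that reduction was harmless because the degree was dictated by $C$. Here $C=0$ and the class $F$ ranges over all of $\mathfrak{out}(\mathfrak t_4)$. You may reduce to homogeneous $(D,X)$, since the solution set is graded, but you must still handle degrees $0$ and $1$; the paper treats these separately.

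In degree $1$ your steps 4--5 are false. Put $t_{123}:=t_{12}+t_{13}+t_{23}$ and $t_{124}:=t_{12}+t_{14}+t_{24}$, the images of the central element $z_3$ under $\mu_{123}$ and $\mu_{124}$. Then both $(D,X)=(0,t_{124})$ and $(D,X)=(-\mathrm{ad}_{t_{123}},t_{123})$ satisfy $D\mu_{123}=0=(D+\mathrm{ad}_X)\mu_{124}$, so neither $X=0$ nor $D=0$ holds. What one can prove is that $D$ is inner:
\begin{itemize}
\item $[t_{12},X]=0$ together with Lem.~\ref{lem:kertt} forces $X$ into the span of $t_{123},t_{124},t_{123,4},t_{124,3}$;
\item then $D+\mathrm{ad}_{et_{123}+ft_{123,4}}$, with $e,f$ read off from $X$, kills $t_{12},t_{13},t_{23},t_{14},t_{24}$ and the central element $z_4$, hence is zero.
\end{itemize}
Your closing remark that an inner ambiguity is ``harmless'' points exactly at this case, and it needs this computation.

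In degree $0$ you get $X=0$, but your step 3 fails: $D(z_4)$ is only forced into $Z(\mathfrak t_4)=\mathbf kz_4$, not to $0$. This case cannot be repaired. Set $D_0(t_{34}):=z_4$ and $D_0(t_{ij}):=0$ for the five other generators. This preserves all defining relations of $\mathfrak t_4$, since each relation is a bracket $[a,b]$ of linear combinations of generators and $z_4$ is central. So $D_0$ is a nonzero derivation of degree $0$, hence not inner, and it satisfies $D_0\mu_{123}=D_0\mu_{124}=0$. Thus $(0,[D_0])$ is a nonzero element of \eqref{main:int} lying over $0\in\mathfrak G_\inert$, and the statement with $\mathfrak{out}(\mathfrak t_4)$ is not literally true. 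The paper's own case $d=0$ also stops at \eqref{part:concl} without concluding. The correct statement, which is the one actually used in Cor.~\ref{thm:gp:char}, has $\mathfrak{out}_1(\mathfrak t_4)$ in place of $\mathfrak{out}(\mathfrak t_4)$. For that version, your proof becomes complete once you add the degree-$1$ analysis above.
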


Note that this result gives an alternative proof of Lem. \ref{lem:dr}. 

\begin{proof}
Denote by $p: \mathfrak G_\inert\times\mathfrak{out}(\mathfrak t_4)\to\mathfrak G_\inert$
and $p' :  \mathfrak G_\inert\times\mathfrak{der}(\mathfrak t_4)\times \mathfrak t_4\to 
\mathfrak G_\inert\times\mathfrak{out}(\mathfrak t_4)$ the canonical projections. Denote by 
$\mathfrak A$ the space defined by \eqref{main:int}. Then  
\begin{align*}
&\mathfrak A=\{(C,F)|\text{for some }E,E'\in \mathfrak{der}(\mathfrak t_4)\text{ above }F,\text{ one has }
E\mu_{123}=\mu_{123}D_C\text{ and }E'\mu_{124}=\mu_{124}D_C\}
\\ & \subset \mathfrak G_\inert\times\mathfrak{out}(\mathfrak t_4). 
\end{align*}
Denoting by $X\in\mathfrak t_4$ an element such that $E'=E+\mathrm{ad}_X$, one obtains 
\begin{align}\label{eq:24mar}
&\mathfrak A=\{(C,F)|\text{for some }E\in \mathfrak{der}(\mathfrak t_4)\text{ above }F\text{ and some }X\in\mathfrak t_4,
\\ & \nonumber \text{ one has }
E\mu_{123}=\mu_{123}D_C\text{ and }(E+\mathrm{ad}_X)\mu_{124}=\mu_{124}D_C\}. 
\end{align}
Set  
$$
\mathfrak B:=\{(C,E,X)|E\mu_{123}=\mu_{123}D_C\text{ and }(E+\mathrm{ad}_X)\mu_{124}=\mu_{124}D_C\}
\subset \mathfrak G_\inert\times\mathfrak{der}(\mathfrak t_4)\times \mathfrak t_4,  
$$
then \eqref{eq:24mar} implies $\mathfrak A=p'(\mathfrak B)$. It follows 
\begin{equation}\label{p:pp'}
p(\mathfrak A)=pp'(\mathfrak B).     
\end{equation} 
Then 
$$
pp'(\mathfrak B)=\{C|\text{for some }E\in\mathfrak{der}(\mathfrak t_4)\text{ and }X\in\mathfrak t_4,\text{ one has }
E\mu_{123}=\mu_{123}D_C\text{ and }(E+\mathrm{ad}_X)\mu_{124}=\mu_{124}D_C\}, 
$$
which together with Thm. \ref{thm:main} implies $pp'(\mathfrak B)=\mathfrak{grt}_1$. Together with \eqref{p:pp'}, this implies 
$p(\mathfrak A)=\mathfrak{grt}_1$. Therefore $p: \mathfrak G_\inert\times\mathfrak{out}(\mathfrak t_4)\to\mathfrak G_\inert$ induces a 
surjective map $\mathfrak A\to \mathfrak{grt}_1$. 

Let us now prove that the restriction of $p$ to $\mathfrak A$ is injective. The kernel of this restriction is the 
intersection of the kernel of $p$ with $\mathfrak A$, which is therefore equal to 
$$
\mathfrak D:=\{F\in\mathfrak{out}(\mathfrak t_4)|\text{for some }E\in \mathfrak{der}(\mathfrak t_4)\text{ above }
F\text{ and some }X\in\mathfrak t_4,
\nonumber \text{ one has }
E\mu_{123}=(E+\mathrm{ad}_X)\mu_{124}=0\}. 
$$
Set 
$$
\mathfrak C:=\{(E,X)\in  \mathfrak{der}(\mathfrak t_4)\times \mathfrak t_4|E\mu_{123}=(E+\mathrm{ad}_X)\mu_{124}=0\}. 
$$
One checks that 
\begin{equation}\label{eq:proj}
    \mathfrak D=p''(\mathfrak C), 
\end{equation}
where $p'' : \mathfrak{der}(\mathfrak t_4)\times \mathfrak t_4
\to\mathfrak{out}(\mathfrak t_4)$ is the canonical projection. 

Then $\mathfrak C$ is a graded module. Let $(E,X)\in\mathfrak C$. Then 
$E\mu_{123}=0$ implies 
\begin{equation}\label{E:X:1}
    E(t_{12})=E(t_{13})=E(t_{23})=0
\end{equation}
and $(E+\mathrm{ad}_X)\mu_{124}=0$ implies 
\begin{equation}\label{E:X:2}
    E(t_{12})=[t_{12},X],E(t_{14})=[t_{14},X], E(t_{24})=[t_{24},X].
\end{equation}
The resulting equality $[t_{12},X]=0$
then implies
\begin{equation}\label{above:a}
X\in\mathbf kt_{12}+\mathfrak t_3^{12,3,4}. 
\end{equation}
The equality $[t_{14},t_{23}]=0$ implies $[[X,t_{14}],t_{23}]+[t_{14},[X,t_{23}]]=0$, which 
together with $[X,t_{23}]=0$ implies $[t_{23},[t_{14},X]]=0$, which by Lem. \ref{lem:kertt} implies
\begin{equation}\label{above:b}
X\in\mathfrak t_3^{1,4,23}+\mathfrak t_3^{2,3,14}. 
\end{equation}
Similarly, $[t_{13},t_{24}]=0$, $[X,t_{13}]=0$ and the analogue of Lem. \ref{lem:kertt} imply 
\begin{equation}\label{above:c}
X\in\mathfrak t_3^{1,3,24}+\mathfrak t_3^{2,4,13}. 
\end{equation}
Assume that $(E,X)$ is homogeneous, let $d$ be its degree. 

If $d>1$, the relations \eqref{above:a}-\eqref{above:c}
imply the existence of $\lambda,\mu,\nu,\alpha,\beta\in\mathfrak t_3$, such that 
\begin{equation}\label{eq:24marbis}
X=\lambda^{12,3,4}=\mu^{1,4,23}+\nu^{2,3,14}=\alpha^{1,3,24}+\beta^{2,4,13}. 
\end{equation}
Applying $\mathfrak t_4\to \mathfrak t_3$, $x\mapsto x^{1,2,\emptyset,3}$ to the second equality in \eqref{eq:24marbis} implies 
$0=\mu^{1,3,2}=\beta^{2,3,1}$ hence $\alpha=\beta=0$, and applying $x\mapsto x^{1,2,3,\emptyset}$ to the same implies
$0=\nu^{2,3,1}$, hence $\nu=0$, which together with the first equality in \eqref{eq:24marbis} implies $X=0$. 
Then \eqref{E:X:1} and \eqref{E:X:2} imply 
\begin{equation}\label{part:concl}
    \text{$E(t_{ij})=0$ for $ij\in\{12,13,14,23,24\}$. }
\end{equation}Moreover, 
as $z_4=\sum_{i,j:1\leq i<j\leq 4}t_{ij}$ is central in $\mathfrak t_4$, $E(z_4)$
is also central in this Lie algebra, with together with $Z(\mathfrak t_4)=\mathbf kz_4$ and the 
fact that $E$ is homogeneous of degree $>1$ implies $E(z_4)=0$, which together with \eqref{part:concl} implies
$E(t_{34})=0$, therefore $E=0$. It follows that the part of degree $d$ of $\mathfrak C$ is $0$ for any $d>1$. 

If $d=0$, then $X=0$, with together with \eqref{E:X:1} and \eqref{E:X:2} implies \eqref{part:concl}. 

If now $d=1$, then  the relations \eqref{above:a} and \eqref{above:b}
imply the existence of $a,b,c,d',e,f,g\in\mathbf k$, such that 
$$
X=at_{12}+bt_{12,3}+ct_{12,4}+d't_{34}=et_{1,23}+ft_{23,4}+gt_{14}+ht_{2,14}+it_{3,14}+jt_{23}. 
$$
This implies
$$
X=et_{123}+ht_{124}+ft_{123,4}+it_{124,3}, 
$$
where $t_{ijk}:=t_{ij}+t_{ik}+t_{jk}$. Since $t_{124}$ and $t_{124,3}$ commute with $t_{14},t_{24},t_{34}$, 
its follows that  
$$
E(t_{12})=E(t_{13})=E(t_{23})=0,\ E(t_{14})=[t_{14},et_{123}+ft_{123,4}],\ 
E(t_{24})=[t_{24},et_{123}+ft_{123,4}],
$$
which since $t_{123}$ and $t_{123,4}$ commute with $t_{13},t_{23},t_{23}$, implies that 
$E=-\mathrm{ad}_{et_{123}+ft_{123,4}}$, therefore that $E$ is inner. All this implies the inclusion 
$$
\mathfrak C\subset \mathfrak{inn}(\mathfrak t_4)\times\mathfrak t_4. 
$$
Together with \eqref{eq:proj}, it implies $\mathfrak D=0$, therefore that the restriction of $p$ to 
$\mathfrak A$ is injective. This ends the proof that $p$ induces an isomorphism 
$\mathfrak A\to\mathfrak{grt}_1$. 
\end{proof}

\begin{remark}
One can prove analogously to Lem. \ref{lem:24} that the map $C\mapsto [\vec D_C]$, where $\vec D_C$ is as in 
Lem. \ref{def:D:vecC}, is an injection $\mathfrak G_\inert\to\mathfrak{out}(\mathfrak P_{\vec 4})$, and 
analogously to Thm. \ref{thm:la:char} the equality of $\mathfrak{grt}_1$ with the joint intersection 
in $\mathfrak{out}(\mathfrak P_{\vec 4})%\times\mathfrak{out}(\mathfrak P_{\vec 5})
$ of %the Lie subalgebra 
$\mathfrak G_\inert%\times\mathfrak{out}(\mathfrak P_{\vec 5})
$ and of the stabilizer Lie subalgebras 
associated with the morphisms $\mathfrak P_{\vec 4}\to \mathfrak P_{\vec 5}$ given by $x\mapsto x^{1,2,3,45}$
and $x\mapsto x^{1,2,4,35}$ (see \S\ref{sec:2:1}). 
\end{remark}

\begin{corollary}\label{thm:gp:char}
For any commutative ring $\mathbf k$ containing $\mathbb Q$, one has 
\begin{equation}%\label{main:int}
\mathsf{GRT}_1(\mathbf k)=\mathrm{Stab}_{\mathrm{exp}(\hat{\mathfrak G}_\inert\hat\otimes\mathbf k)\times 
\mathrm{Out}_1(\hat{\mathfrak t}_4\hat\otimes\mathbf k)}(\overline\mu_{123},\overline\mu_{124}) , 
\end{equation}
 the right-hand side being the stabilizer group relative to the diagonal of action of the subgroup 
$\mathrm{exp}(\hat{\mathfrak G}_\inert\hat\otimes\mathbf k)\times 
\mathrm{Out}_1(\hat{\mathfrak t}_4\hat\otimes\mathbf k)$ of 
$\mathrm{Out}_1(\hat{\mathfrak t}_3\hat\otimes\mathbf k)\times 
\mathrm{Out}_1(\hat{\mathfrak t}_4\hat\otimes\mathbf k)$ on the Cartesian square of 
$\mathrm{OutHom}(\mathfrak t_3,\mathfrak t_4)(\mathbf k)$. 
% (intersection in the Lie algebra $\mathfrak G_\inert\times \mathfrak{out}(\mathfrak t_4)$), where the notation is as in 
% Lem. \ref{lem:stab:las}, the inclusion 
% $\mathfrak G_\inert\subset\mathfrak{out}(\mathfrak t_3)$ is as in Lem. \ref{lem:24} and 
% %$\mathfrak{stab}_{\mathfrak{out}(\mathfrak t_3)\times \mathfrak{out}(\mathfrak t_4)}(\mu_{123})$ and 
% %$\mathfrak{stab}_{\mathfrak{out}(\mathfrak t_3)\times \mathfrak{out}(\mathfrak t_4)}(\mu_{124})$ are 
% the morphisms $\mu_{123},\mu_{124}:\mathfrak t_3\to\mathfrak t_4$ are as in Thm. \ref{thm:main}. 
\end{corollary}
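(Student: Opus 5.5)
We derive the group statement from the Lie algebra statement Thm. \ref{thm:la:char} by an exponential/logarithm correspondence, following the pattern of Lem. \ref{lem:31}.

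\textbf{Step 1: the group is prounipotent with the right Lie algebra.} Let $H(\mathbf k):=\mathrm{exp}(\hat{\mathfrak G}_\inert\hat\otimes\mathbf k)\times\mathrm{Out}_1(\hat{\mathfrak t}_4\hat\otimes\mathbf k)$. By Lem. \ref{lem:24}, $H(\mathbf k)$ is a subgroup of $\mathrm{Out}_1(\hat{\mathfrak t}_3\hat\otimes\mathbf k)\times\mathrm{Out}_1(\hat{\mathfrak t}_4\hat\otimes\mathbf k)$, and it is a prounipotent group functor with Lie algebra $\mathfrak G_\inert\times\mathfrak{out}_1(\mathfrak t_4)$.

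\textbf{Step 2: reduce the stabilizer to a Lie algebra stabilizer.} The stabilizer of $(\overline\mu_{123},\overline\mu_{124})$ in $H(\mathbf k)$ equals
\[
H(\mathbf k)\cap\mathrm{Stab}(\overline\mu_{123})\cap\mathrm{Stab}(\overline\mu_{124}),
\]
the last two stabilizers being taken in $\mathrm{Out}_1(\hat{\mathfrak t}_3\hat\otimes\mathbf k)\times\mathrm{Out}_1(\hat{\mathfrak t}_4\hat\otimes\mathbf k)$. By Lem. \ref{lem:31}, each of these is prounipotent, with Lie algebra $\mathfrak{stab}_{\mathfrak{out}_1\times\mathfrak{out}_1}(\overline\mu)$. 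An intersection of prounipotent subgroup functors corresponds, through the bijective exponential, to the intersection of their Lie algebras. Since $\mathrm{exp}$ is a bijection for prounipotent groups, $\mathrm{exp}(\mathfrak a\cap\mathfrak b)=\mathrm{exp}\,\mathfrak a\cap\mathrm{exp}\,\mathfrak b$ inside the ambient group. The stabilizer is therefore $\mathrm{exp}$ of
\[
\bigl(\mathfrak{stab}_{\mathfrak G_\inert\times\mathfrak{out}_1(\mathfrak t_4)}(\overline\mu_{123})\cap\mathfrak{stab}_{\mathfrak G_\inert\times\mathfrak{out}_1(\mathfrak t_4)}(\overline\mu_{124})\bigr)\hat\otimes\mathbf k.
\]

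\textbf{Step 3: compare $\mathfrak{out}_1(\mathfrak t_4)$ with $\mathfrak{out}(\mathfrak t_4)$.} Thm. \ref{thm:la:char} is stated with $\mathfrak{out}(\mathfrak t_4)$ rather than $\mathfrak{out}_1(\mathfrak t_4)$. Its proof shows that the intersection $\mathfrak A$ projects isomorphically onto $\mathfrak{grt}_1$. Because $\mathfrak{grt}_1$ lives in degrees $>1$, the corresponding outer classes in $\mathfrak{out}(\mathfrak t_4)$ have representatives of degree $>1$; concretely, the $D$ of Lem. \ref{lem:act:grt} lies in $\mathfrak{der}_1$. Hence the intersection computed with $\mathfrak{out}_1$ is also identified, via $p$, with $\mathfrak{grt}_1$, viewed as a subalgebra of $\mathfrak G_\inert$. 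Injectivity carries over because $\mathfrak{out}_1(\mathfrak t_4)\subset\mathfrak{out}(\mathfrak t_4)$ and the kernel $\mathfrak D$ was shown to vanish. The completed version follows degreewise, since every condition involved is homogeneous and $\mathbf k$-linear.

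\textbf{Step 4: conclude.} Exponentiating gives that the stabilizer equals $\mathrm{exp}(\mathfrak{grt}_1\hat\otimes\mathbf k)$, embedded via $C\mapsto([D_C],[D])$, which is $\mathsf{GRT}_1(\mathbf k)$ by Drinfeld's definition of $\mathsf{GRT}_1$ as the prounipotent group with Lie algebra $\mathfrak{grt}_1$. The group law on $\mathrm{exp}(\hat{\mathfrak G}_\inert)$ comes from $\langle,\rangle$, which matches Lem. \ref{lem:dr}.

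\textbf{Main obstacle.} The delicate point is Step 2: the claim that the stabilizer in the subgroup $H(\mathbf k)$ is exactly the exponential of the Lie algebra stabilizer, so that no group elements appear beyond those coming from Lie algebra elements. I would first handle this as in Lem. \ref{lem:31}: lift to $\mathrm{Aut}_1$, and note that $\beta\mu=\mu\alpha\,\mathrm{Ad}$-twisted is equivalent to a linear condition on logarithms. Some care is needed here, because the inner twist in $\mathfrak t_4$ differs for $\mu_{123}$ and for $\mu_{124}$, corresponding to the element $X$ in Thm. \ref{thm:main}. I would resolve this by applying Lem. \ref{lem:31} separately to each $\mu$ and then intersecting the resulting subgroups, rather than attempting a simultaneous lift.
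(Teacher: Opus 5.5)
Your proposal is correct and follows essentially the same route as the paper. You write the stabilizer of the pair as the intersection of the two single stabilizers, identify each with the exponential of the corresponding Lie stabilizer via Lem.~\ref{lem:31}, and conclude with Thm.~\ref{thm:la:char} with $\mathfrak{out}_1(\mathfrak t_4)$ in place of $\mathfrak{out}(\mathfrak t_4)$, justified by $\mathfrak{grt}_1$ being supported in positive degrees. Your Step 3 spells out this passage from $\mathfrak{out}$ to $\mathfrak{out}_1$, via the degree $>1$ representative $D$ of Lem.~\ref{lem:act:grt} and the injectivity of $p$, somewhat more explicitly than the paper does.
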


\begin{proof}
This follows from the equality of the said right-hand side with the intersection of the groups 
$\mathrm{Stab}_{\mathrm{exp}(\hat{\mathfrak G}_\inert\hat\otimes\mathbf k)\times 
\mathrm{Out}_1(\hat{\mathfrak t}_4\hat\otimes\mathbf k)}(\overline\mu_{123})$ and 
$\mathrm{Stab}_{\mathrm{exp}(\hat{\mathfrak G}_\inert\hat\otimes\mathbf k)\times 
\mathrm{Out}_1(\hat{\mathfrak t}_4\hat\otimes\mathbf k)}(\overline\mu_{124})$, from the equality of 
these groups with $\mathrm{exp}(\mathfrak{stab}_{\mathfrak G_\inert\times \mathfrak{out}_1(\mathfrak t_4)}(\overline\mu_{123})\hat\otimes\mathbf k)$  and 
$\mathrm{exp}(\mathfrak{stab}_{\mathfrak G_\inert\times \mathfrak{out}_1(\mathfrak t_4)}
(\overline\mu_{124})\hat\otimes\mathbf k)$ respectively (see Lem. \ref{lem:31}), 
and from the equality 
 of $\mathfrak{grt}_1$ with  the intersection
$$
\mathfrak{stab}_{\mathfrak G_\inert\times \mathfrak{out}_1(\mathfrak t_4)}(\overline\mu_{123})\cap
\mathfrak{stab}_{\mathfrak G_\inert\times \mathfrak{out}_1(\mathfrak t_4)}(\overline\mu_{124}) , 
$$
which follows from Thm. \ref{thm:la:char} as $\mathfrak{grt}_1$ is supported in positive degrees. 
\end{proof}

\begin{lemma}
The map $\mathfrak G_\inert\to \mathfrak{out}(\mathfrak P_{\vec 4})$ given by $C\mapsto [\vec D_C]$ 
(see Lem. \ref{def:D:vecC})
is injective and defines a Lie algebra morphism  $\mathfrak G_\inert\to \mathfrak{out}_1(\mathfrak t_3)$. 
\end{lemma}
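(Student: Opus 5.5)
The plan is to transport Lem.~\ref{lem:24} along the morphism $\underline\pi_3:\mathfrak P_{\vec4}\to\mathfrak t_3$, using the bijection of Lem.~\ref{lem:corresp} and its compatibility with inner derivations. (The target $\mathfrak{out}_1(\mathfrak t_3)$ in the statement should presumably read $\mathfrak{out}_1(\mathfrak P_{\vec 4})$; I prove it in that form. The $\mathfrak t_3$ version then follows by composing with the isomorphism of the next paragraph.)

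First, $\underline\pi_3$ is graded, surjective and an isomorphism in degrees $>1$ (Lem.~\ref{underlinepi:lem8b:part}). Moreover $Z(\mathfrak P_{\vec4})$ is supported in degree $1$ (Lem.~\ref{lem:descr:center:pn}). So Lem.~\ref{lem:corresp} applies to $\underline\pi_3$ together with a graded section $\sigma$, say $\underline\sigma^0$ of Lem.~\ref{lem:section:A}. This gives a bijection $p:\mathfrak{der}(\mathfrak P_{\vec4})[>1]\to\mathfrak{der}(\mathfrak t_3)[>1]$, $D\mapsto \underline\pi_3 D\sigma$. By Lem.~\ref{lem:corresp}(c), $p$ and its inverse exchange $\mathrm{ad}_a$ and $\mathrm{ad}_{\underline\pi_3(a)}$ in degrees $>1$. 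One checks directly that $p$ is a Lie algebra morphism: $\underline\pi_3 D\sigma\underline\pi_3 D'\sigma=\underline\pi_3 DD'\sigma$, by \eqref{temp3}, i.e. $D\sigma\underline\pi_3=D$. Hence $p$ induces an isomorphism $\mathfrak{out}(\mathfrak P_{\vec4})[>1]\simeq\mathfrak{out}(\mathfrak t_3)[>1]$. Here one uses that an inner derivation of degree $>1$ is $\mathrm{ad}_a$ with $a$ of degree $>1$, and that $\mathrm{ad}_a=0$ when $a$ is central of degree $1$.

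Injectivity then follows from Lem.~\ref{lem:inter:DC}(a), which gives $p(\vec D_C)=D_C$. Suppose $[\vec D_C]=0$. Since $C$ is supported in degrees $>1$, so is $\vec D_C$, and we may write $\vec D_C=\mathrm{ad}_{\vec a}$ with $\vec a$ supported in degrees $>1$, after discarding the degree-$1$ components, which act trivially as argued below. Then $D_C=p(\mathrm{ad}_{\vec a})=\mathrm{ad}_{\underline\pi_3(\vec a)}$ is inner, so $C=0$ by Lem.~\ref{lem:24}.

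Next, the Lie morphism property. The map $C\mapsto\vec D_C$ is the composite of $C\mapsto D_C$, which is a Lie morphism by Lem.~\ref{lem:def:DC}, with the inverse Lie isomorphism $p^{-1}$. So $\vec D_{\langle C,C'\rangle}=[\vec D_C,\vec D_{C'}]$, and passing to classes gives a Lie morphism to $\mathfrak{out}(\mathfrak P_{\vec4})$. Its image lies in $\mathfrak{out}_1$ for degree reasons: $\vec D_C$ raises degree, i.e. it has associated graded $0$.

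The main obstacle I expect is the degree bookkeeping with inner derivations. I must check that a degree-$d$ ($d>1$) derivation which is inner is $\mathrm{ad}_a$ with $a$ homogeneous of degree $d-1\geq1$. The degree-$1$ case is delicate only if $a$ could be noncentral, but then $d=2$ and $p$ still maps $\mathrm{ad}_a$ to $\mathrm{ad}_{\underline\pi_3(a)}$, because $\underline\pi_3$ is a morphism and $\sigma$ is a section. So I will verify Lem.~\ref{lem:corresp}(c) for all $a$ of positive degree directly from $\underline\pi_3\,\mathrm{ad}_a\,\sigma=\mathrm{ad}_{\underline\pi_3 a}$. This suffices for injectivity and does not rely on the stated degree restriction.
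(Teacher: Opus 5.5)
Your proof is correct, but it takes a different route from the paper. The paper only says ``similar to Lem.~\ref{lem:24}'', meaning that argument is redone inside $\mathfrak P_{\vec 4}$:
\begin{itemize}
\item if $\vec D_C=\mathrm{ad}_{\vec X}$ with $\vec X$ in degrees $>1$, then $[\vec X,\vec X_{12}]=0$;
\item a centralizer computation in $\mathfrak P_{\vec 4}$ then forces $\vec X=0$. This can be obtained, as in the proof of Lem.~\ref{lem:Ct12}, by pulling back Ihara's description of $\mathrm C_{\mathfrak P_4}(X_{12})$ along $\vec\pi_3$, whose kernel is central and sits in degree $1$;
\item next, $[C(\vec X_{32},\vec X_{21}),\vec X_{23}]=0$ forces $C=0$;
\item bracket compatibility is checked directly, as in Lem.~\ref{lem:def:DC}.
\end{itemize}

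You instead use Lem.~\ref{lem:24} and Lem.~\ref{lem:def:DC} as black boxes and transport them along $\underline\pi_3$. Your only new inputs are Lem.~\ref{lem:inter:DC}(a), the identity $\underline\pi_3\,\mathrm{ad}_a\,\sigma=\mathrm{ad}_{\underline\pi_3(a)}$, and the fact that $p$ preserves brackets on $\mathfrak{der}(\mathfrak P_{\vec 4})[>1]$ because $D\sigma\underline\pi_3=D$ there.

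What this buys you: no new centralizer computation in $\mathfrak P_{\vec 4}$ is needed. You also get the more structural fact that $p$ induces $\mathfrak{out}(\mathfrak P_{\vec 4})[>1]\simeq\mathfrak{out}(\mathfrak t_3)[>1]$ with $[\vec D_C]\mapsto[D_C]$. This explains why the target $\mathfrak{out}_1(\mathfrak t_3)$ printed in the statement is harmless; you are right that it is a typo for $\mathfrak{out}_1(\mathfrak P_{\vec 4})$. The price is reliance on Lem.~\ref{lem:corresp}, whose printed proof has several slips, although the specific facts you use from it are easy to verify directly.

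One small correction to your last paragraph: $\mathrm{ad}_a$ has the same degree as $a$, not one more. So an inner derivation of degree $d\geq 2$ is $\mathrm{ad}_a$ with $a$ of degree $d$, not $d-1$, and the ``delicate degree-$1$ case'' never arises. In any case, since $\underline\pi_3\,\mathrm{ad}_{\vec a}\,\sigma=\mathrm{ad}_{\underline\pi_3(\vec a)}$ holds for every $\vec a$, you can apply $p$ to $\vec D_C=\mathrm{ad}_{\vec a}$ directly. No discarding of components is needed.
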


\begin{proof}
    Similar to the proof of Lem. \ref{lem:24}. 
\end{proof}

\begin{corollary}\label{thm:gp:char:sphere}
For any commutative ring $\mathbf k$ containing $\mathbb Q$, one has 
\begin{equation}%\label{main:int}
\mathsf{GRT}_1(\mathbf k)=\mathrm{Stab}_{\mathrm{exp}(\hat{\mathfrak G}_\inert\hat\otimes\mathbf k)\times 
\mathrm{Out}_1(\hat{\mathfrak P}_{\vec 5}\hat\otimes\mathbf k)}(\overline{\vec\mu}_{123},\overline{\vec\mu}_{124}) , 
\end{equation}
 the right-hand side being the stabilizer group relative to the diagonal of action of the subgroup 
$\mathrm{exp}(\hat{\mathfrak G}_\inert\hat\otimes\mathbf k)\times 
\mathrm{Out}_1(\mathfrak P_{\vec 5}\hat\otimes\mathbf k)$ of 
$\mathrm{Out}_1(\mathfrak P_{\vec 4}\hat\otimes\mathbf k)\times 
\mathrm{Out}_1(\mathfrak P_{\vec 5}\hat\otimes\mathbf k)$ on the Cartesian square of 
$\mathrm{OutHom}(\mathfrak P_{\vec 4},\mathfrak P_{\vec 5})(\mathbf k)$, 
$\vec\mu_{123},\vec\mu_{124}$ being the Lie algebra morphisms from Thm. \ref{thm:main:2}
and $\overline{\vec\mu}_{123},\overline{\vec\mu}_{124}$ being their classes in 
$\mathrm{OutHom}(\mathfrak P_{\vec 4},\mathfrak P_{\vec 5})(\mathbf k)$.  
\end{corollary}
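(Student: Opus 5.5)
The plan is to follow the proof of Cor. \ref{thm:gp:char}, replacing Thm. \ref{thm:main} by Thm. \ref{thm:main:2}. The identification of $\mathfrak G_\inert$ with a Lie subalgebra of $\mathfrak{out}_1(\mathfrak P_{\vec 4})$ comes from the lemma just proved (the map $C\mapsto[\vec D_C]$).

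\textbf{Step 1: the Lie algebra statement.} First I will prove the analogue of Thm. \ref{thm:la:char}: $\mathfrak{grt}_1$ equals
$$\mathfrak{stab}_{\mathfrak G_\inert\times\mathfrak{out}(\mathfrak P_{\vec5})}(\overline{\vec\mu}_{123})\cap\mathfrak{stab}_{\mathfrak G_\inert\times\mathfrak{out}(\mathfrak P_{\vec5})}(\overline{\vec\mu}_{124}).$$
Call this intersection $\vec{\mathfrak A}$, and let $p$ denote the projection to the factor $\mathfrak G_\inert$.

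\begin{itemize}
\item Exactly as in the proof of Thm. \ref{thm:la:char}, choosing lifts $E,E'$ of the outer class and writing $E'=E+\mathrm{ad}_{\vec X}$ shows that $p(\vec{\mathfrak A})$ is the set $\mathfrak Y$ of Thm. \ref{thm:main:2}. That theorem gives $\mathfrak Y=\mathfrak{grt}_1$.
\item Injectivity of $p|_{\vec{\mathfrak A}}$ requires showing the following. Suppose $(\vec E,\vec X)\in\mathfrak{der}(\mathfrak P_{\vec5})\times\mathfrak P_{\vec5}$ satisfies $\vec E\vec\mu_{123}=0=(\vec E+\mathrm{ad}_{\vec X})\vec\mu_{124}$. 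Then $\vec E$ is inner.
\end{itemize}

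\textbf{Step 2: reduction to $\mathfrak t_4$ by degree.} I will argue degreewise, since this set of pairs is graded.

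\begin{itemize}
\item \emph{Degree $>1$.} Transport $(\vec E,\vec X)$ to $(E,X):=(\underline\pi_4\vec E\sigma,\underline\pi_4\vec X)$ via Lem. \ref{lem:corresp}. The diagrams of Lem. \ref{lem:friday}(b), together with Lem. \ref{lem:LAs} and the surjectivity of $\underline\pi_3$, show that $(E,X)$ lies in the set $\mathfrak C$ from the proof of Thm. \ref{thm:la:char}. Note that $\vec D_C$ is replaced by $0$ here, and $0$ is intertwined with $0$. That proof showed $\mathfrak C$ vanishes in degrees $>1$. Hence $E=0$ and $X=0$, and the bijectivity in Lem. \ref{lem:corresp}(a),(c) gives $\vec E=0$ and $\vec X=0$.
\item \emph{Degree $\le1$.} These degrees need a direct argument. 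They do not matter for the group statement, because we only need the intersection inside $\mathfrak G_\inert\times\mathfrak{out}_1(\mathfrak P_{\vec5})$, whose elements are supported in degree $>1$. I will therefore avoid the low-degree case entirely by working with $\mathfrak{out}_1$ from the start.
\end{itemize}

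\textbf{Step 3: passage to groups.} Lem. \ref{lem:31} identifies each stabilizer group
$$\mathrm{Stab}_{\mathrm{exp}(\hat{\mathfrak G}_\inert\hat\otimes\mathbf k)\times\mathrm{Out}_1(\hat{\mathfrak P}_{\vec5}\hat\otimes\mathbf k)}(\overline{\vec\mu}_{12\bullet})$$
with the exponential of the corresponding stabilizer Lie algebra tensored with $\mathbf k$. To apply it to the subgroup $\mathrm{exp}(\hat{\mathfrak G}_\inert\hat\otimes\mathbf k)$, I take preimages under the injective morphism given by the preceding lemma.

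The stabilizer of the pair is the intersection of the two stabilizers. Since $\mathrm{exp}$ is a bijection compatible with taking intersections of Lie subalgebras, this intersection equals $\mathrm{exp}(\mathfrak{grt}_1\hat\otimes\mathbf k)=\mathsf{GRT}_1(\mathbf k)$.

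\textbf{Main obstacle.} I expect the hard step to be Step 2: checking that the transported pair really lands in $\mathfrak C$, and that no extra freedom arises from the central elements $\vec X_{ii}$. Lem. \ref{lem:descr:center:pn} places these in degree $1$, so in degree $>1$ they are killed by derivations. This is why I restrict to degree $>1$.
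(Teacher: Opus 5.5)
Your overall route is the paper's. You prove the sphere analogue of Thm.~\ref{thm:la:char} from Thm.~\ref{thm:main:2}, then apply Lem.~\ref{lem:31} and intersect the two stabilizers. Transporting the injectivity question to the set $\mathfrak C$ of the proof of Thm.~\ref{thm:la:char}, via $\underline\pi_4$, Lem.~\ref{lem:friday}(b) and Lem.~\ref{lem:LAs}, is a sound way to fill in the paper's ``analogously'' (take $\sigma$ to be a graded section of $\underline\pi_4$). However, your treatment of low degrees leaves a genuine gap.

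\textbf{Where it breaks.} You claim that the elements of $\mathfrak G_\inert\times\mathfrak{out}_1(\mathfrak P_{\vec5})$ are supported in degree $>1$. This is false. $\mathfrak{der}_1$ consists of derivations with vanishing associated graded, i.e.\ of degree $\geq 1$, so $\mathfrak{out}_1(\mathfrak P_{\vec5})$ contains degree-$1$ classes. Passing to $\mathfrak{out}_1$ only removes degree $0$. That removal is genuinely needed: on $\mathfrak t_4$, the derivation $t_{34}\mapsto z_4$, $t_{ij}\mapsto 0$ otherwise, is non-inner and kills both images. Since $\mathfrak G_\inert$ vanishes in degree $1$, the degree-$1$ part of the stabilizer consists of pairs $(0,[\vec E])$, where $\vec E$ has degree $1$ and $\vec E\vec\mu_{123}=0=(\vec E+\mathrm{ad}_{\vec X})\vec\mu_{124}$ for some $\vec X$. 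Unless every such $\vec E$ is inner, the stabilizer Lie algebra, and hence the stabilizer group, is strictly larger than $\mathfrak{grt}_1$ (resp.\ $\mathsf{GRT}_1(\mathbf k)$). So this case must be treated, as the paper does in the case $d=1$ of the proof of Thm.~\ref{thm:la:char}.

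\textbf{How to fix it with your own tools.}
\begin{enumerate}
\item A degree-$1$ derivation $\vec E$ sends the center $\ker\underline\pi_4$, which is concentrated in degree $1$, to central elements of degree $2$. These vanish, so $\vec E$ kills $\ker\underline\pi_4$.
\item Hence $E:=\underline\pi_4\vec E\sigma$ is a derivation of $\mathfrak t_4$ satisfying $E\underline\pi_4=\underline\pi_4\vec E$. Your diagram argument then places $(E,\underline\pi_4\vec X)$ in the degree-$1$ part of $\mathfrak C$.
\item The paper's $d=1$ computation gives $E=-\mathrm{ad}_Y$ for some $Y\in\mathfrak t_4[1]$.
\item For any lift $\vec Y$ of $Y$, the derivation $\vec E+\mathrm{ad}_{\vec Y}$ has degree $1$ and takes values in $\ker\underline\pi_4\subset\mathfrak P_{\vec5}[1]$. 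It therefore vanishes, and $\vec E$ is inner.
\end{enumerate}
With this case added, injectivity holds in all degrees $\geq 1$, and your Step~3 goes through as written.
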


\begin{proof}
    Similar to that of Cor. \ref{thm:gp:char}. 
\end{proof}

%\begin{thebibliography}{BGFr}

\end{document}